\documentclass[12pt]{article}
\pdfoutput=1

\usepackage{amsmath, amsthm, amssymb}
 \usepackage{ifpdf}
\ifpdf
\usepackage[pdftex]{graphicx}
\else
\usepackage[dvips]{graphicx}
\fi
\usepackage{tikz}
 	 \usetikzlibrary{arrows,backgrounds} 
\usepackage[all]{xy}

\IfFileExists{mypreamble.tex}{\input{mypreamble.tex}}{

\usepackage{amsmath,amssymb,amsfonts,amsthm}
\usepackage{ifpdf}
\usepackage{enumerate}
\usepackage{ulem}
\usepackage{leftidx}

\usepackage{breqn}

\usepackage{tikz}
\usetikzlibrary{calc}
\usepackage{tikz-qtree}

\ifpdf
\usepackage[pdftex,plainpages=false,hypertexnames=false,pdfpagelabels]{hyperref}
\else
\usepackage[dvips,plainpages=false,hypertexnames=false]{hyperref}
\fi

\vfuzz2pt 
\hfuzz2pt 


\newcommand{\arxiv}[1]{\href{http://arxiv.org/abs/#1}{\tt arXiv:\nolinkurl{#1}}}

\newcommand{\googlebooks}[1]{(preview at \href{http://books.google.com/books?id=#1}{google books})}

\theoremstyle{plain}
\newtheorem{prop}{Proposition}[section]

\newtheorem{thm}[prop]{Theorem}
\newtheorem{thm*}{Theorem}

\newtheorem{lem}[prop]{Lemma}
\newtheorem{cor}[prop]{Corollary}
\newtheorem*{cor*}{Corollary}
\newtheorem*{clm*}{Claim}
\newtheorem*{conj*}{Conjecture}

\numberwithin{equation}{section}

\theoremstyle{remark}
\newtheorem{ex}[prop]{Example}

\newtheorem{remark}[prop]{Remark}           
\newtheorem*{rem*}{Remark}               
\newtheorem*{ex*}{Example}                

\theoremstyle{definition}
\newtheorem{defn}[prop]{Definition}         
   
\newtheorem{nota}[prop]{Notation}   
\newtheorem*{defn*}{Definition}             

\theoremstyle{plain}

\newcounter{comment}
\newcommand{\noop}[1]{}

\def\clap#1{\hbox to 0pt{\hss#1\hss}}

\newcommand{\Natural}{\mathbb N}

\def\semicolon{;}
\def\applytolist#1{
    \expandafter\def\csname multi#1\endcsname##1{
        \def\multiack{##1}\ifx\multiack\semicolon
            \def\next{\relax}
        \else
            \csname #1\endcsname{##1}
            \def\next{\csname multi#1\endcsname}
        \fi
        \next}
    \csname multi#1\endcsname}

\def\calc#1{\expandafter\def\csname c#1\endcsname{{\mathcal #1}}}
\applytolist{calc}QWERTYUIOPLKJHGFDSAZXCVBNM;
\def\bbc#1{\expandafter\def\csname bb#1\endcsname{{\mathbb #1}}}
\applytolist{bbc}QWERTYUIOPLKJHGFDSAZXCVBNM;
\def\bfc#1{\expandafter\def\csname bf#1\endcsname{{\mathbf #1}}}
\applytolist{bfc}QWERTYUIOPLKJHGFDSAZXCVBNM;

\renewcommand{\imath}{\mathfrak{i}}
\renewcommand{\jmath}{\mathfrak{j}}

\newcommand{\set}[1]{\left\{#1\right\}}

\makeatletter
\newcommand{\hashdef}[2]{\@namedef{#1}{#2}}
\newcommand{\hashlookup}[1]{\@nameuse{#1}}
\makeatother

\IfFileExists{../../graphs/lookup.tex}%
	{\newcommand{\pathtographs}{../../graphs/}}%
	{\newcommand{\pathtographs}{diagrams/graphs/}}

\input{\pathtographs lookup.tex}

\newcommand{\bigraph}[1]{{\hspace{-3pt}\begin{array}{c}%
  \raisebox{-2.5pt}{\includegraphics[height=6mm]{\pathtographs \hashlookup{#1}}}%
\end{array}\hspace{-3pt}}}

\newcommand{\Hom}[3]{\operatorname{Hom_{#1}}\!\!\left(#2 \to #3\right)}


\tikzstyle{shaded}=[fill=red!10!blue!20!gray!30!white]
\tikzstyle{unshaded}=[fill=white]
\tikzstyle{empty box}=[circle, draw, thick, fill=white, opaque, inner sep=2mm]
\tikzstyle{annular}=[scale=.7, inner sep=1mm, baseline]
\tikzstyle{rectangular}=[scale=.75, inner sep=1mm, baseline=-.1cm]

\textwidth   5.5in%
\textheight  9.0in%
\oddsidemargin 12pt%
\evensidemargin 12pt

\topmargin -.6in%
\headsep .5in

\usepackage{xifthen}  
\usepackage{breqn}

\tikzstyle{rectangular}=[scale=.75, inner sep=1mm, baseline=-.1cm]

\usepackage{xcolor}
\definecolor{dark-red}{rgb}{0.7,0.25,0.25}
\definecolor{dark-blue}{rgb}{0.15,0.15,0.55}
\definecolor{medium-blue}{rgb}{0,0,0.65}
\hypersetup{
   colorlinks, linkcolor={purple},
   citecolor={medium-blue}, urlcolor={medium-blue}
}

\DeclareMathOperator{\spann}{span}

\DeclareMathOperator{\trains}{trains}
\DeclareMathOperator{\Tr}{Tr}
\newcommand{\shaded}{
\begin{tikzpicture}[baseline = -.13cm]
	\filldraw[shaded] (0,0) circle (.1cm);
\end{tikzpicture}
}
\newcommand{\unshaded}{
\begin{tikzpicture}[baseline = -.13cm]
	\filldraw[unshaded] (0,0) circle (.1cm);
\end{tikzpicture}
}
\renewcommand{\set}[2]{\left\{#1\middle|#2\right\}}

\newcommand{\Afinite}{A_{\mathit{finite}}}

\newcommand{\quadratic}[3]{
\begin{tikzpicture}[baseline = -.3cm]
	\draw (0,-.8)--(0,0)--(1.6,0)--(1.6,-.8);
	\node at (.8,.15) {{\scriptsize{$#1-1$}}};
	\node at (.4,-.5) {{\scriptsize{$#1+1$}}};
	\node at (2,-.5) {{\scriptsize{$#1+1$}}};
	\draw[thick, unshaded] (0,0) circle (.4);
	\node at (0,0) {$#2$};
	\draw[thick, unshaded] (1.6,0) circle (.4);
	\node at (1.6,0) {$#3$};
	\node at (1.6,.55) {$\star$};
	\node at (0,.55) {$\star$};
\end{tikzpicture}
}
\newcommand{\jellyfish}[1]{
\begin{tikzpicture}[baseline = -.2cm]
	\filldraw[unshaded] (-.7,-.8)--(-.7,0) arc (180:0:.7cm)--(.7,-.8);
	\draw (0,0)--(0,-.8);
	\node at (.3,-.6) {{\scriptsize{$2n$}}};
	\draw[thick, unshaded] (0,0) circle (.4);
	\node at (0,0) {$#1$};
	\node at (-.25,-.5) {$\star$};
\end{tikzpicture}
}
\newcommand{\jellyfishSquared}[1]{
\begin{tikzpicture}[baseline = -.3cm]
	\filldraw[shaded] (-.9,-.8)--(-.9,0) arc (180:0:.9cm)--(.9,-.8);
	\filldraw[unshaded] (-.7,-.8)--(-.7,0) arc (180:0:.7cm)--(.7,-.8);
	\draw (0,0)--(0,-.8);
	\node at (.2,-.6) {{\scriptsize{$2n$}}};
	\draw[thick, unshaded] (0,0) circle (.4);
	\node at (0,0) {$#1$};
	\node at (-.25,-.5) {$\star$};
\end{tikzpicture}
}

\newcommand{\MR}[1]{}

\begin{document}

\title{Principal graph stability and the jellyfish algorithm}
\author{Stephen Bigelow and David Penneys}

\maketitle
\begin{abstract}
We show that if the principal graph of a subfactor planar algebra of modulus $\delta>2$ is stable for two depths, then it must end in $\Afinite$ tails. This result is analogous to Popa's theorem on principal graph stability. We use these theorems to show that an $n-1$ supertransitive subfactor planar algebra has jellyfish generators at depth $n$ if and only if its principal graph is a spoke graph. 
\end{abstract}


\section{Introduction}

Every subfactor planar algebra embeds in the graph planar algebra of its principal graph \cite{MR2812459,tvc}. 
Thus one can construct a subfactor planar algebra by finding candidate generators in the appropriate graph planar algebra, and then showing the planar algebra they generate is a subfactor planar algebra with the correct principal graph. 
Since a graph planar algebra satisfies all the unitarity conditions of a subfactor planar algebra, one must only show the planar subalgebra $P_\bullet$ is \emph{evaluable}, i.e., $\dim(P_{0,\pm})=1$, to get some subfactor planar algebra.  (Additional arguments are needed to verify the principal graph of $P_\bullet$ is the graph with which we started.)

The \emph{jellyfish algorithm} of \cite{0909.4099} is an evaluation algorithm with two main ingredients:
\begin{enumerate}
\item[(1)] Elements in a set of generators $\cS_\pm\subseteq P_{n,\pm}$ satisfy \emph{jellyfish relations}, i.e., diagrams like 
$$
j(\check{S_1})=
\jellyfish{\check{S_1}}
\,,\, 
j^2(S_2)= 
\jellyfishSquared{S_2}\,,
$$
where $\check{S_1}\in \cS_-,S_2\in \cS_+$, can be written as linear combinations of \emph{trains}, which are diagrams where any region meeting the distinguished interval of a generator meets the distinguished interval of the external disk, e.g.,
$$
\begin{tikzpicture}[baseline = -.3cm]
	\draw (-2,-1.2)--(3,-1.2);
	\draw (-1,0)--(-1,-1);
	\draw (0,0)--(0,-1);
	\draw (2,0)--(2,-1);
	\filldraw[thick, unshaded] (-1.5,-.8)--(-1.5,-1.6)--(2.5,-1.6)--(2.5,-.8)--(-1.5,-.8);
	\draw[thick, unshaded] (-1,0) circle (.4);
	\draw[thick, unshaded] (0,0) circle (.4);	
	\draw[thick, unshaded] (2,0) circle (.4);
	\node at (-1.7,-.8) {$\star$};
	\node at (-1.3,.5) {$\star$};	
	\node at (-.3,.5) {$\star$};
	\node at (1.7,.5) {$\star$};
	\node at (1,0) {$\cdots$};
	\node at (.5,-1.2) {$T$};
	\node at (-1,0) {$S_1$};
	\node at (0,0) {$S_2$};
	\node at (2,0) {$S_\ell$};
	\node at (-1.7,-1.4) {{\scriptsize{$k$}}};
	\node at (2.7,-1.4) {{\scriptsize{$k$}}};
	\node at (-1.2,-.6) {{\scriptsize{$2n$}}};
	\node at (-.2,-.6) {{\scriptsize{$2n$}}};
	\node at (1.8,-.6) {{\scriptsize{$2n$}}};
\end{tikzpicture}
$$
where $S_1,\dots, S_\ell\in \cS_\pm$ and $T$ is a single Temperley-Lieb diagram (we suppress the external disk, and the external star goes in the upper left corner).
\item[(2)] The generators in $\cS_\pm$, together with the Jones-Wenzl projection $f^{(n)}$, form an algebra under the usual multiplication 
$$
\begin{tikzpicture}[baseline = .8cm]
	\draw (0,2.2)--(0,-.4);	
	\filldraw[unshaded,thick] (0,1.5) circle (.4cm);
	\node at (0,1.5) {$S_j$};
	\node at (-.55,1.5) {$\star$};
	\filldraw[unshaded, thick] (0,.3) circle (.4cm);
	\node at (0,.3) {$S_i$};
	\node at (-.55,.3) {$\star$};
	\node at (-.2,.9) {{\scriptsize{$n$}}};
	\node at (-.2,-.2) {{\scriptsize{$n$}}};
	\node at (-.2,2) {{\scriptsize{$n$}}};
\end{tikzpicture}
=
\sum_{R}
\lambda_{i,j}^k
\begin{tikzpicture}[baseline = -.1cm]
	\draw (0,.8)--(0,-.8);
	\filldraw[unshaded,thick] (0,0) circle (.4cm);
	\node at (0,0) {$S_k$};
	\node at (-.55,0) {$\star$};
	\node at (-.2,.6) {{\scriptsize{$n$}}};
	\node at (-.2,-.6) {{\scriptsize{$n$}}};
\end{tikzpicture}\,.
$$
\end{enumerate}
Given these two ingredients, one can evaluate any closed diagram in two steps.
\begin{enumerate}
\item[(1)] Pull all generators $S$ to the outside of the diagram using the jellyfish relations, possibly getting diagrams with more $S$'s, and
\item[(2)] Iteratively reduce the number of generators using the algebra property and an inner-most disk argument.
\end{enumerate}

The jellyfish algorithm was first used in \cite{0909.4099} to construct the extended Haagerup subfactor planar algebra with principal graphs
$$
\left(\bigraph{bwd1v1v1v1v1v1v1v1p1v1x0p0x1v1x0p0x1duals1v1v1v1v1x2v2x1}, \bigraph{bwd1v1v1v1v1v1v1v1p1v1x0p1x0duals1v1v1v1v1x2}\right)
$$
(the red markings at the even depths give the dual data), which completed the classification of non-$A_\infty$ subfactors in the index range $(4,3+\sqrt{3})$. They found \emph{$2$-strand jellyfish relations}
$$
j(\check{S})\in\spann(\trains_{5,+}(\{S\})) \text{ and }j^2(S)\in \spann(\trains_{6,+}(\{S\})) 
$$
to evaluate all diagrams that are unshaded on the outside (see Definition \ref{defn:TwoStrandJellyfishRelations} for the relevant notation).

The algorithm was used again in Han's thesis \cite{1102.2052} to give a planar algebra construction of the Izumi-Xu 2221 subfactor planar algebra with principal graphs
$$
\left(\bigraph{bwd1v1v1p1p1v1x0x0p0x1x0duals1v1v2x1}, \bigraph{bwd1v1v1p1p1v1x0x0p0x1x0duals1v1v2x1}\right),
$$
but with simpler \emph{$1$-strand jellyfish relations}:
\begin{align*}
j(\check{S_1}),j(\check{S_2})&\in \spann(\trains_{4,+}(\{S_1,S_2\})) \text{ and } \\
j(S_1),j(S_2)&\in \spann(\trains_{4,-}(\{\check{S_1},\check{S_2}\})).
\end{align*}
(Note that these relations immediately imply relations for $j^2(S_i)$, $i=1,2$).

In recent work \cite{MPspokes}, Morrison and Penneys use the jellyfish algorithm to automate the construction of certain subfactor planar algebras whose principal and dual principal graphs are \emph{spoke graphs}, which are trees with at most one vertex of degree greater than 2 (possibly with some multiple edges near the central vertex. See Definition \ref{defn:spoke}). 
They constructed  a new 4442 spoke subfactor along with a number of known spoke subfactors, including the Izumi-Xu 2221 (automating Han's thesis), the Goodman-de la Harpe-Jones 3311, and the Izumi 3333.
Again, simpler $1$-strand jellyfish relations were found.

Bigelow-Morrison-Peters-Snyder \cite{0909.4099} noticed that $1$-strand jellyfish generators did not exist for the (extended) Haagerup subfactor planar algebra. Morrison and Penneys also noticed their non-existence for all known examples of subfactor planar algebras with annular multiplicities $*10$, i.e., for which the principal graphs $(\Gamma_+,\Gamma_-)$ are a translated extension of 
$$
\left(\bigraph{gbg1v1p1v1x0p0x1},\bigraph{gbg1v1p1v1x0p1x0}\right)
\text{ or }
\left(\bigraph{gbg1v1p1v1x0p1x0},\bigraph{gbg1v1p1v1x0p0x1}\right)
$$
(\emph{translating} a principal graph means attaching an $A_k$ graph to the left, and \emph{extending} means adding additional edges and vertices to the right). 
For more details on annular multiplicities $*10$, see \cite{MR1317352,math/1007.1158,1007.2240}. 

In this paper, we classify exactly which subfactor planar algebras can be constructed using the jellyfish algorithm.

\begin{thm}\label{thm:JellyfishAndSpokes}
A $n-1$ supertransitive subfactor planar algebra can be constructed using jellyfish generators at depth $n$ if and only if its principal graph is a spoke graph. 
We can find 1-strand jellyfish generators if and only if both the principal graph and dual principal graph are spoke graphs.
See Theorems \ref{thm:OneStrand} and \ref{thm:TwoStrand} for more details.
\end{thm}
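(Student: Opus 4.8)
The plan is to reduce both equivalences to a single combinatorial dichotomy --- whether or not the principal graphs branch strictly past depth $n$ --- and then to feed that into the principal graph stability theorem. Throughout, fix an $n-1$ supertransitive subfactor planar algebra $P_\bullet$ of modulus $\delta$: supertransitivity means the principal graphs $\Gamma_\pm$ agree with $A_\infty$ through depth $n-1$, so the first possibly-interesting box space is $P_{n,\pm}$, whose low-weight space is spanned by $\JW{n}$ together with a set $\cS_\pm$ of new generators.

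The first step is to make the translation between the two sides precise. By Definition \ref{defn:TwoStrandJellyfishRelations}, the generators satisfy $k$-strand jellyfish relations exactly when, after capping off their rotations and pushing $k$ strands toward the boundary, the results lie in the span of trains in the depth-$n$ generators; the $1$-strand condition asks for this with $k=1$ on both the shaded and unshaded sides, while the $2$-strand condition asks for it with $k\le 2$ essentially on one side. Using the embedding of $P_\bullet$ into the graph planar algebra of $\Gamma_\pm$ and a count of low-weight vectors, I would show that in either case the spanning condition is equivalent to the statement that $P_\bullet$ has no new low-weight vectors strictly past depth $n$ and that the relevant higher box spaces are exhausted by trains in the depth-$n$ generators. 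On the level of graphs this says precisely that the relevant principal graph has no vertex of valence greater than two strictly past depth $n$, i.e., it is stable for two depths beginning at depth $n$ (for the $1$-strand condition one gets this on both graphs, for the $2$-strand condition only on the principal graph).

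Granting this first step, ``jellyfish $\Rightarrow$ spoke'' is almost immediate: the jellyfish relations force stability for two depths past depth $n$, so the principal graph stability theorem --- which applies since $\delta>2$; the case $\delta=2$, where every candidate graph is ADE or affine ADE and hence already a spoke, is handled separately --- shows the relevant graph ends in $\Afinite$ tails beyond depth $n$, and combined with $n-1$ supertransitivity this forces it to be a spoke graph with central vertex at depth $n$ (Definition \ref{defn:spoke}). For ``spoke $\Rightarrow$ jellyfish'', assume the principal graph (and, for the $1$-strand conclusion, also the dual principal graph) is a spoke graph; past the central vertex it is literally a union of $A$-tails, so $P_\bullet$ has no new low-weight vectors past depth $n$ and the higher box spaces are spanned by trains by the dimension count of the first step --- which is exactly the required jellyfish relation. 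Running the argument on the shaded and unshaded sides simultaneously, when both graphs are spokes, upgrades this to the stronger $1$-strand relations. The remaining rotational-eigenvalue bookkeeping and the precise choice between $j$- and $j^2$-relations are the content of Theorems \ref{thm:OneStrand} and \ref{thm:TwoStrand}.

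The main obstacle is the first step: controlling the span of trains is delicate, since a train may string together arbitrarily many copies of the generators along an arbitrary Temperley-Lieb tangle, so pinning down exactly when $j(S)$ or $j^2(S)$ lies in that span --- and matching this to a clean statement about valences of $\Gamma_\pm$ --- requires careful use of the graph planar algebra embedding together with the inner-most-disk reduction that underlies the jellyfish algorithm. Secondary technical points are the separate treatment of the modulus $\delta=2$ and the verification that the low-weight generators can be chosen to be rotational eigenvectors.
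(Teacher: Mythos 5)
Your overall architecture matches the paper's: translate the jellyfish conditions into the statement that trains from the depth-$n$ box spaces span, identify that with a stability condition on the principal graph, and then invoke the principal graph stability theorems (Popa's Theorem \ref{thm:PrincipalGraphStability} for the $1$-strand case, Theorem \ref{thm:EvenStability} for the $2$-strand case) to pass between ``stable near depth $n$'' and ``spoke graph.'' However, the load-bearing step --- your ``first step'' --- is only asserted, and the form in which you assert it is both partly circular and partly incorrect. You claim the spanning condition is equivalent to ``$P_\bullet$ has no new low-weight vectors strictly past depth $n$ and the higher box spaces are exhausted by trains''; the second clause \emph{is} the spanning condition, and the first clause (an annular-multiplicities statement) is not the right graph-theoretic invariant: vanishing of new low-weight vectors past depth $n$ is neither the same as, nor obviously equivalent to, Popa's stability (which requires no splitting, no merging, \emph{and} simple edges --- a condition not captured by ``no vertex of valence greater than two''). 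The paper's actual bridge has three concrete ingredients you do not supply: (i) Proposition \ref{prop:stable}, identifying graph stability with $P_{n+1,\pm}=P_{n,\pm}+P_{n,\pm}e_{n,\pm}P_{n,\pm}$ via the basic construction and the Bratteli diagram; (ii) Lemma \ref{lem:AlgebraTrain}, showing $\spann(\trains_{k,+}(P_{n,+}))=\langle P_{n,+},TL_{k,+}\rangle$, which is a genuinely nontrivial combinatorial argument using the metric on the tree dual to a Temperley--Lieb diagram; and (iii) Lemmas \ref{lem:TwoStrandTrainsSpan} and \ref{lem:OneStrandTrainsSpan}, the jellyfish algorithm itself, showing that a jellyfish relation on the generators already forces trains to span \emph{all} box spaces. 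Your proposed substitute --- the graph planar algebra embedding plus a low-weight vector count --- is not the paper's route and, as described, does not close this gap. You also omit the one-click rotation (Lemma \ref{lem:FourierTransform}), which is what controls $\Gamma_-$ in the $2$-strand case and makes Theorem \ref{thm:TwoStrand} (and hence the first sentence of the statement, where only $\Gamma_+$ is assumed to be a spoke) work.

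Two smaller points. First, your $\delta=2$ aside is wrong: $D^{(1)}_{n+2}$ is an affine Dynkin diagram that is \emph{not} a spoke graph (it has two triple points), and the paper's proposition on $(D^{(1)}_{n+2},D^{(1)}_{n+2})$ subfactors shows exactly why index $4$ must be excluded rather than ``handled separately'' in the way you suggest; the theorem is to be read with $\delta>2$ as in Theorems \ref{thm:OneStrand} and \ref{thm:TwoStrand}. Second, your ``spoke $\Rightarrow$ jellyfish'' direction implicitly assumes the \emph{full} graph is a spoke, which (once Theorem \ref{thm:equivalent} is in hand) lets you bypass the subalgebra $Q_\bullet$ and the graph-norm argument of Theorem \ref{thm:GraphStability}; that is legitimate for the statement as phrased, but note the paper proves the stronger truncated versions (conditions $(3)$ of Theorems \ref{thm:OneStrand} and \ref{thm:TwoStrand}), for which the stability theorems and Corollary \ref{cor:GraphStability} are genuinely needed.
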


To prove this result, we use techniques from Section 4 of Popa's paper \cite{MR1334479}. Popa calls a (dual) principal graph $\Gamma$ \emph{stable at depth $n$} if $\Gamma$ does not merge or split between depths $n$ and $n+1$, and all edges between depths $n$ and $n+1$ are simple. He proves a remarkable result, which we call \emph{Popa's Principal Graph Stability Theorem}. 
For context, let $P_\bullet$ be a subfactor planar algebra of modulus $\delta$ with principal graphs $(\Gamma_+,\Gamma_-)$, and let $\Gamma_\pm(k)$ denote the truncation of $\Gamma_\pm$ to depth $k$.

\begin{thm}[Popa's Principal Graph Stability Theorem 4.5 of \cite{MR1334479}]\label{thm:PrincipalGraphStability}
If $(\Gamma_+,\Gamma_-)$ is stable at depth $n$, the truncation $\Gamma_\pm(n+1)\neq A_{n+2}$, and $\delta>2$, then $(\Gamma_+,\Gamma_-)$ is stable at depth $k$ for all $k\geq n$, and $\Gamma_+,\Gamma_-$ are finite.
\end{thm}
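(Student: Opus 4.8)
The plan is to induct on $k\ge n$, proving at each stage that $(\Gamma_+,\Gamma_-)$ is stable at depth $k$ (the base case $k=n$ being the hypothesis), and then to deduce finiteness. Fix $\varepsilon\in\{+,-\}$ and write $\Gamma:=\Gamma_\varepsilon$. First I would record the diagrammatic content of stability: the depth-$k$ vertices of $\Gamma$ index the minimal central projections of $P_{k,\varepsilon}$ that are \emph{new}, i.e.\ not in the span of the annular consequences of $\bigcup_{j<k}P_{j,\varepsilon}$, and the new part of $z\tensor\id$, for a new depth-$k$ projection $z$, is a sum of new depth-$(k+1)$ projections whose multiplicities are the edges of $\Gamma$ between those two depths. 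Stability at depth $k$ then says precisely that this operation is an injection of the non-leaf new depth-$k$ projections onto the new depth-$(k+1)$ projections, with every edge simple. The hypothesis $\Gamma_\varepsilon(n+1)\neq A_{n+2}$ is there exactly to exclude $A_\infty$, which is stable at every depth yet infinite.

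\textbf{Propagation of stability.} Assuming stability at depth $k\ge n$ for both graphs, I would, for each new depth-$(k+1)$ projection $q$ with parent $p$ at depth $k$, build an explicit transport partial isometry in $P_{k+1,\varepsilon}$ out of $\JW{k}$, $\JW{k+1}$ and the Temperley--Lieb generators, and check that the subalgebra it generates together with the image of $P_{k,\varepsilon}$ has, one step further out, the Bratteli diagram of a Temperley--Lieb-type basic construction. The induction step is then the comparison of $q\tensor\id$ with this model: one must see that it contains exactly one new projection at depth $k+2$, attached by a simple edge, and that distinct $q$'s have distinct children. The only obstruction is degeneracy of the pairing among the candidate ``new at depth $k+2$'' vectors; but that pairing sits in a small Temperley--Lieb algebra, so its Gram determinant is, up to a positive scalar, a product of quantum integers $\qi{j}$ over a bounded range of $j$, and these are all strictly positive precisely because $\delta>2$. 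Hence no degeneracy, stability passes to depth $k+1$, and iterating gives stability at every depth $k\ge n$. Consequently, past depth $n$ each of $\Gamma_+$ and $\Gamma_-$ is a disjoint union of simple paths hanging off its depth-$n$ vertices.

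\textbf{Finiteness.} It remains to forbid an infinite path, i.e.\ an $A_\infty$ tail. Along such a tail the subfactor dimension vector obeys $d_{j+1}=\delta d_j-d_{j-1}$ and so grows like $\mu^{j}$ with $\mu=\tfrac12\bigl(\delta+\sqrt{\delta^2-4}\bigr)>1$, since $\delta>2$. Here I would follow Section~4 of \cite{MR1334479}: propagate this tail through the basic-construction relations linking $\Gamma_+$ to $\Gamma_-$, using stability of the \emph{other} graph, and show that an $A_\infty$ tail on one side forces the configuration on the other side into a form incompatible with its own stability unless both graphs were $A_\infty$ to begin with --- which $\Gamma_\pm(n+1)\neq A_{n+2}$ rules out. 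So every tail is finite and $\Gamma_+,\Gamma_-$ are finite.

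I expect the propagation step, though intricate, to be essentially routine once it is set up as a Temperley--Lieb Gram-matrix positivity computation --- and it is the only place where $\delta>2$ is used in a transparent way. The real difficulty is the finiteness step: making genuine use of the fact that \emph{both} principal graphs are stable, and controlling how an $A_\infty$ tail interacts with the $\Gamma_+\leftrightarrow\Gamma_-$ duality, is where I would expect the bulk of the work to lie; this is the part we are effectively importing wholesale from Popa.
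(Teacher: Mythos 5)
Your induction step is where the argument breaks down, and the failure is structural rather than technical. Stability at depth $k$ is, by Theorem~\ref{thm:equivalent}, the statement $P_{k+1,\pm}=\langle P_{k,\pm},TL_{k+1,\pm}\rangle$; it says nothing a priori about how $P_{k+2,\pm}$ compares with $\langle P_{k+1,\pm},TL_{k+2,\pm}\rangle$. Your proposed local computation --- transport isometries plus a Temperley--Lieb Gram determinant --- only ever sees the part of $P_{k+2,\pm}$ generated by $P_{k+1,\pm}$ and Temperley--Lieb, so it cannot exclude genuinely new minimal central projections at depth $k+2$ lying outside that subalgebra; but that is exactly the branching that stability must forbid. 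What your argument actually proves is stability of the principal graphs $(\Lambda_+,\Lambda_-)$ of the planar subalgebra $Q_\bullet\subseteq P_\bullet$ generated by $P_{n,\pm}$, and for that no Gram determinant and no hypothesis $\delta>2$ is needed: it is the formal train/jellyfish propagation of Proposition~\ref{prop:JellyfishStability} (equivalently \cite[Proposition 4.2]{MR1334479}). The entire difficulty of the theorem is in showing $Q_\bullet=P_\bullet$, i.e.\ $\Lambda_\pm=\Gamma_\pm$, and this is done in the opposite order from your plan: one first proves $\Lambda_\pm$ is \emph{finite} (ruling out $A_\infty$ tails via Theorem~\ref{thm:PopaStability}, the main theorem of \cite{MR1356624}, imported as a black box), and only then runs a Frobenius--Perron eigenvector comparison (Theorem~\ref{thm:GraphStability} and Corollary~\ref{cor:GraphStability}) in which $\delta>2$ enters through the inequality $\delta\lambda(a_{i+1})<2\lambda(a_i)$ along an $\Afinite$ tail, forcing $\Gamma_\pm$ to agree with $\Lambda_\pm$. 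So $\delta>2$ is not used where you expect it (quantum-integer positivity), and finiteness is not a final afterthought but a prerequisite for the identification $\Gamma_\pm=\Lambda_\pm$.

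On finiteness itself: the recursion $d_{j+1}=\delta d_j-d_{j-1}$ along an $A_\infty$ tail is not by itself contradictory ($A_\infty$ is a perfectly good principal graph for every $\delta>2$), and the exclusion of $A_\infty$ tails is not something one ``propagates through the basic construction''; it is the substantial result of \cite{MR1356624} quoted as Theorem~\ref{thm:PopaStability}, which both this paper and Popa cite rather than reprove. You are entitled to import it, but it must be applied to the stable graphs $\Lambda_\pm$ \emph{before} the comparison with $\Gamma_\pm$, with $\delta>2$ and $\Gamma_\pm(n+1)\neq A_{n+2}$ ruling out the exceptional cases $A_\infty$, $A_{\infty,\infty}$, $D_\infty$.
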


In examining this theorem, we found that trains first appeared in \cite{MR1334479} in the language of $\lambda$-lattices! Using Popa's techniques along with trains and ideas stemming from the jellyfish algorithm, we prove an analogous result only looking at the principal graph, which is a strengthening of (a) of Lemma 4.7 of \cite{MR1334479}. 

\begin{thm}\label{thm:EvenStability}
If $\Gamma_+$ is stable at depths $n$ and $n+1$, the truncation $\Gamma_+(n+1)\neq A_{n+2}$, and $\delta>2$, then $(\Gamma_+,\Gamma_-)$ is stable at depth $k$ for all $k\geq n+1$, and $\Gamma_+,\Gamma_-$ are finite.
\end{thm}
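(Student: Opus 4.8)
The plan is to deduce Theorem~\ref{thm:EvenStability} from Popa's Principal Graph Stability Theorem (Theorem~\ref{thm:PrincipalGraphStability}) by showing that stability of $\Gamma_+$ at the two consecutive depths $n$ and $n+1$ forces the \emph{pair} $(\Gamma_+,\Gamma_-)$ to be stable at depth $n+1$; once we have that, Theorem~\ref{thm:PrincipalGraphStability} applied at depth $n+1$ gives stability at all $k \ge n+1$ and finiteness of both graphs, provided we also check $\Gamma_\pm(n+2) \ne A_{n+3}$. The latter nondegeneracy condition is inherited: if $\Gamma_+(n+1) \ne A_{n+2}$ then in particular $\Gamma_+$ has a vertex of valence $\ge 3$ or a multiple edge at depth $\le n+1$, which persists in any truncation to depth $n+2$, and the hypothesis that $\Gamma_+$ is genuinely stable (no merging/splitting) at depth $n+1$ keeps this from collapsing on the dual side. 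So the entire content is the implication \emph{($\Gamma_+$ stable at depths $n,n+1$) $\Rightarrow$ ($\Gamma_-$ stable at depth $n+1$)}.

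To get stability of $\Gamma_-$ at depth $n+1$, I would work in the planar algebra $P_\bullet$ and use the train/$\lambda$-lattice technology indicated in the excerpt. Stability of $\Gamma_+$ at depths $n$ and $n+1$ means the Bratteli diagram of $P_{n,+} \subset P_{n+1,+} \subset P_{n+2,+}$ is governed entirely by Temperley--Lieb combinatorics (no new low-dimensional summands, all new edges simple), so the relative commutants stabilize in the sense that the inclusion $P_{n,+} \subset P_{n+1,+}$ "looks like" the next inclusion up to the expected Temperley--Lieb growth. The key step is to translate this into a statement about the filtered algebra on the $-$ side: a basis-path count or a dimension count shows that stability on two consecutive even depths pins down $\dim P_{m,-}$ for $m = n, n+1, n+2$ via Frobenius reciprocity / the bimodule structure, which is exactly the assertion that $\Gamma_-$ neither merges, splits, nor acquires a multiple edge between depths $n$ and $n+1$. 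Concretely I would express elements of $P_{n+1,-}$ and $P_{n+2,-}$ as trains over the generators sitting at depth $\le n$ together with a single Temperley--Lieb diagram, using the fact that $\delta > 2$ to run the estimate in Popa's Lemma~4.7 that controls how a would-be new vertex on $\Gamma_-$ at depth $n+1$ would force a corresponding instability of $\Gamma_+$ at depth $n$ or $n+1$ — contradicting the hypothesis.

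The main obstacle, and where the argument has to be done carefully rather than quoted, is precisely this transfer between the two principal graphs: Popa's Theorem~\ref{thm:PrincipalGraphStability} already \emph{assumes} stability of the pair, whereas here we are only given one-sided information, and the extra input we trade for the missing side is one additional depth of stability on $\Gamma_+$. So I expect the heart of the proof to be a lemma of the form "$\Gamma_+$ stable at depths $n$ and $n+1$ $\Rightarrow$ $\Gamma_-$ stable at depth $n+1$," proved by the dimension/train count above, after which Theorem~\ref{thm:PrincipalGraphStability} is a black box. I would also need the standard remark that stability is symmetric under the $\pm$ exchange only up to a shift in depth (since $\Gamma_+$ and $\Gamma_-$ record even and odd relative commutants respectively), so care with indexing — whether "depth $n$" refers to $P_{n,+}$ or to a vertex of $\Gamma_-$ — is essential and is the most likely source of off-by-one errors in writing it up.
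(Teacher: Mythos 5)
Your reduction --- prove that one-sided stability at depths $n$ and $n+1$ forces $\Gamma_-$ to be stable at depth $n+1$, then feed the pair-stability into Theorem \ref{thm:PrincipalGraphStability} as a black box --- is a legitimate skeleton, and it is genuinely different from the paper's route. (The paper instead passes to the planar subalgebra $Q_\bullet$ generated by $P_{n,+}$, shows via Proposition \ref{prop:PopaSubalgebra} that its graphs $(\Lambda_+,\Lambda_-)$ are stable at all higher depths, invokes Theorem \ref{thm:PopaStability} to get finiteness, and then uses a Frobenius--Perron/graph-norm rigidity argument, Theorem \ref{thm:GraphStability} and Corollary \ref{cor:GraphStability}, to force $\Lambda_\pm=\Gamma_\pm$ and hence $Q_\bullet=P_\bullet$; this has the side benefit of reproving Popa's theorem rather than quoting it.) You have also correctly located the crux, and your handling of the nondegeneracy condition is essentially fine: $\Gamma_+(n+2)=A_{n+3}$ would force $\Gamma_+(n+1)=A_{n+2}$, and the condition transfers to $\Gamma_-$ because $\Gamma_\pm(k)=A_{k+1}$ is equivalent to $\dim P_{k,\pm}=\dim TL_{k,\pm}$, which is side-independent.

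The genuine gap is the mechanism you propose for the transfer lemma itself. A ``basis-path count or dimension count \ldots via Frobenius reciprocity'' cannot work: the equality $\dim P_{k,+}=\dim P_{k,-}$ only counts loops at $\star$, and loop counts do not determine whether $\Gamma_-$ merges, splits, or acquires a multiple edge between depths $n$ and $n+1$ (the principal and dual principal graphs of a subfactor can differ substantially while all dimensions agree). Likewise, $\delta>2$ plays no role in the transfer step --- it is needed only for finiteness (no $A_\infty$ tails) and for the eigenvector estimates --- so importing ``the estimate in Popa's Lemma 4.7'' here is aiming at the wrong target. The correct mechanism, and the one piece of new technology the paper isolates for exactly this purpose, is the $1$-click rotation applied to trains: by Theorem \ref{thm:equivalent}, stability of $\Gamma_+$ at depths $n$ and $n+1$ is equivalent to trains from $P_{n,+}$ spanning $P_{n+2,+}$; Lemma \ref{lem:FourierTransform} rotates such a train into a train from $P_{n+1,-}$, so trains from $P_{n+1,-}$ span $P_{n+2,-}$; and Theorem \ref{thm:equivalent} again converts this back into stability of $\Gamma_-$ at depth $n+1$. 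With that lemma substituted for your dimension count, your reduction closes up and in fact yields a shorter proof than the one in the paper; without it, the central implication of your argument is unsupported.
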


Planar algebras are essential to our approach. We use the 1-click rotation (also known as the Fourier Transform), which is natural from a planar algebra viewpoint, in the important Lemma \ref{lem:FourierTransform}.

One of the biggest hurdles in the classification of subfactors to index 5 \cite{1007.1730,1007.2240,1109.3190,1010.3797} were weeds with initial quadruple points. (A \emph{weed} represents an infinite family of potential principal graphs obtained from a fixed subgraph by translating and extending. See \cite{1007.1730} for more details.) Arguments to rule out $\cQ,\cQ'$ in \cite{1109.3190} were case specific; they knew no general techniques for quadruple points to go beyond index 5. The theorems in this paper and \cite{MR1334479} not only simplify eliminating $\cQ,\cQ'$ in \cite{1109.3190} (and $\cB$ in \cite{1007.2240}), but also eliminate all remaining weeds with initial quadruple points up to index $3+\sqrt{5}$, providing more evidence for \cite[Conjecture 2.2]{1205.2742} of Morrison-Peters:
\begin{conj*}
Any subfactor with index in the range $(5,3+\sqrt{5})$ has principal graphs $(A_\infty,A_\infty)$, 
$
\left(
\bigraph{bwd1v1p1v1x1p0x1duals1v1x2},
\bigraph{bwd1v1p1v1x1p0x1duals1v1x2}
\right)
$,
or
$
\left(
\bigraph{bwd1v1v1p1v1x0p0x1p0x1v0x1x0p1x0x1duals1v1v2x1x3},
\bigraph{bwd1v1v1p1v1x0p0x1p0x1v0x1x0p1x0x1duals1v1v2x1x3}
\right)
$.
\end{conj*}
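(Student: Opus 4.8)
The plan is to run the full subfactor enumeration for the index range $(5,3+\sqrt5)$ and then eliminate the resulting candidate principal graph pairs, using the stability theorems of this paper to dispatch the hardest cases. First I would run the ``odometer'' enumeration of principal graph pairs developed for the classification to index $5$ \cite{1007.1730,1007.2240,1109.3190,1010.3797}, now with the graph-norm constraint $\|\Gamma_\pm\|^2\in(5,3+\sqrt5)$. The spherical associativity of the pair $(\Gamma_+,\Gamma_-)$, the Frobenius--Perron eigenvalue condition, and the duality involution together cut the possibilities down to a finite list of \emph{weeds} (translate-and-extend families) and \emph{vines} (finite families). Discarding $A_\infty$ (the first graph of the statement, which occurs at every index $\ge 4$), every remaining candidate is $n-1$ supertransitive and then branches at some initial branch point; the bound $<3+\sqrt5$ severely limits the branch multiplicities there, so each weed has an \emph{initial triple point} or an \emph{initial quadruple point}.

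This is where the present paper does the real work. For a weed with an initial quadruple point I would argue that the norm bound forbids the graph from continuing to branch past the quadruple point: a further branch or a multiple edge would push $\|\Gamma_+\|$ above $\sqrt{3+\sqrt5}$. Hence $\Gamma_+$ is forced to be stable at two consecutive depths $n$ and $n+1$ with $\Gamma_+(n+1)\ne A_{n+2}$, and since $\delta>2$ throughout this range, Theorem~\ref{thm:EvenStability} applies: $(\Gamma_+,\Gamma_-)$ is stable at every larger depth, both graphs are finite, and each ends in $\Afinite$ tails. This collapses the quadruple-point weed to finitely many finite-depth graphs, which I would then test directly against the norm bound and against associativity; the quadruple-point data forces these either to exceed $3+\sqrt5$ or to fail to close up, eliminating the weed. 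This is precisely the mechanism the paper advertises for removing all remaining weeds with initial quadruple points up to index $3+\sqrt5$.

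The triple-point weeds I would rule out with the standard triple-point and annular-multiplicity obstructions (Jones's quadratic-tangle inequality and its refinements), falling back on the same stability mechanism wherever a triple point fails to resolve: an unresolved triple point again forces two depths of stability and hence finiteness via Theorem~\ref{thm:EvenStability} and Popa's Theorem~\ref{thm:PrincipalGraphStability}. After weed elimination only finitely many finite-depth pairs (vines) survive; for each I would apply the cyclotomic and algebraic-integer obstructions on the graph norm (following Calegari--Morrison--Snyder and Asaeda--Yasuda) to rule out all but the two graphs in the statement, and for those two I would \emph{construct} the subfactor planar algebra by embedding candidate generators into the graph planar algebra and verifying evaluability with the jellyfish algorithm. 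Since both surviving pairs are (dual) spoke graphs, Theorem~\ref{thm:JellyfishAndSpokes} guarantees the relevant jellyfish generators exist, so the constructions go through.

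The decisive difficulty is \emph{completeness} of weed elimination beyond the clean quadruple-point case. The stability theorems dispatch a quadruple point uniformly because its branch data leaves no room to avoid two-depth stability; but weeds with delicate initial data---a triple point whose two children both remain live, multiple branch points, or a branch multiplicity sitting right at the threshold---need not immediately force stability, and the triple-point obstructions can be inconclusive exactly there. Controlling all such weeds simultaneously up to $3+\sqrt5$, rather than case by case as in the index-$5$ work, is the crux; I expect it to require sharpening Theorem~\ref{thm:EvenStability} so as to extract finiteness from \emph{single}-depth stability together with a quantitative norm estimate, and this is the step I anticipate will consume most of the effort.
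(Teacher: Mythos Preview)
The statement you are trying to prove is a \emph{conjecture} in the paper, not a theorem; the paper contains no proof of it. The authors explicitly present it as the open Morrison--Peters conjecture and say only that their stability results eliminate all remaining weeds with initial quadruple points, reducing the problem to ``roughly 10 weeds with initial triple points'' which ``are similar to weeds eliminated in \cite{1007.1730,1007.2240}, but \dots more complex.'' So there is no paper proof to compare against, and your proposal is at best a program, not a proof.

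Your outline is accurate on the quadruple-point side---that is exactly what Theorems~\ref{thm:PrincipalGraphStability} and~\ref{thm:EvenStability} buy---but your treatment of the triple-point weeds is where the plan genuinely fails. You write that you would ``rule out'' these weeds with quadratic-tangle and annular-multiplicity obstructions, ``falling back on the same stability mechanism'' otherwise; but the paper says precisely that this has \emph{not} been done, and that the surviving triple-point weeds are harder than anything handled in the index-$5$ classification. Your own final paragraph concedes this, but then proposes to fix it by ``sharpening Theorem~\ref{thm:EvenStability} so as to extract finiteness from \emph{single}-depth stability.'' That is not a small refinement: single-depth stability of $\Gamma_+$ alone cannot force global stability (the Asaeda--Haagerup dual graph, for instance, is stable at depth $6$ but not a spoke), so any such strengthening would need substantial new hypotheses, and none are in sight.

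A smaller issue: your invocation of Theorem~\ref{thm:JellyfishAndSpokes} to \emph{construct} the two surviving subfactors misreads that theorem. It says that \emph{given} an $(n-1)$-supertransitive subfactor planar algebra, jellyfish generators exist at depth $n$ iff the principal graph is a spoke graph. It does not manufacture the planar algebra for you; you still have to exhibit generators in the graph planar algebra and verify the relations. (In any case, the second pair in the conjecture is not a spoke graph---it branches again after the initial branch point---so the theorem would not apply even if it did what you want.)
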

Using our results, Morrison and Penneys have shown that to prove the conjecture of Morrison-Peters, one needs to eliminate roughly 10 weeds with initial triple points. These new weeds are similar to weeds eliminated in \cite{1007.1730,1007.2240}, but they are more complex.

Numerous other applications of our results are given in Section \ref{sec:Applications}. We anticipate that our results will prove strong new obstructions to possible principal graphs. 

\subsection{Outline}

Section \ref{sec:Background} contains the background for this paper.
Subsection \ref{sec:PrincipalGraphs} briefly recalls how to get a rigid, unitary, spherical 2-category $\cG(P_\bullet)$ from a subfactor planar algebra $P_\bullet$ and how to define the principal graphs $(\Gamma_+,\Gamma_-)$ from $\cG(P_\bullet)$. 
Subsection \ref{sec:Popa} gives Popa's definition of stability for planar algebras and principal graphs and shows they are compatible.

In Section \ref{sec:ExtendingPopa}, we go through the proof of Popa's Theorem \ref{thm:PrincipalGraphStability} using planar algebras and trains to prove Theorem \ref{thm:EvenStability}. In Subsection \ref{sec:Trains}, we define trains, and we prove the important Lemma  \ref{lem:FourierTransform}. 
In Subsection \ref{sec:TrainsAndStability}, we show that stability is equivalent to trains spanning. 
Subsection \ref{sec:TrainsAndJellyfish} connects trains and jellyfish, and
Subsection \ref{sec:Proof} contains the proof of Theorem \ref{thm:EvenStability}.

In Section \ref{sec:Applications}, we give a number of applications of our results.  
Subsection \ref{sec:Jellyfish} explains the connection between the jellyfish algorithm and spoke principal graphs, proving Theorem \ref{thm:JellyfishAndSpokes}. 
Afterward, we give a few quick corollaries and a remark which uses the classification of subfactors to index 5 \cite{1007.1730,1007.2240,1109.3190,1010.3797} to classify all simply laced, acyclic principal graphs of subfactors with at most 2 triple points. 
Subsection \ref{sec:QT} gives a simple proof of Jones' quadratic tangles obstruction for annular multiplicities $*10$ subfactor planar algebras. 

\section{Background}\label{sec:Background}

We refer the reader to \cite{math/1007.1158,0909.4099,JonesPANotes} for the definition of a (subfactor) planar algebra. 

\begin{nota}
When we draw planar diagrams, we often suppress the external boundary disk. 
In this case, the external boundary is assumed to be a large rectangle whose distinguished interval contains the upper left corner. 
We draw one string with a number next to it instead of drawing that number of parallel strings. 
We shade the diagrams as much as possible, but if the parity is unknown, we  often cannot know how to shade them. 
Finally, projections are usually drawn as rectangles with the same number of strands emanating from the top and bottom, while other elements may be drawn as circles.
\end{nota}

\subsection{$2$-categories and fusion graphs}\label{sec:PrincipalGraphs}

We recall how to get a rigid, unitary, spherical 2-category $\cG(P_\bullet)$ from a subfactor planar algebra $P_\bullet$ and how to define the principal graphs $(\Gamma_+,\Gamma_-)$ from $\cG(P_\bullet)$ (see also Section 4.1 of \cite{MR2559686}). 

\begin{defn}
The \emph{paragroup} $\cG(P_\bullet)$ of $P_\bullet$, a rigid, unitary, spherical $2$-category, is defined as follows.

The \emph{objects} of $\cG(P_\bullet)$ are the symbols $\unshaded$ and $\shaded$.

The \emph{$1$-morphisms} of $\cG(P_\bullet)$ are the projections of $P_\bullet$.
\begin{align*}
\Hom{\cG(P_\bullet)}{\unshaded}{\unshaded}&=\set{p\in P_{i,+}}{p \text{ is a projection and } i \text{ is even}},\\
\Hom{\cG(P_\bullet)}{\unshaded}{\shaded}&=\set{p\in P_{i,+}}{p \text{ is a projection and } i \text{ is odd}},\\
\Hom{\cG(P_\bullet)}{\shaded}{\unshaded}&=\set{p\in P_{i,-}}{p \text{ is a projection and } i \text{ is odd}},\text{ and}\\
\Hom{\cG(P_\bullet)}{\shaded}{\shaded}&=\set{p\in P_{i,-}}{p \text{ is a projection and } i \text{ is even}}.
\end{align*}
The identity $1$-morphisms are the empty diagrams. Composition of $1$-morphisms, denoted $\otimes$, is given by horizontal concatenation; e.g., if $p\in\Hom{\cG(P_\bullet)}{\unshaded}{\shaded}$ and $q\in \Hom{\cG(P_\bullet)}{\shaded}{\unshaded}$, then 
$$
\begin{tikzpicture}[baseline = -.1cm]
	\fill[shaded] (0,.8)--(0,-.8)--(.8,-.8)--(.8,.8);
	\draw (0,.8)--(0,-.8);
	\filldraw[unshaded,thick] (-.6,.4)--(.6,.4)--(.6,-.4)--(-.6,-.4)--(-.6,.4);
	\node at (0,0) {$p\otimes q$};
\end{tikzpicture}
=
\begin{tikzpicture}[baseline = -.1cm]
	\fill[shaded] (0,.8)--(0,-.8)--(.7,-.8)--(.7,.8);
	\draw (0,.8)--(0,-.8);
	\filldraw[unshaded,thick] (-.4,.4)--(.4,.4)--(.4,-.4)--(-.4,-.4)--(-.4,.4);
	\node at (0,0) {$p$};
\end{tikzpicture}
\otimes 
\begin{tikzpicture}[baseline = -.1cm]
	\fill[shaded] (-.7,.8)--(-.7,-.8)--(0,-.8)--(0,.8);
	\draw (0,.8)--(0,-.8);
	\filldraw[unshaded,thick] (-.4,.4)--(.4,.4)--(.4,-.4)--(-.4,-.4)--(-.4,.4);
	\node at (0,0) {$q$};
\end{tikzpicture}
=
\begin{tikzpicture}[baseline = -.1cm]
	\fill[shaded] (0,.8)--(0,-.8)--(1.2,-.8)--(1.2,.8);
	\draw (0,.8)--(0,-.8);
	\draw (1.2,.8)--(1.2,-.8);
	\filldraw[unshaded,thick] (-.4,.4)--(.4,.4)--(.4,-.4)--(-.4,-.4)--(-.4,.4);
	\filldraw[unshaded,thick] (.8,.4)--(1.6,.4)--(1.6,-.4)--(.8,-.4)--(.8,.4);
	\node at (0,0) {$p$};
	\node at (1.2,0) {$q$};
\end{tikzpicture}\,.
$$
A $1$-morphism $p$ is called \emph{simple} if $\dim(\Hom{\cG(P_\bullet)}{p}{p})=1$.

The \emph{$2$-morphisms} of $\cG$ are as follows.
If $p_1\in P_{i,\pm}$ and $p_2\in P_{j,\pm}$
then $\Hom{\cG(P_\bullet)}{p_1}{p_2}$ is $p_2 P_{j\to i}{p_1}$,
where $P_{j\to i}$ is $P_{i+j}$
with $j$ strings on the bottom and $i$ strings on the top.
Note that $\Hom{\cG(P_\bullet)}{p_1}{p_2}=(0)$
if $i$ and $j$ do not have the same parity.
$$
\begin{tikzpicture}[baseline = 0cm]
	\clip (-.8,-2)--(-.8,2)--(.8,2)--(.8,-2);
	\draw (0,2)--(0,-2);	
	\node at (-.2,1.8) {{\scriptsize{$j$}}};
	\node at (-.2,.6) {{\scriptsize{$j$}}};
	\node at (-.2,-.6) {{\scriptsize{$i$}}};
	\node at (-.2,-1.8) {{\scriptsize{$i$}}};
	\filldraw[unshaded,thick] (-.4,.8)--(.4,.8)--(.4,1.6)--(-.4,1.6)--(-.4,.8);
	\node at (0,1.2) {$p_2$};
	\filldraw[unshaded,thick] (0,0) circle (.4cm);
	\node at (0,0) {$?$};
	\node at (-.55,0) {$\star$};
	\filldraw[unshaded,thick] (-.4,-.8)--(.4,-.8)--(.4,-1.6)--(-.4,-1.6)--(-.4,-.8);
	\node at (0,-1.2) {$p_1$};
\end{tikzpicture}
\in \Hom{\cG(P_\bullet)}{p_1}{p_2}.
$$
The two types of composition of $2$-morphisms are given by vertical and horizontal concatenation of diagrams. If we have $x\in\Hom{\cG(P_\bullet)}{p_1}{p_2}$ and $y\in \Hom{\cG(P_\bullet)}{p_2}{p_3}$, then the vertical multiplication $xy$ is given by
$$
\begin{tikzpicture}[baseline = -.1cm]
	\draw (0,.7)--(0,-.7);
	\filldraw[unshaded,thick] (-.4,.4)--(.4,.4)--(.4,-.4)--(-.4,-.4)--(-.4,.4);
	\node at (0,0) {$xy$};
\end{tikzpicture}
=
\begin{tikzpicture}[baseline = .8cm]
	\draw (0,2.4)--(0,-.6);	
	\filldraw[unshaded,thick] (-.4,1.2)--(.4,1.2)--(.4,2)--(-.4,2)--(-.4,1.2);
	\node at (0,1.6) {$y$};
	\filldraw[unshaded,thick] (-.4,-.2)--(.4,.-.2)--(.4,.6)--(-.4,.6)--(-.4,-.2);
	\node at (0,.2) {$x$};
\end{tikzpicture}\,.
$$
If $x\in\Hom{\cG(P_\bullet)}{p_1}{p_2}$ and $y\in \Hom{\cG(P_\bullet)}{p_3}{p_4}$ and $p_1,p_2$ are composable with $p_3,p_4$ respectively, then the horizontal multiplication $x\otimes y$ is given by
$$
\begin{tikzpicture}[baseline = -.1cm]
	\draw (0,.7)--(0,-.7);
	\filldraw[unshaded,thick] (-.6,.4)--(.6,.4)--(.6,-.4)--(-.6,-.4)--(-.6,.4);
	\node at (0,0) {$x\otimes y$};
\end{tikzpicture}
=
\begin{tikzpicture}[baseline = -.1cm]
	\draw (0,.7)--(0,-.7);
	\filldraw[unshaded,thick] (-.4,.4)--(.4,.4)--(.4,-.4)--(-.4,-.4)--(-.4,.4);
	\node at (0,0) {$x$};
\end{tikzpicture}
\otimes
\begin{tikzpicture}[baseline = -.1cm]
	\draw (0,.7)--(0,-.7);
	\filldraw[unshaded,thick] (-.4,.4)--(.4,.4)--(.4,-.4)--(-.4,-.4)--(-.4,.4);
	\node at (0,0) {$y$};
\end{tikzpicture}
=
\begin{tikzpicture}[baseline = -.1cm]
	\draw (0,.7)--(0,-.7);
	\draw (1.2,.7)--(1.2,-.7);
	\filldraw[unshaded,thick] (-.4,.4)--(.4,.4)--(.4,-.4)--(-.4,-.4)--(-.4,.4);
	\filldraw[unshaded,thick] (.8,.4)--(1.6,.4)--(1.6,-.4)--(.8,-.4)--(.8,.4);
	\node at (0,0) {$x$};
	\node at (1.2,0) {$y$};
\end{tikzpicture}\,.
$$

The \emph{adjoint} operation in $\cG(P_\bullet)$ is
the identity on objects and $1$-morphisms.
The adjoint of a $2$-morphism
is the same as the adjoint operation in the planar algebra $P_\bullet$.
If $x\in \Hom{\cG(P_\bullet)}{p_1}{p_2}$,
where $p_1\in P_{i,\pm}$ and $p_2\in P_{j,\pm}$,
then we can consider $x$ as an element of $P_{i+j,\pm}$,
take the adjoint,
and consider the result $x^*$ as an element of
$\Hom{\cG(P_\bullet)}{p_2}{p_1}$.

The \emph{duality} operation on $\cG(P_\bullet)$ is the identity on all objects.
On $1$-morphisms and $2$-morphisms, duality is rotation by $\pi$.
$$
\begin{tikzpicture}[baseline = -.1cm]
	\clip (-.9,.8)--(-.9,-.9)--(.9,-.9)--(.9,.9);
	\fill[shaded] (-.9,.8)--(-.9,-.9)--(.8,-.9)--(.8,.8);
	\filldraw[unshaded,thick] (-.4,.4)--(.4,.4)--(.4,-.4)--(-.4,-.4)--(-.4,.4);
	\filldraw[unshaded] (0,.4) arc (180:0:.35cm)--(.7,-1)--(1,-1)--(1,1)--(-.7,1)--(-.7,-.4) arc (180:360:.35cm);
	\filldraw[unshaded,thick] (-.4,.4)--(.4,.4)--(.4,-.4)--(-.4,-.4)--(-.4,.4);
	\node at (0,0) {$p$};
\end{tikzpicture}
=
\begin{tikzpicture}[baseline = -.1cm]
	\fill[shaded] (0,.8)--(0,-.8)--(-.9,-.8)--(-.9,.8);
	\draw (0,.8)--(0,-.8);
	\filldraw[unshaded,thick] (-.4,.4)--(.4,.4)--(.4,-.4)--(-.4,-.4)--(-.4,.4);
	\node at (0,0) {$\overline{p}$};
\end{tikzpicture}\,.
$$
\end{defn}

\begin{defn}
The \emph{principal graph} $\Gamma_+$ of $P_\bullet$ is defined as follows.
The even vertices of $\Gamma_+$ are the isomorphism classes of
simple $1$-morphisms in $\Hom{}{\unshaded}{\unshaded}$.
The odd vertices of $\Gamma_+$ are the isomorphism classes of
simple $1$-morphisms in $\Hom{}{\unshaded}{\shaded}$.
The number of edges between 
vertices corresponding to simple projections
$p\in\Hom{}{\unshaded}{\unshaded}$ and $q\in\Hom{}{\unshaded}{\shaded}$,
is
$$
\dim\left(
\Hom{\cG(P_\bullet)}
{
\begin{tikzpicture}[baseline = -.1cm]
	\fill[shaded] (.8,.8)--(.8,-.8)--(.6,-.8)--(.6,.8);
	\draw (0,.8)--(0,-.8);
	\draw (.6,.8)--(.6,-.8);
	\node at (.2,.6) {{\scriptsize{$n$}}};
	\node at (.2,-.6) {{\scriptsize{$n$}}};
	\filldraw[unshaded,thick] (-.4,.4)--(.4,.4)--(.4,-.4)--(-.4,-.4)--(-.4,.4);
	\node at (0,0) {$p$};
\end{tikzpicture}
}{
\begin{tikzpicture}[baseline = -.1cm]
	\fill[shaded] (0,.8)--(0,-.8)--(.8,-.8)--(.8,.8);
	\draw (0,.8)--(0,-.8);
	\node at (.4,.6) {{\scriptsize{$n+1$}}};
	\node at (.4,-.6) {{\scriptsize{$n+1$}}};
	\filldraw[unshaded,thick] (-.4,.4)--(.4,.4)--(.4,-.4)--(-.4,-.4)--(-.4,.4);
	\node at (0,0) {$q$};
\end{tikzpicture}
}\right).
$$ 

The \emph{basepoint} $\star$ of $\Gamma_+$
is the vertex corresponding to the unshaded empty diagram.
The \emph{depth} of a vertex of $\Gamma_+$ is its distance from $\star$.
This is equal to the minimum $n$ such that the vertex is
the equivalence class of a projection $p \in P_{n,+}$.

The \emph{dual principal graph} $\Gamma_-$
is defined in exactly the same way as $\Gamma_+$,
but reversing the roles of
$\shaded$ and $\unshaded$.
The \emph{basepoint} $\star$ of $\Gamma_-$
is the vertex corresponding to the shaded empty diagram.

Our graphs are always drawn with the basepoint $\star$ at the left.
\end{defn}

\begin{remark}
The ``plus or minus" symbol $\pm$ is meant to be read respectively throughout an entire statement.  
\end{remark}

\begin{remark}\label{rem:SimplyLaced}
If $\Gamma_\pm$ is simply laced, and $p\in P_{n,\pm}$ is a minimal projection such that the vertex $[p]$ has depth $n$, then we identify
 $[p]$ with $p$.
 \end{remark}

\begin{defn}\label{defn:BratteliDiagram}
Alternatively, from an operator algebras viewpoint, we can define the (dual) principal graph as the principal part of the Bratteli diagram of the tower of finite dimensional von Neumann algebras $P_\pm=(P_{n,\pm})$, where $P_{n,\pm}$ includes into $P_{n+1,\pm}$ unitally via the right inclusion
$$
\begin{tikzpicture}[baseline = -.1cm]
	\draw (0,.8)--(0,-.8);
	\node at (-.2,.6) {{\scriptsize{$n$}}};
	\node at (-.2,-.6) {{\scriptsize{$n$}}};
	\filldraw[unshaded,thick] (-.4,.4)--(.4,.4)--(.4,-.4)--(-.4,-.4)--(-.4,.4);
	\draw (.6,.8)--(.6,-.8);
\end{tikzpicture}\,.
$$
If $z_{n+1,\pm}$ is the central support of the Jones projection
$$
e_{n,\pm}=
\delta^{-1}\,\,
\begin{tikzpicture}[baseline = -.1cm]
	\draw (0,.6)--(0,-.6);
	\node at (.2,0) {{\scriptsize{$n$}}};
	\draw (.2,.6) arc(180:360:.2cm);
	\draw (.2,-.6) arc(180:0:.2cm);
\end{tikzpicture}\in P_{n+1,\pm},
$$
then for each $n\in\Natural$, 
$$
z_{n+1,\pm}P_{n-1,\pm}\subset z_{n+1,\pm}P_{n,\pm}\subset z_{n+1,\pm}P_{n+1,\pm}
$$ 
is the Jones basic construction of finite dimensional von Neumann algebras \cite{MR696688,MR999799}. Hence the Bratteli diagram of $P_\pm$ between depths $n$ and $n+1$ consists of the reflection of the Bratteli diagram between depths $n-1$ and depth $n$, which is referred to as the ``old part," and a ``new part," which can be identified with the Bratteli diagram of the inclusion 
$$
(1-z_{n+1,\pm})P_{n,\pm}\subset (1-z_{n+1,\pm})P_{n+1,\pm}.
$$ 
The principal graph is formed from only the ``new parts." See  \cite{MR999799} for more details.
\end{defn}
 
\subsection{Popa's Stability Criterion}\label{sec:Popa}

In \cite[Section 4]{MR1334479}, Popa gives a stability criterion
for $\lambda$-lattices that has very strong consequences.
We define the criterion,
summarize the proof,
and list some consequences.

Let $P_\bullet$ be a subfactor planar algebra,
let $P_\pm=(P_{n,\pm})$ be the respective towers of algebras,
and let $(\Gamma_+,\Gamma_-)$ be the principal and dual principal graphs.
Let $TL_\bullet\subset P_\bullet$ be the Temperley-Lieb planar subalgebra.

\begin{defn}
The (dual) principal graph $\Gamma_\pm$ of $P_\pm$ is said to be
\emph{stable at depth $n$} if
every vertex at depth $n$ connects to at most one vertex at depth $n+1$,
no two vertices at depth $n$ connect to the same vertex at depth $n+1$,
and all edges between depths $n$ and $n+1$ are simple.
We say $(\Gamma_+,\Gamma_-)$  is \emph{stable at depth $n$} if both $\Gamma_+$ and $\Gamma_-$ are stable at depth $n$.
\end{defn}

\begin{defn}[Popa's Stability Criterion]
We say $P_+$ is \emph{stable at depth $n$} if and only if
$$
P_{n+1,+} = P_{n,+} + P_{n,+} e_{n,+} P_{n,+},
$$
where we identify $P_{n,\pm}$ with its image in $P_{n+1,\pm}$ under the right inclusion (see Definition \ref{defn:BratteliDiagram}).
We say $P_\bullet$ is \emph{stable at depth $n$}
if both $P_+$ and $P_-$ are stable at depth $n$.
\end{defn}

\begin{remark}
We remark that $P_{n,+} + P_{n,+} e_{n,+} P_{n,+}$
is the set of linear combination of diagrams of the form
$$
\begin{tikzpicture}[baseline = -.1cm]
	\draw (0,.8)--(0,-.8);
	\node at (-.2,.6) {{\scriptsize{$n$}}};
	\node at (-.2,-.6) {{\scriptsize{$n$}}};
	\filldraw[unshaded,thick] (-.4,.4)--(.4,.4)--(.4,-.4)--(-.4,-.4)--(-.4,.4);
	\node at (0,0) {$x$};
	\draw (.6,.8)--(.6,-.8);
\end{tikzpicture}
\,+
\begin{tikzpicture}[baseline = -.7cm]
	\draw (0,1)--(0,-2);
	\node at (-.4,.8) {{\scriptsize{$n+1$}}};
	\node at (-.4,-1.8) {{\scriptsize{$n+1$}}};
	\node at (-.2,-.5) {{\scriptsize{$n$}}};
	\filldraw[unshaded,thick] (-.4,.6)--(.4,.6)--(.4,-.2)--(-.4,-.2)--(-.4,.6);
	\node at (0,.2) {$y$};
	\filldraw[unshaded,thick] (-.4,-.8)--(.4,-.8)--(.4,-1.6)--(-.4,-1.6)--(-.4,-.8);
	\node at (0,-1.2) {$z$};
	\draw (.2,-.2) arc (180:360:.2cm) -- (.6,1);
	\draw (.2,-.8) arc (180:0:.2cm) -- (.6,-2);
\end{tikzpicture}
$$
where $x,y,z\in P_{n,\pm}$. We say $P_\bullet$ is \emph{stable at depth $n$} if both $P_+$ and $P_-$ are stable at depth $n$.
\end{remark}

\begin{lem}\label{lem:AngledBrackets}
When we identify $P_{n,\pm}$ with its image in $P_{n+1,\pm}$ by adding one vertical string to the right,
$$
P_{n,\pm} + P_{n,\pm} e_{n,\pm} P_{n,\pm} = \langle P_{n,\pm}, TL_{n+1,\pm} \rangle,
$$
where the angled brackets denote the algebra generated by $P_{n,\pm}$ and $TL_{n+1,\pm}$ under the usual multiplication.
\end{lem}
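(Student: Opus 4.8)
The plan is to introduce the shorthand $B := P_{n,\pm} + P_{n,\pm} e_{n,\pm} P_{n,\pm}$ and $A := \langle P_{n,\pm}, TL_{n+1,\pm} \rangle$, and to prove $A = B$ by establishing the two inclusions. One direction is immediate: $e_{n,\pm}$ is a Temperley--Lieb diagram, so it lies in $TL_{n+1,\pm} \subseteq A$, and since $A$ is an algebra containing $P_{n,\pm}$ we get $B \subseteq A$.

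For $A \subseteq B$ I would show that $B$ is a unital subalgebra of $P_{n+1,\pm}$ containing $TL_{n+1,\pm}$; then $A \subseteq B$ is automatic, as $A$ is by definition the smallest algebra containing $P_{n,\pm}$ and $TL_{n+1,\pm}$. It is clear that $B$ is a linear subspace containing the identity (the empty diagram, sitting in the first summand, using that the right inclusion is unital) and containing $e_{n,\pm}$ (the middle summand with both outer factors equal to $1$). The only real point is closure under multiplication: expanding $(a + b\, e_{n,\pm}\, c)(a' + b'\, e_{n,\pm}\, c')$ with all of $a,b,c,a',b',c' \in P_{n,\pm}$, every resulting term manifestly lies in $P_{n,\pm} + P_{n,\pm} e_{n,\pm} P_{n,\pm}$ except $b\, e_{n,\pm}\, (cb')\, e_{n,\pm}\, c'$, so it suffices to know that $e_{n,\pm}\, y\, e_{n,\pm} \in P_{n-1,\pm}\, e_{n,\pm}$ for every $y \in P_{n,\pm}$ (with $P_{n-1,\pm} \subseteq P_{n,\pm}$ via the right inclusion). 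This is exactly the defining relation $e_{n,\pm}\, y\, e_{n,\pm} = E_{n-1}(y)\, e_{n,\pm}$ of the Jones projection in the basic-construction setup of Definition \ref{defn:BratteliDiagram}, and diagrammatically it amounts to capping off the rightmost through-strand of $y$. Granting it, $b\, e_{n,\pm}\, (cb')\, e_{n,\pm}\, c' \in P_{n,\pm}\, e_{n,\pm}\, P_{n,\pm} \subseteq B$, so $B\cdot B \subseteq B$.

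It remains to check $TL_{n+1,\pm} \subseteq B$. Since $TL_\bullet \subseteq P_\bullet$ is a planar subalgebra, the embedding of $TL_{n,\pm}$ into $TL_{n+1,\pm}$ by a strand on the right agrees with $TL_{n,\pm} \subseteq P_{n,\pm} \subseteq P_{n+1,\pm}$, so $B$ contains $TL_{n,\pm}$ as well as $e_{n,\pm}$. As $TL_{m,\pm}$ is generated as an algebra by its Jones projections $e_{1,\pm},\dots,e_{m-1,\pm}$ and $e_{1,\pm},\dots,e_{n-1,\pm}$ already lie in $TL_{n,\pm}$, the algebra $TL_{n+1,\pm}$ is generated by $TL_{n,\pm}$ together with the single projection $e_{n,\pm}$; hence $TL_{n+1,\pm} \subseteq B$. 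Therefore $B$ is an algebra containing both $P_{n,\pm}$ and $TL_{n+1,\pm}$, giving $A \subseteq B$, and so $A = B$. The only step demanding genuine care — indeed the sole place the defining relations of $e_{n,\pm}$ are invoked — is the identity $e_{n,\pm}\, y\, e_{n,\pm} \in P_{n-1,\pm}\, e_{n,\pm}$, which I would verify against the paper's conventions for the right inclusion; everything else is purely formal.
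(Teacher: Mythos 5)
Your proposal is correct and follows essentially the same route as the paper: both reduce $\langle P_{n,\pm}, TL_{n+1,\pm}\rangle$ to the algebra generated by $P_{n,\pm}$ and the single extra generator $e_{n,\pm}$, and both rest on the relation $e_{n,\pm}\, x\, e_{n,\pm} = E_{P_{n-1,\pm}}(x)\, e_{n,\pm}$ to collapse words with two occurrences of $e_{n,\pm}$. Your packaging of this as "$B$ is closed under multiplication" is just a more explicit rendering of the paper's "reduce any word until it has at most one occurrence of $e_{n,\pm}$."
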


\begin{proof}
Let $e_{1,\pm},\dots,e_{n,\pm}$ be
the standard algebra generators of $TL_{n+1,\pm}$.
All of these lie in $P_{n,\pm}$ except for $e_{n,\pm}$,
so
$$
\langle P_{n,\pm},TL_{n+1,\pm} \rangle = \langle P_{n,\pm},e_{n,\pm} \rangle.
$$
For any $x \in P_{n,\pm}$,
we have $e_{n,\pm} x e_{n,\pm} = E_{P_{n-1,\pm}}(x) e_{n,\pm}$,
where $E_{P_{n-1,\pm}}(x)$ is the conditional expectation (partial trace) of $x$.
We can use this to reduce any word in $P_{n,\pm}$ and $e_{n,\pm}$
until it has at most one occurrence of $e_{n,\pm}$.
\end{proof}

The following is \cite[Proposition 4.3, Corollary 4.4]{MR1334479}.
We include a short proof for the reader's convenience.

\begin{prop}[Popa]\label{prop:stable}
The following are equivalent:
\begin{enumerate}
\item[(1)] $P_\pm$ is stable at depth $n$.
\item[(2)] $\Gamma_\pm$ is stable at depth $n$.
\end{enumerate}
\end{prop}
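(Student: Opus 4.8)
The plan is to reduce Popa's stability criterion to a statement about the Bratteli diagram of $P_{n,\pm}\subseteq P_{n+1,\pm}$ and then read off the graph condition. By definition $P_\pm$ is stable at depth $n$ exactly when $P_{n+1,\pm}=P_{n,\pm}+P_{n,\pm}e_{n,\pm}P_{n,\pm}$; by Lemma~\ref{lem:AngledBrackets} this is the same as $P_{n+1,\pm}=\langle P_{n,\pm},e_{n,\pm}\rangle$, the Jones basic construction for $P_{n-1,\pm}\subseteq P_{n,\pm}$ realized inside $P_{n+1,\pm}$.

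The key step is the identification
$$P_{n,\pm}\,e_{n,\pm}\,P_{n,\pm}=z_{n+1,\pm}\,P_{n+1,\pm},$$
the two-sided ideal of $P_{n+1,\pm}$ generated by $e_{n,\pm}$. The inclusion $\subseteq$ is immediate, since $z_{n+1,\pm}$ is central and $e_{n,\pm}=z_{n+1,\pm}\,e_{n,\pm}$. For $\supseteq$ I would show $P_{n+1,\pm}\,e_{n,\pm}=P_{n,\pm}\,e_{n,\pm}$ (and symmetrically $e_{n,\pm}\,P_{n+1,\pm}=e_{n,\pm}\,P_{n,\pm}$): capping off the last two strands of an element of $P_{n+1,\pm}$ produces an arbitrary diagram with $n+1$ strands on top and $n-1$ on the bottom, and by a rotation such a diagram already factors through an element of $P_{n,\pm}$, so $x\,e_{n,\pm}\in P_{n,\pm}\,e_{n,\pm}$ for every $x\in P_{n+1,\pm}$; concretely this uses only the defining relation $e_{n,\pm}\,x\,e_{n,\pm}=E_{P_{n-1,\pm}}(x)\,e_{n,\pm}$ together with the geometry of the Jones projection. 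It follows that
$$P_{n,\pm}+P_{n,\pm}\,e_{n,\pm}\,P_{n,\pm}=P_{n,\pm}+z_{n+1,\pm}\,P_{n+1,\pm},$$
and hence $P_\pm$ is stable at depth $n$ if and only if the $*$-homomorphism $\varphi\colon P_{n,\pm}\to(1-z_{n+1,\pm})P_{n+1,\pm}$, $x\mapsto(1-z_{n+1,\pm})x$, is surjective.

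Finally I would translate surjectivity of $\varphi$ into the graph condition using Definition~\ref{defn:BratteliDiagram}: $(1-z_{n+1,\pm})P_{n+1,\pm}$ is the sum of the matrix blocks of $P_{n+1,\pm}$ indexed by the vertices of $\Gamma_\pm$ at depth $n+1$, and the multiplicity matrix of $\varphi$ relating depth-$n$ vertices to depth-$(n+1)$ vertices is exactly the edge-adjacency matrix of $\Gamma_\pm$ between those two depths. A unital $*$-homomorphism of multimatrix algebras with multiplicity matrix $\Lambda$ is onto precisely when every column of $\Lambda$ has a single entry, equal to $1$, and every row has at most a single entry; that is to say, no vertex at depth $n$ splits, no two vertices at depth $n$ merge at depth $n+1$, and every edge between depths $n$ and $n+1$ is simple --- which is exactly stability of $\Gamma_\pm$ at depth $n$. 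Running the argument with the shadings interchanged handles $P_-$ versus $\Gamma_-$.

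I expect the main obstacle to be the ideal identification $P_{n,\pm}\,e_{n,\pm}\,P_{n,\pm}=z_{n+1,\pm}\,P_{n+1,\pm}$, and in particular the equality $P_{n+1,\pm}\,e_{n,\pm}=P_{n,\pm}\,e_{n,\pm}$, which must be extracted carefully from the planar structure rather than waved at. A secondary pitfall is the algebra-to-graph dictionary: surjectivity of $\varphi$ can fail both when two depth-$n$ vertices merge (giving a block-diagonal, hence proper, image in a single target block) and when one depth-$n$ vertex splits (giving a diagonal, hence proper, image across two target blocks), so the ``no splitting'' clause of stability is genuinely needed and not automatic.
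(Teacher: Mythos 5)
Your proposal is correct and follows essentially the same route as the paper: the paper's proof is exactly the chain of equivalences $P_{n+1,\pm}=P_{n,\pm}+P_{n,\pm}e_{n,\pm}P_{n,\pm}\Longleftrightarrow(1-z_{n+1,\pm})P_{n+1,\pm}=(1-z_{n+1,\pm})P_{n,\pm}\Longleftrightarrow\Gamma_\pm$ stable, resting on the identification $P_{n,\pm}e_{n,\pm}P_{n,\pm}=z_{n+1,\pm}P_{n+1,\pm}$ coming from the basic-construction structure recorded in Definition~\ref{defn:BratteliDiagram}. You have simply filled in the details the paper delegates to the standard theory of the finite-dimensional basic construction and to the Bratteli-diagram dictionary, including the correct observation that surjectivity fails under both merging and splitting.
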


\begin{proof}
As in Definition \ref{defn:BratteliDiagram}, let $z_{n+1,\pm}$ be the central support of $e_{n,\pm}$ in $P_{n+1,\pm}$, and identify $P_{n,\pm}$ with its image in $P_{n+1,\pm}$ under the right inclusion. Then
\begin{align*}
P_{\pm}\text{ is stable at depth }n
& \Longleftrightarrow P_{n+1,\pm}=P_{n,\pm}+P_{n,\pm}e_{n,\pm}P_{n,\pm}\\
& \Longleftrightarrow (1-z_{n+1,\pm})P_{n+1,\pm}=(1-z_{n+1,\pm})P_{n,\pm}\\
& \Longleftrightarrow \Gamma_\pm \text{ is stable at depth }n.
\end{align*}
\end{proof}

\begin{defn}
Let $\Gamma_\pm(k)$ be the truncation of $\Gamma_\pm$ to depth $k$ consisting of all vertices with depth at most $k$ and all edges connecting them.

If $\Gamma_\pm$ is stable at depth $k$ for all $k \ge n$
then $\Gamma_\pm$ can be obtained
by attaching graphs of type $A$ to $\Gamma_\pm(n)$.
The following theorem implies that,
with some simple exceptions,
these attached graphs of type $A$ have finite length.
We call them \emph{$\Afinite$ tails}.
\end{defn}

\begin{thm}[\cite{MR1356624}]\label{thm:PopaStability}
If a connected component of $\Gamma_\pm\setminus \Gamma_\pm(n)=A_\infty$ for some $n\geq 0$, then $\Gamma_\pm\in \{ A_\infty,A_{\infty,\infty}, D_\infty\}$.
\end{thm}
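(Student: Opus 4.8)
The plan is to reduce the classification to a purely combinatorial fact about the Bratteli diagram, exploiting that once a connected component of $\Gamma_\pm \setminus \Gamma_\pm(n)$ equals $A_\infty$, the graph $\Gamma_\pm$ is stable at all depths $k \geq n$ (by Proposition \ref{prop:stable} this is equivalent to the stability of the tower $P_\pm$ at those depths). So let $m \geq n$ be the smallest depth at which $\Gamma_\pm$ first becomes, and remains, stable; I would like to show that no vertex can survive \emph{before} depth $m$ other than in an extremely rigid pattern, forcing $\Gamma_\pm$ to be one of $A_\infty$, $A_{\infty,\infty}$, $D_\infty$. The key structural input is the Jones basic construction description in Definition \ref{defn:BratteliDiagram}: between depths $k$ and $k+1$ the Bratteli diagram is the reflection of the part between depths $k-1$ and $k$ (``old part'') glued to a ``new part.'' Stability at depth $k$ says exactly that the new part is trivial, i.e., $(1-z_{k+1,\pm})P_{k,\pm} = (1-z_{k+1,\pm})P_{k+1,\pm}$, so from depth $m$ onward the graph is obtained by purely reflecting, i.e., attaching type-$A$ strands.

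The first step is to set up the ``reflection'' combinatorics precisely. Label the depths of the truncated graph $\Gamma_\pm(m)$; because $\Gamma_\pm$ is stable from depth $m$ on, each vertex $v$ at depth exactly $m$ spawns a single $A$-tail (an infinite or finite string) and no two such tails merge. The hypothesis that \emph{some} component of $\Gamma_\pm \setminus \Gamma_\pm(n) = A_\infty$ then says at least one of these tails is infinite. Now run the stability/reflection backwards: since the portion between depths $k$ and $k+1$ (for $k \geq m$) is a reflection of the portion between $k-1$ and $k$, and all these are simple non-merging edges, the entire graph past depth $m-1$ has the shape of a disjoint union of half-lines. The main work is to show that $\Gamma_\pm(m)$ itself — the ``seed'' — must be so small that the whole graph is one of the three listed. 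Here I would use two facts: (i) $\Gamma_\pm$ is connected (it is a principal graph with basepoint $\star$), so there is really only \emph{one} component, hence at most one tail, hence at most finitely many vertices at depth $< m$; and (ii) the dimension vector (the Perron–Frobenius data) is constrained because $\delta = \|\Gamma_\pm\|$ or $\delta^2 = \|\Gamma_\pm\|^2$ must be consistent with an infinite graph — but $A_\infty$, $A_{\infty,\infty}$, $D_\infty$ all have norm $2$, and any graph strictly containing one of these as an end, with an extra branch or multiple edge, has norm $> 2$, which is impossible for a subgraph of a graph of norm $\leq \delta$ once we know $\delta$ is pinned by the infinite $A_\infty$ end to be $2$. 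Actually the cleanest route: an infinite $A_\infty$ tail forces the graph norm to be exactly $2$ (the tail alone already has norm $2$, and the norm is monotone under taking subgraphs while bounded by $2$ from the ambient modulus considerations — more carefully, $\delta \geq \|\Gamma_\pm\| \geq \|A_\infty\| = 2$, and in the stable regime the Perron eigenvector equation on the tail forces $\|\Gamma_\pm\| = 2$). A connected graph of norm exactly $2$ with an infinite ray is, by the classification of such graphs (the affine/infinite Dynkin list), exactly one of $A_\infty$, $A_{\infty,\infty}$, $D_\infty$ — the remaining norm-$2$ graphs $\tilde A_n, \tilde D_n, \tilde E_n$ are finite and $A_{-\infty,\infty}$ coincides with $A_{\infty,\infty}$ in this notation.

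So the skeleton is: (1) invoke Proposition \ref{prop:stable} to pass from the graph hypothesis to stability of the tower at all depths $k \geq n$; (2) use Definition \ref{defn:BratteliDiagram} to see that from depth $n$ on the graph grows only by reflection, i.e., by attaching $A$-type strands; (3) observe $\Gamma_\pm$ is connected with a distinguished basepoint, so there is a single infinite ray and only finitely many vertices off it; (4) show the presence of an infinite ray forces the graph norm to equal $2$ via the Perron–Frobenius eigenvector equation restricted to the eventually-$A$ part; (5) quote the classification of connected graphs of norm $2$ to conclude $\Gamma_\pm \in \{A_\infty, A_{\infty,\infty}, D_\infty\}$. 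I expect the main obstacle to be step (4) combined with ruling out multiple edges near the basepoint: a priori $\Gamma_\pm(n)$ could carry a double or triple edge (as happens for spoke graphs earlier in the paper), and one must argue that such a feature would push $\|\Gamma_\pm\|$ strictly above $2$, contradicting the infinite-ray constraint; this is where the quantitative norm estimate (rather than soft reflection combinatorics) does the real work, and it is essentially the content being imported from \cite{MR1356624}.
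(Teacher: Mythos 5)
The paper does not actually prove this statement; it is imported verbatim from \cite{MR1356624} (with the remark that it also follows from Theorem 6.5 of \cite{MR2679382} together with \cite{tvc}), so there is no internal proof to compare against. Your sketch, however, contains two genuine gaps that make it circular rather than merely incomplete. First, your opening reduction is false: the hypothesis that \emph{one connected component} of $\Gamma_\pm\setminus\Gamma_\pm(n)$ equals $A_\infty$ does not imply that $\Gamma_\pm$ is stable at depths $k\geq n$. Stability at depth $k$ is a condition on \emph{all} vertices at depths $k$ and $k+1$ (no splitting, no merging, all edges simple), whereas the hypothesis only controls a single tail; a priori the rest of the graph may branch, merge, and carry multiple edges at every depth beyond $n$. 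So Proposition \ref{prop:stable} and the reflection picture of Definition \ref{defn:BratteliDiagram} cannot be invoked at the outset --- controlling the rest of the graph is precisely what the theorem asserts.

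Second, your step (4) --- that an infinite ray forces $\|\Gamma_\pm\|=2$ --- is false for graphs in general (attach an $A_\infty$ ray to any finite graph of norm greater than $2$), and the proposed justification does not rescue it: on an infinite ray the eigenvector recursion $\delta d_i = d_{i-1}+d_{i+1}$ admits strictly positive solutions for every $\delta\geq 2$, namely $d_i = A\mu^i + B\mu^{-i}$ with $\mu+\mu^{-1}=\delta$ and $A>0$, so the Perron--Frobenius data restricted to the tail yields no contradiction when $\delta>2$. Note also that your monotonicity remark points the wrong way: containing $A_\infty$ as a subgraph gives $\|\Gamma_\pm\|\geq 2$, and nothing in your argument bounds the norm above by $2$. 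The reason an $A_\infty$ tail on a \emph{principal graph} (as opposed to an arbitrary bipartite graph) forces $\delta=2$ is genuinely operator-algebraic --- Popa's relative commutant argument in \cite{MR1356624}, or the annular-category analysis underlying \cite{MR2679382} --- and your closing sentence concedes that this ``is essentially the content being imported from \cite{MR1356624},'' which means the proposal assumes the theorem it sets out to prove. As written, only your steps (3) and (5), connectivity and the classification of connected graphs of norm exactly $2$, are sound.
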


Note that this theorem also follows from Theorem 6.5 in \cite{MR2679382}, which applies to infinite depth subfactors by \cite{tvc}.

\section{Principal graph stability via trains}\label{sec:ExtendingPopa}
 
In this section, we go through the proof of Popa's Principal Graph Stability Theorem \ref{thm:PrincipalGraphStability} via planar algebras and trains to prove Theorem \ref{thm:EvenStability}.

\subsection{Trains}\label{sec:Trains}

Let $P_\bullet$ be a subfactor planar algebra, and let $TL_\bullet\subset P_\bullet$ be its Temperley-Lieb planar subalgebra.

\begin{defn}
Given a set $\cS_\pm\subset P_{n,\pm}$, a \emph{train from $\cS_\pm$} is a planar tangle $\cT$ labeled by elements from $\cS_\pm$ such that for each input disk of $\cT$, its distinguished interval meets the region that meets the distinguished interval of the output disk. 
A train in $P_{k,\pm}$ can be drawn in the form 
$$
\begin{tikzpicture}[baseline = -.3cm]
	\draw (-2,-1.2)--(3,-1.2);
	\draw (-1,0)--(-1,-1);
	\draw (0,0)--(0,-1);
	\draw (2,0)--(2,-1);
	\filldraw[thick, unshaded] (-1.5,-.8)--(-1.5,-1.6)--(2.5,-1.6)--(2.5,-.8)--(-1.5,-.8);
	\draw[thick, unshaded] (-1,0) circle (.4);
	\draw[thick, unshaded] (0,0) circle (.4);	
	\draw[thick, unshaded] (2,0) circle (.4);
	\node at (-1.7,-.8) {$\star$};
	\node at (-1.3,.5) {$\star$};	
	\node at (-.3,.5) {$\star$};
	\node at (1.7,.5) {$\star$};
	\node at (1,0) {$\cdots$};
	\node at (.5,-1.2) {$T$};
	\node at (-1,0) {$S_1$};
	\node at (0,0) {$S_2$};
	\node at (2,0) {$S_\ell$};
	\node at (-1.7,-1.4) {{\scriptsize{$k$}}};
	\node at (2.7,-1.4) {{\scriptsize{$k$}}};
	\node at (-1.2,-.6) {{\scriptsize{$2n$}}};
	\node at (-.2,-.6) {{\scriptsize{$2n$}}};
	\node at (1.8,-.6) {{\scriptsize{$2n$}}};
\end{tikzpicture}
$$
where $S_1,\dots, S_\ell\in \cS_\pm$, $T$ is a single Temperley-Lieb diagram, and the distinguished interval of the external disk is at the top.

An \emph{$\ell$-car train from $\cS_\pm$}
is a train from $\cS_\pm$ with $\ell$ labeled input disks.
Note that any single diagram from $TL_{k,\pm}$ is a $0$-car train from $\cS_\pm$.
We let $\trains_{k,\pm}(\cS_\pm)$ denote the set of trains from $\cS_\pm$ in $P_{k,\pm}$.
We say \emph{trains from $\cS_\pm$ span $P_\pm$} if $P_{k,\pm}=\spann(\trains_{k,\pm}(\cS_\pm))$ for all $k\geq n$.
\end{defn}

\begin{lem}\label{lem:FourierTransform}
Suppose $k > n$.
If trains from $P_{n,+}$ span $P_{k,+}$,
then trains from $P_{n+1,-}$ span $P_{k,-}$.
\end{lem}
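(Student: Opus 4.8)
The plan is to transport a spanning statement for $P_{k,+}$ to one for $P_{k,-}$ by applying the one-click rotation (Fourier transform). The key structural fact is that the rotation by one click intertwines the shaded and unshaded pictures: it sends an element of $P_{k,+}$ to an element of $P_{k,-}$ (for $k$ of the appropriate parity, with a suitable shift in boundary points), it is invertible, and crucially it carries trains to trains up to the precise bookkeeping we need. So the first thing I would do is set up the rotation map carefully as a planar tangle, check it is a linear isomorphism between the relevant $P_{k}$ spaces, and pin down what it does to a train picture: an $\ell$-car train from $\cS$ labeled by $S_1,\dots,S_\ell\in P_{n,+}$ with Temperley--Lieb core $T$ should go to a planar diagram which, after isotopy, is again an $\ell$-car picture whose cars are the rotations of the $S_i$ and whose core is the rotation of $T$. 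The point is that the defining condition of a train --- every input disk's distinguished interval faces the region adjacent to the distinguished interval of the output disk --- is a rotational notion, so it is preserved.

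The one subtlety, and the reason the hypothesis reads "$k>n$" and the reason the cars jump from depth $n$ in $P_{+}$ to depth $n+1$ in $P_{-}$, is that rotating a generator $S\in P_{n,+}$ by one click does not land in $P_{n,-}$; it lands naturally in $P_{n+1,-}$ after one caps/strand is absorbed, or more precisely the rotation of the whole $k$-boundary diagram reorganizes the boundary so that the natural "car" sitting inside it is an element of $P_{n+1,-}$ rather than $P_{n,-}$. So the second step is to take an arbitrary $x\in P_{k,-}$, apply the inverse rotation to land in $P_{k,+}$ (here $k>n$ guarantees we are strictly above the generating depth so nothing degenerates), invoke the hypothesis to write that element as a linear combination of trains from $P_{n,+}$, and then apply the rotation termwise. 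Each term becomes, by the previous paragraph, a train whose cars lie in $P_{n+1,-}$ and whose core is a single Temperley--Lieb diagram, i.e., an element of $\spann(\trains_{k,-}(P_{n+1,-}))$. Since the rotation is linear and invertible, we recover $x$ itself as such a combination, proving $P_{k,-}=\spann(\trains_{k,-}(P_{n+1,-}))$ for all $k>n$, which is the claim.

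I expect the main obstacle to be the second-to-last bookkeeping point: verifying that the one-click rotation of a train is literally expressible in the standard train form drawn in the definition, with a genuine single Temperley--Lieb diagram as the core and with the cars correctly identified as elements of $P_{n+1,-}$ (as opposed to $P_{n,-}$, or some diagram that only becomes a train after further manipulation). This is where one has to be honest about how the distinguished intervals and shadings move under rotation, and about why a strand's worth of the boundary gets reallocated into the cars --- essentially unpacking why rotation interacts with the "add a string on the right" inclusion used to define depth. The rest --- invertibility and linearity of rotation, and the formal "apply to each term" argument --- is routine once that picture is drawn correctly. It may be cleanest to prove the car-reindexing statement as a small separate sublemma about a single input disk and then assemble it, rather than trying to manipulate the whole multi-car picture at once.
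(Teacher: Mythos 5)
Your proposal is correct and follows essentially the same route as the paper: apply the one-click rotation (a linear isomorphism $P_{k,+}\to P_{k,-}$) to a train from $P_{n,+}$, and observe that the arc it introduces passes over the cars and is absorbed into them, turning each $S_i$ into $j(S_i)\in P_{n+1,-}$ while the core remains a single Temperley--Lieb diagram. The "bookkeeping point" you flag as the main obstacle is exactly what the paper's displayed picture verifies, so there is no gap.
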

\begin{proof}
Consider the Fourier transform (one click rotation) of a train from $P_{n,+}$, which can be drawn with an arc passing over the $\ell$ labelled disks
$S_1,\dots,S_\ell \in P_{n,+}$.
We then combine each $S_i$ with a segment of this arc
to obtain $j(S_i) \in P_{n+1,-}$,
and thus obtain a train from $P_{n+1,-.}$.
For example,
in the case of a $3$-car train,
we have the following:
$$
\begin{tikzpicture}[baseline = -.6cm]
	\clip (-2.6,-2)--(2.6,-2)--(2.6,1)--(-2.6,1);
	\fill[shaded] (-2.6,-1)--(-2.6,1)--(2.6,1)--(2.6,-1.4)--(1,-1.4)--(1,-1.2)--(-1.6,-1.2)--(-1.6,-1);
	\filldraw[unshaded] (1.4,-1.4)--(1.8,-1) arc (-90:0:.4cm)--(2.2,.3) arc (0:90:.4cm)--(-1.8,.7) arc (90:180:.4cm) -- (-2.2,-.4) arc (0:-90:.4cm)--(-2.8,-.8)--(-2.8,-1.2);
	\draw (-1.8,-1.4) arc (90:270:.2cm) -- (2.6,-1.8);
	\draw (-2.6,-1)--(-1.6,-1);
	\draw (1,-1.4)--(2.6,-1.4);	
	\draw (-1.4,0)--(-1.4,-1);
	\draw (0,0)--(0,-1);
	\draw (1.4,0)--(1.4,-1);
	\filldraw[thick, unshaded] (-1.8,-1.6)--(1.8,-1.6)--(1.8,-.8)--(-1.8,-.8)--(-1.8,-1.6);
	\draw[thick, unshaded] (-1.4,0) circle (.4);
	\draw[thick, unshaded] (0,0) circle (.4);	
	\draw[thick, unshaded] (1.4,0) circle (.4);
	\node at (-2,-.8) {$\star$};
	\node at (-1.7,.5) {$\star$};	
	\node at (-.3,.5) {$\star$};
	\node at (1.1,.5) {$\star$};
	\node at (0,-1.2) {$T$};
	\node at (-1.4,0) {$S_1$};
	\node at (0,0) {$S_2$};
	\node at (1.4,0) {$S_3$};
	\node at (2.4,0) {{\scriptsize{$1$}}};
	\node at (-2.2,-1.6) {{\scriptsize{$1$}}};
	\node at (-2.2,-1.2) {{\scriptsize{$k-1$}}};
	\node at (2.2,-1.2) {{\scriptsize{$k-1$}}};
	\node at (-1.6,-.6) {{\scriptsize{$2n$}}};
	\node at (-.2,-.6) {{\scriptsize{$2n$}}};
	\node at (1.2,-.6) {{\scriptsize{$2n$}}};
	\draw[dashed] (-.5,-.45)--(-.5,.85)--(.5,.85)--(.5,-.45)--(-.5,-.45);
	\draw[dashed] (-1.9,-.45)--(-1.9,.85)--(-.9,.85)--(-.9,-.45)--(-1.9,-.45);
	\draw[dashed] (1.9,-.45)--(1.9,.85)--(0.9,.85)--(0.9,-.45)--(1.9,-.45);
\end{tikzpicture}
=
\begin{tikzpicture}[baseline = -.6cm]
	\fill[shaded] (-2.8,.9)--(-2.8,-1.2)--(2.8,-1.2)--(2.8,.9);
	\draw (-2.8,-1.2)--(2.8,-1.2);
	\draw (-1.6,0)--(-1.6,-1);
	\draw (0,0)--(0,-1);
	\draw (1.6,0)--(1.6,-1);
	\filldraw[thick, unshaded] (-2,-1.6)--(2,-1.6)--(2,-.8)--(-2,-.8)--(-2,-1.6);
	\draw[thick, unshaded] (-1.6,.1) circle (.55);
	\draw[thick, unshaded] (0,.1) circle (.55);
	\draw[thick, unshaded] (1.6,.1) circle (.55);
	\node at (-2.2,-.85) {$\star$};
	\node at (-2.1,.65) {$\star$};	
	\node at (-.5,.65) {$\star$};
	\node at (1.1,.65) {$\star$};
	\node at (0,-1.2) {$T'$};
	\node at (-1.55,.1) {$j(S_1)$};
	\node at (.05,.1) {$j(S_2)$};
	\node at (1.65,.1) {$j(S_3)$};
	\node at (-2.4,-1.4) {{\scriptsize{$k$}}};
	\node at (2.4,-1.4) {{\scriptsize{$k$}}};
	\node at (-2.1,-.6) {{\scriptsize{$2n+2$}}};
	\node at (-.5,-.6) {{\scriptsize{$2n+2$}}};
	\node at (2.1,-.6) {{\scriptsize{$2n+2$}}};
\end{tikzpicture}.
$$
Since trains from $P_{n,+}$ span $P_{k,+}$,
and the one click rotation is a vector space isomorphism,
it follows that trains from $P_{n+1,-}$ span $P_{k,-}$.
\end{proof}

\subsection{Trains and stability}\label{sec:TrainsAndStability}

Trains first appeared in \cite{MR1334479}.
The following lemma
allows us to translate between
the above planar algebra definition of trains
and Popa's $\lambda$-lattice formalism.

\begin{lem} \label{lem:AlgebraTrain}
For all $k>n$, 
$\spann(\trains_{k,+}(P_{n,+}))=\langle P_{n,+},TL_{k,+}\rangle$.
\end{lem}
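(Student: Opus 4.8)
The plan is to prove both inclusions, exploiting the fact that $\spann(\trains_{k,+}(P_{n,+}))$ and $\langle P_{n,+}, TL_{k,+}\rangle$ are each built from the same two kinds of pieces: elements of $P_{n,+}$ (sitting inside $P_{k,+}$ via the right inclusion, i.e.\ by adding $k-n$ vertical strands on the right) and Temperley--Lieb diagrams in $TL_{k,+}$. For the inclusion $\spann(\trains_{k,+}(P_{n,+}))\subseteq \langle P_{n,+}, TL_{k,+}\rangle$, I would start from the normal form of a train displayed in the definition: an $\ell$-car train is a single Temperley--Lieb tangle $T\in TL_{k,+}$ into whose ``bottom'' we have plugged the disks labelled $S_1,\dots,S_\ell\in P_{n,+}$, each meeting the outer distinguished interval. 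The key observation is that each such plugged-in generator $S_i$, together with the $2n$ strands emanating from its disk and the cup-cap structure of $T$ around it, can be rewritten as a product in $P_{k,+}$ of an element of $P_{n,+}$ (namely $S_i$ with extra strands added on the right) with Temperley--Lieb elements that rotate/reposition those $2n$ strands into place. So the whole train becomes a product, in the algebra $P_{k,+}$, of elements of $P_{n,+}$ and elements of $TL_{k,+}$; hence it lies in $\langle P_{n,+}, TL_{k,+}\rangle$.

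For the reverse inclusion $\langle P_{n,+}, TL_{k,+}\rangle \subseteq \spann(\trains_{k,+}(P_{n,+}))$, I would show that the span of trains is closed under multiplication and contains both generating sets. It visibly contains $TL_{k,+}$ (a single TL diagram is a $0$-car train) and it contains the image of $P_{n,+}$ (an element $x\in P_{n,+}$ with $k-n$ strands added on the right is a $1$-car train whose TL part is the identity). So it suffices to check closure under multiplication: given two trains, stack one on top of the other; the resulting tangle has some generators from $\cS=P_{n,+}$ and some internal Temperley--Lieb structure, but it is \emph{not} yet in train normal form because the generators coming from the top train need not touch the new outer distinguished interval. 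The point is to push each such generator back down to the outer boundary. This is exactly where one uses that $\cS = P_{n,+}$ is the \emph{full} algebra at depth $n$, not an arbitrary subset: when a strand from one generator disk wraps around and the generator fails to meet the outer region, one can absorb the obstructing cups/caps into the label itself, replacing $S\in P_{n,+}$ by a new element of $P_{n,+}$ (using that $P_{n,+}$ is closed under the planar operations of capping off or inserting TL tangles among its $2n$ strands) and thereby reduce to a train with the same or fewer cars. Iterating, every product of trains reduces to a linear combination of trains.

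The main obstacle I expect is the bookkeeping in this last ``push to the boundary'' step: making precise, in terms of the $2n$ strands around each generator disk and the planar isotopy class of the ambient TL tangle, why a generator that doesn't touch the outer distinguished interval can always be modified (by absorbing a bounded piece of TL diagram into its label, at the cost of changing which element of $P_{n,+}$ labels it) so that it does, without increasing the number of cars. One should be a little careful that this rewriting terminates and that the "single Temperley--Lieb diagram $T$" in the normal form really can be taken to be a single diagram rather than a linear combination --- but since $TL_{k,+}$ is spanned by its diagrams and the span of trains is what we are after, allowing $T$ to range over a spanning set of TL diagrams is harmless. Once the closure-under-multiplication claim is established, both inclusions follow and the lemma is proved; this is also the natural place to note that the argument is the planar-algebra translation of Popa's reduction of words in $P_{n,+}$ and $e_{n,+}$, now carried out for all of $TL_{k,+}$ at once rather than just the single Jones projection as in Lemma~\ref{lem:AngledBrackets}.
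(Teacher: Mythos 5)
There is a genuine gap, and it sits exactly where you have placed the least effort. You treat the inclusion $\spann(\trains_{k,+}(P_{n,+}))\subseteq \langle P_{n,+},TL_{k,+}\rangle$ as a matter of ``rotating/repositioning'' the $2n$ strands of each generator disk, but this is the entire content of the paper's proof. To write an $\ell$-car train as a product you must produce horizontal cuts of the ambient Temperley--Lieb diagram $T$ that cross exactly $k$ strings (so the slices between generators lie in $TL_{k,+}$) and isolate each generator in a slice with exactly $n$ strings above, $n$ below, and $k-n$ passing to its right (so that slice is $S\otimes 1_{k-n}$). Whether such cuts exist is a nontrivial combinatorial question; the paper answers it by putting the graph metric on the tree dual to $T$ and finding a branch point $y$ with $d(y,x),d(y,z)\le n$ and $d(y,p)\le k-n$. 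Moreover, your implicit claim that \emph{each} generator gets its own slice is false in general: when consecutive cars are joined to each other through $T$ by many strings, no such cut separates them, and one must instead merge a block of cars together with the intervening Temperley--Lieb structure into a \emph{single} new element of $P_{n,+}$. The paper's induction on $\ell$ runs on precisely this dichotomy (either $d(x_0,x_\ell)\le 2n$, so all cars merge into one, or some intermediate $x_i$ has $d(x_i,p)\le k$, so the train splits in two), and it needs a separate technical lemma on tree metrics to show one of the two cases always occurs. None of this appears in your proposal.

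Conversely, the direction you expend most of your effort on is the one the paper dismisses as obvious, and your route through it is both harder than necessary and circular in flavor. To see $\langle P_{n,+},TL_{k,+}\rangle\subseteq\spann(\trains_{k,+}(P_{n,+}))$ you do not need the span of trains to be closed under multiplication of arbitrary trains: any word in elements of $TL_{k,+}$ and elements $x\otimes 1_{k-n}$ with $x\in P_{n,+}$, drawn as a vertical stack, already has every $P_{n,+}$ box flush against the left edge with $k-n\ge 1$ strands to its right, so every box's distinguished interval meets the single region running up the left side to the external distinguished interval --- the word \emph{is} a train as it stands. The ``push each buried generator back to the boundary'' step you flag as the main obstacle is exactly the hard direction of the lemma in disguise, so organizing the proof around closure under multiplication of general trains gains nothing and hides the real work. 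The correct shape of the argument is: products of generators are visibly trains (easy direction), and an arbitrary train decomposes as a product via the dual-tree metric argument (hard direction).
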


Here, $P_{n,+}$ is considered as a subalgebra of $P_{k,+}$
by the inclusion operation of adding $k-n$ vertical strands on the right,
and the angled brackets denote the associative algebra
generated by $P_{n,+}$ and $TL_{k,+}$ under the usual multiplication.

\begin{proof}
The inclusion
$$\langle P_{n,+},TL_{k,+}\rangle \subseteq \spann(\trains_{k,+}(P_{n,+}))$$
is obvious.
For the other inclusion,
suppose
$$X \in \trains_{k,+}(P_{n,+}).$$
is an $\ell$-car train.
Thus $X$ consists of $\ell$ labelled disks $S_1,\dots,S_\ell \in P_{n,+}$
attached to a Temperley-Lieb diagram $T \in TL_{k+n\ell,+}$.

To help with the proof,
we define a metric on the regions of $T$.
If $x$ and $y$ are two points in the regions of $T$, i.e., they are not in the strings of $T$,
a path in $T$ from $x$ to $y$ is a {\em geodesic}
if it crosses the strings of $T$ transversely and a minimum number of times.
The {\em distance} $d(x,y)$
is the number of crossings in a geodesic from $x$ to $y$.
This determines a metric on the regions of $T$.
Note that it is the same as
the graph metric on the tree dual to $T$.
We will make use of the properties of metrics on trees.

Consider the case $\ell = 1$.
Thus $X$ consists of some $S \in P_{n,+}$
attached to a Temperley-Lieb diagram $T \in TL_{k+n,+}$.
Draw $X$ so that $T$ is in a rectangle
with $k$ endpoints at the top,
$k$ endpoints at the bottom,
and $S$ attached to the left edge.

Let $x$, $z$ and $p$ be points on the boundary of $T$,
where $x$ is the bottom left corner,
$z$ is the top left corner,
and $p$ is on the right edge.
Note that
\begin{itemize}
\item $d(x,z) \le 2n$,
\item $d(x,p), d(z,p) \le k$,
\end{itemize}
and furthermore, each of these inequalities is an equality modulo $2$.
By basic properties of metrics on trees,
there exists a point $y$ such that
\begin{itemize}
\item $d(y,x), d(y,z) \le n$,
\item $d(y,p) \le k-n$,
\end{itemize}
and furthermore,
such that each of these inequalities is an equality modulo $2$.
Indeed,
such a $y$ exists somewhere on the $Y$-shaped tree
containing $x$, $z$ and $p$,
and can be found by some straightforward checking of cases.

Now define an embedded graph $Y$ in $T$
consisting of three lines
going from $y$ to $x$, $z$ and $p$.
Furthermore, assume these lines cross the strings of $T$
$n$, $n$, and $k-n$ times, respectively.
To do this,
start with geodesics,
and introduce switch-backs if necessary to add more crossings.

Our graph $Y$ decomposes the diagram $X$ into three sub-diagrams.
Call these $A$, $B$ and $C$,
where $A$ is at the bottom,
$B$ is on the left and includes $S$,
and $C$ is at the top.
Then $A,C \in TL_{k,+}$, $B \in P_{n,+}$, and
$X = ABC$ is of the desired form.

Now for the induction step, suppose $\ell > 1$.
Thus $X$ consists of some $S_1,\dots,S_\ell \in P_{n,+}$
attached to a Temperley-Lieb diagram $T \in TL_{k+n\ell,+}$.
Draw the train ``sideways'' so that $T$ has
$k$ endpoints at the top,
$k$ endpoints at the bottom,
and $S_1,\dots,S_\ell$ attached along the left edge.
Number $S_1,\dots,S_\ell$ from bottom to top.

Let $x_0,\dots,x_\ell$ be points on the left edge of $T$
separating $S_1,\dots,S_\ell$.
Here, $x_0$ is the bottom left corner,
$x_\ell$ is the top left corner,
and every other $x_i$ separates $S_i$ and $S_{i+1}$.
Let $p$ lie on the right edge of $T$.

Note that
\begin{itemize}
\item $d(x_i,x_{i+1}) \le 2n$ for all $i \in \{0,\dots,\ell-1\}$, and
\item $d(x_0,p), d(x_\ell,p) \le k$,
\end{itemize}
and furthermore, each of these inequalities is an equality modulo $2$.

\item[\text{\underline{ Case 1:}}] Suppose $d(x_0,x_\ell) \le 2n$.
Draw an embedded arc from the bottom left to the top left corner of $T$
that crosses the strings of $T$ transversely $2n$ times.
This arc divides $X$ into two regions,
call them $S'$ to the left, and $T'$ to the right.
Then $S' \in P_{n,+}$ and $T' \in TL_{k+n,+}$.
Thus $X$ is a $1$-car train,
and the result follows from the base case.

\item[\text{\underline{ Case 2:}}] Suppose $d(x_i,p) \le k$ for some $i \in \{1,\dots,\ell-1\}$.
Draw an embedded arc from $x_i$ to the right edge of $T$
that crosses the strings of $T$ transversely $k$ times.
This divides $X$ into two regions,
call them $A$ on the bottom and $B$ on the top.
Then $A$ is an $i$-car train,
$B$ is a $(\ell-i)$-car train,
and $X = AB$.
The result now follows by induction.

\item[]
It remains only to show that one of the above two cases must hold.
This follows from basic properties of metrics on trees.
We prove it as a separate technical lemma below.
\end{proof}

\begin{lem}
Suppose $x_1,\dots,x_\ell$ and $p$ are vertices in a tree such that
\begin{itemize}
\item $d(x_i,x_{i+1}) \le 2n$ for all $i \in \{0,\dots,\ell-1\}$, and
\item $d(x_0,p), d(x_\ell,p) \le k$.
\end{itemize}
Then either
\begin{itemize}
\item $d(x_0,x_\ell) \le 2n$, or
\item $d(x_i,p) \le k$ for some $i \in \{1,\dots,\ell-1\}$.
\end{itemize}
\end{lem}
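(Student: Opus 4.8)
The plan is to argue purely in the tree metric, using the standard fact that in a tree any three points $a,b,c$ have a unique median $m=m(a,b,c)$, a vertex lying on all three geodesics $[a,b]$, $[b,c]$, $[a,c]$, and that $d(a,b)=d(a,m)+d(m,b)$ and similarly for the other pairs. I would first reduce to a statement about where $p$ sits relative to the path formed by the $x_i$. Concretely, suppose the second alternative fails, so $d(x_i,p) > k$ for every $i \in \{1,\dots,\ell-1\}$; I want to conclude $d(x_0,x_\ell) \le 2n$. Note the hypotheses also give $d(x_0,p), d(x_\ell,p) \le k$, so in particular the ``$>k$'' behaviour happens only at the interior indices.

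The key step is to look at the function $i \mapsto d(x_i,p)$ along the chain $x_0,x_1,\dots,x_\ell$. Since consecutive $x_i$ are within $2n$ of each other, $d(x_i,p)$ changes by at most $2n$ from one step to the next; it starts at $\le k$, ends at $\le k$, but is $> k$ in between, so it must rise above $k$ and come back down. Pick the index $i$ where it is largest. The hard part will be extracting geometric information from this: I would examine the median $m_i = m(x_i, x_0, p)$ and $m_i' = m(x_i, x_\ell, p)$. Because $d(x_i,p) > k \ge d(x_0,p)$, the median $m_i$ lies strictly between $x_i$ and $p$ off the geodesic toward $x_0$ — more precisely $d(x_i,m_i) = d(x_i,p) - d(m_i,p)$ and $d(m_i,p)\le d(x_0,p)\le k$, so $d(x_i, m_i) \ge d(x_i,p) - k > 0$. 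The same analysis at the $x_\ell$ end shows the analogous median $m_i'$ satisfies $d(x_i, m_i') > 0$, and by maximality of $d(x_i,p)$ together with the fact that moving from $x_i$ toward $x_0$ (resp. $x_\ell$) cannot increase the distance to $p$ by more than is allowed, one shows that $m_i$ and $m_i'$ both lie on the segment of $[x_i,p]$ adjacent to $x_i$, forcing the geodesics $[x_i,x_0]$ and $[x_i,x_\ell]$ to leave $x_i$ in the same direction (toward $p$) and hence to share an initial segment. Then $x_0$ and $x_\ell$ lie on the same side of $x_i$, and cancelling the common segment gives $d(x_0,x_\ell) = d(x_0,x_i) + d(x_i,x_\ell) - 2\,d(x_i, m(x_0,x_i,x_\ell))$, which one bounds by $2n$ using $d(x_{i-1},x_i),d(x_i,x_{i+1})\le 2n$ and the monotonicity of $d(\cdot,p)$ near the maximum.

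I expect the genuine obstacle to be packaging this cleanly: the case analysis on which of the medians coincide, and handling the parity/endpoint degeneracies (e.g.\ when the maximum of $d(x_i,p)$ is attained at $i=0$ or $i=\ell$, in which case $d(x_0,x_\ell)\le 2n$ should follow almost immediately, or when $\ell = 1$, which is vacuous). A cleaner route, which I would try first, is a direct induction on $\ell$: if $d(x_1,p) \le k$ we are in the second alternative (done), so assume $d(x_1,p) > k$; then apply the $\ell-1$ case to $x_1,\dots,x_\ell,p$ to get either $d(x_1,x_\ell)\le 2n$ or an interior index with $d(x_i,p)\le k$ (again done), and in the first subcase combine $d(x_0,x_1)\le 2n$ with $d(x_1,x_\ell)\le 2n$ and the median identity $d(x_0,x_\ell) = d(x_0,x_1)+d(x_1,x_\ell) - 2 d(x_1, m(x_0,x_1,x_\ell))$, using $d(x_1,p)>k\ge d(x_0,p),d(x_\ell,p)$ to show the median $m(x_0,x_1,x_\ell)$ is pulled off $x_1$ far enough (by at least $d(x_1,p)-k$ in each relevant direction) that the correction term kills the excess, yielding $d(x_0,x_\ell)\le 2n$. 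The base case $\ell = 1$ is trivial. This inductive version isolates all the tree geometry into the single median inequality, which I believe is the one nontrivial computation.
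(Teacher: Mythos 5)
Your underlying tripod computation is sound: for three points $a,b,c$ in a tree with $d(b,p)>d(a,p)$ and $d(b,p)>d(c,p)$, the geodesics $[b,a]$ and $[b,c]$ both share an initial segment with $[b,p]$, and the median identity then gives $d(a,c)\le\max\bigl(d(a,b),d(b,c)\bigr)$; your two-case estimate with $A=d(x_1,m(x_0,x_1,p))$ and $B=d(x_1,m(x_\ell,x_1,p))$ carries this out correctly (up to a harmless factor of $2$ in ``by at least $d(x_1,p)-k$''). This is essentially the contrapositive of the key step in the paper's proof, which shows that if the spoke of the tripod on $x_{i-1},x_i,x_{i+1}$ ending at $x_i$ is strictly shortest, then $d(x_i,p)<\max\bigl(d(x_{i-1},p),d(x_{i+1},p)\bigr)$.

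However, neither of your two ways of assembling this into a proof of the lemma is valid as written. In the induction you call the ``cleaner route,'' after assuming $d(x_1,p)>k$ you apply the $(\ell-1)$-case to the sequence $x_1,\dots,x_\ell$; but the lemma's hypothesis requires its \emph{first} point to satisfy $d(\cdot,p)\le k$, and $x_1$ violates this in exactly the case you are in, so the inductive hypothesis is inapplicable. (Weakening the hypothesis to allow $d(x_1,p)>k$ makes the statement false: three collinear points with steps $2n$ and $p$ far away.) In your first route, taking $i$ to maximize $d(x_i,p)$ and using the tripod on $x_0,x_i,x_\ell$ only yields $d(x_0,x_\ell)\le\max\bigl(d(x_0,x_i),d(x_i,x_\ell)\bigr)$, and these two distances can each be as large as about $2n\,\ell$; the claim that they can be beaten down to $2n$ ``using $d(x_{i-1},x_i),d(x_i,x_{i+1})\le 2n$'' is exactly the missing step. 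The repair is to delete an \emph{interior} point rather than the endpoint: if $i$ is an interior index maximizing $d(\cdot,p)$, your tripod inequality applied to $x_{i-1},x_i,x_{i+1}$ gives $d(x_{i-1},x_{i+1})\le 2n$, so $x_i$ can be removed while preserving all hypotheses, and one inducts on $\ell$. This is precisely the paper's argument run in reverse: the paper first deletes every $x_i$ with $d(x_{i-1},x_{i+1})\le 2n$, and then shows that in the reduced sequence the function $j\mapsto d(x_j,p)$ has no interior maximum, so its maximum sits at $j=0$ or $j=\ell$ and is $\le k$.
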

\begin{proof}
If $d(x_{i-1},x_{i+1}) \le 2n$ for some $i$, 
then we can omit $x_i$ from the sequence
and the hypotheses will still hold.
Thus, without loss of generality,
$$d(x_{i-1},x_{i+1}) > 2n \text{ for all }i \in \{1,\dots,\ell-1\}.$$
Under this assumption,
we show
$d(x_i,p) \le k$ for {\em every} $i \in \{1,\dots,\ell-1\}$.

Fix $i \in \{1,\dots,\ell-1\}$.
The geodesics connecting the three points $x_{i-1},x_i,x_{i+1}$
form a $Y$-shaped subtree,
and the spoke ending at $x_i$ must be
the shortest of the three spokes.
It follows that
$$d(x_i,p) < \max(d(x_{i-1},p),d(x_{i+1},p)).$$
Thus largest value of $d(x_j,p)$ occurs when either $j=0$ or $j=\ell$.
In particular,
$d(x_i,p) \leq k$ for all $i \in \{1,\dots,\ell-1\}$.
\end{proof}

We now summarize our results on trains and stability in the following theorem, which follows by a simple induction argument together with Lemma \ref{lem:AngledBrackets}, Proposition \ref{prop:stable}, and Lemma \ref{lem:AlgebraTrain}.

\begin{thm}\label{thm:equivalent}
The following are equivalent:
\item[(1)] $P_\pm$ is stable at depth $n,n+1,\dots,k-1$,
\item[(2)] $\Gamma_\pm$ is stable at depth $n,n+1,\dots,k-1$.
\item[(3)] $P_{k,\pm}=\langle P_{n,\pm},TL_{k,\pm}\rangle$, and
\item[(4)] Trains from $P_{n,\pm}$ span $P_{k,\pm}$.
\end{thm}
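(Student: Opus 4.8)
The plan is to establish the four-way equivalence by reducing most of it to results already in hand and isolating the one genuinely new point. Two of the equivalences are pure bookkeeping: (1)$\Leftrightarrow$(2) is Proposition \ref{prop:stable} applied separately at each of the depths $n,n+1,\dots,k-1$, and (3)$\Leftrightarrow$(4), read at each level $m$ with $n<m\le k$, is exactly Lemma \ref{lem:AlgebraTrain} (with $m$ in place of $k$) together with its shaded analogue, while at $m=n$ both sides are trivially $P_{n,\pm}$. So the content is the equivalence of stability with the generation statement (3), and the engine is Lemma \ref{lem:AngledBrackets}, which says $P_\pm$ is stable at depth $m$ precisely when $P_{m+1,\pm}=\langle P_{m,\pm},TL_{m+1,\pm}\rangle$.

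For (1)$\Rightarrow$(3) I would induct on $k$. The base case $k=n+1$ is exactly Lemma \ref{lem:AngledBrackets} at depth $n$. For the inductive step, assume $P_\pm$ is stable at depths $n,\dots,k-1$; the inductive hypothesis applied with $k-1$ in place of $k$ gives $P_{k-1,\pm}=\langle P_{n,\pm},TL_{k-1,\pm}\rangle$, and stability at depth $k-1$ gives, via Lemma \ref{lem:AngledBrackets}, $P_{k,\pm}=\langle P_{k-1,\pm},TL_{k,\pm}\rangle$. Since $TL_{k-1,\pm}\subseteq TL_{k,\pm}$, substituting the first into the second and collapsing the nested algebras yields $P_{k,\pm}=\langle P_{n,\pm},TL_{k,\pm}\rangle$, which is (3). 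The same induction in fact produces (3) at every intermediate level.

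For the converse (3)$\Rightarrow$(1), one step is immediate: sandwiching $\langle P_{n,\pm},TL_{k,\pm}\rangle\subseteq\langle P_{k-1,\pm},TL_{k,\pm}\rangle\subseteq P_{k,\pm}$ and invoking Lemma \ref{lem:AngledBrackets} shows $P_\pm$ is stable at depth $k-1$. To get stability at the lower depths I would first descend the generation statement one level at a time. Rewrite (3) as $P_{k,\pm}=\spann(\trains_{k,\pm}(P_{n,\pm}))$ using Lemma \ref{lem:AlgebraTrain}, and apply the linear map $E_{k-1}\colon P_{k,\pm}\to P_{k-1,\pm}$ given by $\delta^{-1}$ times capping off the rightmost external strand; with respect to the right inclusion of Definition \ref{defn:BratteliDiagram} this restricts to the identity on $P_{k-1,\pm}$, so $E_{k-1}$ is onto. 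Capping the rightmost strand of a train only alters its Temperley--Lieb skeleton, producing another Temperley--Lieb diagram up to a power of $\delta$ from any closed loop created, and leaves the labelled disks and their cyclic arrangement against the distinguished interval of the output untouched; hence $E_{k-1}$ carries $\trains_{k,\pm}(P_{n,\pm})$ into $\spann(\trains_{k-1,\pm}(P_{n,\pm}))$. Therefore $P_{k-1,\pm}=E_{k-1}(P_{k,\pm})\subseteq\spann(\trains_{k-1,\pm}(P_{n,\pm}))\subseteq P_{k-1,\pm}$, i.e.\ (3)/(4) at level $k-1$; iterating down to level $n$ and then applying Lemma \ref{lem:AngledBrackets} at each depth $m\in\{n,\dots,k-1\}$ gives stability there, which with (1)$\Leftrightarrow$(2) closes the cycle. (If one instead reads (3) and (4) as asserting the displayed identities at every level $m$ with $n\le m\le k$ — the reading consistent with the range of depths appearing in (1) and (2) — then this descent is unnecessary and (3)$\Rightarrow$(1) is as short as the paragraph above, applied level by level.)

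The only step needing any real care is the claim that $E_{k-1}$ sends trains to spans of trains — equivalently, that capping an outer strand of a train is again a train. This is a short planar-isotopy check: the conceivable worry is that the cap could merge the region meeting the distinguished interval of the output disk with an interior region and so destroy the train property, but since the cap is applied at a strand away from that distinguished interval and Temperley--Lieb diagrams are closed under capping, no such merge happens. Everything else is formal manipulation of Lemmas \ref{lem:AngledBrackets} and \ref{lem:AlgebraTrain} and Proposition \ref{prop:stable}, so I expect this small diagrammatic verification to be the main (and only mild) obstacle.
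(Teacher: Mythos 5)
Your proof is correct and uses exactly the ingredients the paper cites for this theorem (Lemma \ref{lem:AngledBrackets}, Proposition \ref{prop:stable}, Lemma \ref{lem:AlgebraTrain}, and an induction on depth), so it is essentially the paper's intended argument, written out in full. The one detail you supply that the paper's ``simple induction'' leaves implicit is the descent from level $k$ to the intermediate levels in $(3)\Rightarrow(1)$ via the right conditional expectation, and your diagrammatic check that capping an outer strand of a train yields $\delta^{0\text{ or }1}$ times a train is the correct justification for that step.
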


\subsection{Trains and jellyfish}\label{sec:TrainsAndJellyfish}

\begin{lem}\label{lem:TwoStrandTrainsSpan}
Suppose $\cS_+\subset P_{n,+}$ generates $P_\bullet$ as a planar algebra. 
Then trains from $\cS_+$ span $P_+$ if and only if
$$
j^2(S)=\jellyfishSquared{S} \in \spann(\trains_{n+2,+}(\cS_+))
$$
for all $S\in\cS_+$.
\end{lem}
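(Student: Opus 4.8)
The plan is to prove both directions by passing between ``trains from $\cS_+$'' and ``trains from $P_{n,+}$,'' using the fact that $\cS_+$ generates $P_\bullet$ together with the equivalent characterizations of stability from Theorem~\ref{thm:equivalent}. The forward direction is the easy one: if trains from $\cS_+$ span $P_+$, then in particular $j^2(S)\in P_{n+2,+}=\spann(\trains_{n+2,+}(\cS_+))$, so there is nothing to prove beyond noting that $j^2(S)$ genuinely lies in $P_{n+2,+}$. (One should remark that $j^2(S)$ is the two-click rotation of $S$ capped off appropriately into $P_{n+2,+}$, so it is a legitimate element there.)

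For the reverse direction, suppose $j^2(S)\in\spann(\trains_{n+2,+}(\cS_+))$ for all $S\in\cS_+$. The goal is to show $P_{k,+}=\spann(\trains_{k,+}(\cS_+))$ for all $k\ge n$. First I would handle $k=n$ and $k=n+1$ more or less by hand: since $\cS_+$ generates $P_\bullet$ as a planar algebra, every element of $P_{n,+}$ and $P_{n+1,+}$ is a linear combination of closed-up products of generators, and a short argument (pulling a single generator to the boundary, or simply observing low-depth diagrams are already in train form after isotopy) shows trains suffice in these degrees; in fact $P_{n,+}$ and $P_{n+1,+}$ are spanned by $0$- and $1$-car trains together with Temperley--Lieb. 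The crux is the inductive step $k\rightsquigarrow k+1$ for $k\ge n+1$. Here I would argue that, given the hypothesis on $j^2(S)$, trains from $\cS_+$ span $P_{k,+}$ for all $k$ is equivalent (via $\cS_+$ generating and Lemma~\ref{lem:AlgebraTrain}) to trains from $P_{n,+}$ spanning $P_{k,+}$, which by Theorem~\ref{thm:equivalent} is equivalent to stability at all depths $\ge n$. Concretely: an arbitrary element of $P_{k+1,+}$ is a sum of diagrams built from generators; using isotopy one arranges each generator with its distinguished interval facing a chosen boundary region, and the only obstruction to being a train is a generator ``buried'' behind strands — but $j^2(S)\in\spann(\trains_{n+2,+}(\cS_+))$ is exactly the local move that lets one pull such a buried generator out past the (at most two, after using Jones--Wenzl/Temperley--Lieb reductions) strands capping it, at the cost of introducing more generators, all now less buried. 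An innermost-disk / induction-on-number-of-crossings argument then terminates.

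The main obstacle I anticipate is the bookkeeping in this ``pull a buried generator to the boundary'' step: one must check that the $j^2$ relation really is sufficient, i.e.\ that after isotopy any generator in a closed diagram can be brought into a position where it is capped off by a $2n$-strand box that is itself two strands deep, so that one application of the $j^2(S)$ relation (suitably rotated and tensored with identity strands and Temperley--Lieb diagrams on the outside) applies. This requires knowing that $j(S)$ relations follow from $j^2(S)$ relations in the relevant sense, or arguing directly that two clicks of rotation always suffice to expose a generator — which is where the hypothesis ``$\cS_+\subset P_{n,+}$'' (single depth) and the structure of trains (every generator's distinguished interval already meets the region meeting the output disk) are used. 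I would also want to double-check the base of the induction interacts correctly with Lemma~\ref{lem:FourierTransform} if one prefers to route the argument through the dual side, though I expect the direct approach above to be cleaner and self-contained.
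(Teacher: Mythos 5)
Your argument is essentially the paper's: the ``only if'' direction is trivial, and the ``if'' direction is exactly the first part of the jellyfish algorithm of \cite{0909.4099} --- repeatedly use the $j^2(S)$ relation to pull a buried generator toward the region meeting the outer distinguished interval, accept that each application may introduce more generators, and argue the process terminates in a linear combination of trains. The paper gives no more detail than this, so your ``concrete'' paragraph is the proof. Three cautions, none fatal. First, drop the proposed detour through Lemma \ref{lem:AlgebraTrain} and Theorem \ref{thm:equivalent}: those statements concern trains from the \emph{entire} box space $P_{n,+}$ and are equivalent to principal graph stability, whereas ``trains from $\cS_+$ span'' is a priori stronger (a car of a train from $P_{n,+}$, rewritten in terms of $\cS_+$-labelled tangles, need not be a train from $\cS_+$ --- that rewriting is precisely what the lemma is asserting is possible). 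Your direct argument does not use this equivalence, so nothing is lost. Second, termination is not ``induction on the number of crossings'' --- a planar tangle has no crossings, and the number of generators genuinely grows; the correct measure (as in \cite{0909.4099}) is a well-ordering on, say, the multiset of distances from each generator's distinguished region to the outer distinguished region, which an application of the relation strictly decreases even as the multiset gets larger. Third, the worry about whether $j^2$ suffices to expose a generator is resolved by parity of shading: every $S\in\cS_+$ has its distinguished interval in an unshaded region, and the outer distinguished region of $P_{k,+}$ is unshaded, so the separating distance is always even and the two-strand move applies.
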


\begin{proof}
The ``only if" direction is trivial. 
The ``if" direction is the first part of the jellyfish algorithm from Section 4 of \cite{0909.4099}. 
Suppose $\cS_+$ satisfies the above \emph{jellyfish relations}.
Given an element of $P_{k,+}$ that is a tangle labeled by elements of $\cS_+$,
we use the jellyfish relation to pull a copy of $S\in\cS_+$ closer to
the region that touches the distinguished interval of the outside boundary. 
This will typically give a linear combination of labeled tangles
that contain more elements $S\in\cS_+$.
Nevertheless,
the algorithm terminates with an element of $\spann(\trains_{k,+}(\cS_+))$.
\end{proof}

\begin{lem}\label{lem:OneStrandTrainsSpan}
Suppose $\cS_+ \subset P_{n,+}$,
$\cS_- \in P_{n,-}$,
and $\cS = \cS_+ \cup \cS_-$ generates $P_\bullet$ as a planar algebra.
Then trains from $\cS$ span $P_\bullet$ if and only if
$$
j(S_\pm)= \jellyfish{S_\pm} \in \spann(\trains_{n+1,\mp}(\cS_\mp))
$$
for all $S_\pm \in \cS_\pm$.
\end{lem}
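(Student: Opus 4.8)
The plan is to reduce this statement to the combination of Lemma \ref{lem:TwoStrandTrainsSpan} and Lemma \ref{lem:FourierTransform}, which are precisely the one-sided ($+$ only) and Fourier-transform ingredients. As with Lemma \ref{lem:TwoStrandTrainsSpan}, the ``only if'' direction is immediate: if trains from $\cS$ span $P_\bullet$, then in particular each $j(S_\pm)\in P_{n+1,\mp}$ lies in $\spann(\trains_{n+1,\mp}(\cS))$; but since $j(S_\pm)$ is a one-car diagram whose single generator is a rotation of an element of $\cS_\pm\subset P_{n,\pm}$, only cars from $\cS_\mp$ can appear in shaded/unshaded-compatible trains at that depth, giving the stated containment in $\spann(\trains_{n+1,\mp}(\cS_\mp))$. (I would phrase this parity bookkeeping carefully; it is the mirror of the ``only if'' remark already made.)

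For the ``if'' direction, suppose the one-strand relations $j(S_\pm)\in\spann(\trains_{n+1,\mp}(\cS_\mp))$ hold for all $S_\pm\in\cS_\pm$. The first step is to upgrade these to two-strand relations: applying $j$ to both sides of $j(S_+)\in\spann(\trains_{n+1,-}(\cS_-))$ and using that each car in a train on the right is an element of $\cS_-\subset P_{n,-}$ whose own $j$ is controlled by the hypothesis $j(S_-)\in\spann(\trains_{n+1,+}(\cS_+))$, one obtains
$$
j^2(S_+)\in\spann(\trains_{n+2,+}(\cS_+))
$$
for all $S_+\in\cS_+$, and symmetrically $j^2(S_-)\in\spann(\trains_{n+2,-}(\cS_-))$. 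Concretely, I would note that applying the one-click rotation to a train turns each car $S$ into $j(S)$ attached to a Temperley--Lieb arc (exactly the picture in the proof of Lemma \ref{lem:FourierTransform}), so rotating a train from $\cS_-$ and then substituting the expansion of each $j(S_-)$ as a train from $\cS_+$ yields a train from $\cS_+$; no new difficulty beyond keeping track of the Temperley--Lieb part. Now Lemma \ref{lem:TwoStrandTrainsSpan} applies to $\cS_+$ (which generates $P_\bullet$, hence certainly generates after we also throw in $\cS_-$, but we only need that $\cS_+$ together with Temperley--Lieb and the jellyfish relations suffices — I should check the hypothesis of Lemma \ref{lem:TwoStrandTrainsSpan} is ``$\cS_+$ generates $P_\bullet$,'' which here we get from ``$\cS=\cS_+\cup\cS_-$ generates'' only after an extra argument; see the obstacle below). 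Granting that, trains from $\cS_+$ span $P_+$, and by Lemma \ref{lem:FourierTransform} trains from $P_{n+1,-}\supseteq$ (rotations of) the relevant pieces span $P_-$; more directly, I would instead run the two-strand jellyfish evaluation algorithm of Lemma \ref{lem:TwoStrandTrainsSpan} on both $\cS_+$ and $\cS_-$ separately using the $j^2$ relations just derived, concluding trains from $\cS_\pm$ span $P_\pm$, and hence trains from $\cS$ span $P_\bullet$.

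The main obstacle I anticipate is the hypothesis mismatch: Lemma \ref{lem:TwoStrandTrainsSpan} wants ``$\cS_+$ generates $P_\bullet$ as a planar algebra,'' whereas here only $\cS=\cS_+\cup\cS_-$ is assumed to generate. The fix is to observe that the one-strand relation $j(S_-)\in\spann(\trains_{n+1,+}(\cS_+))$ exhibits every generator $S_-\in\cS_-$ (up to the invertible one-click rotation) as lying in the planar subalgebra generated by $\cS_+$ and Temperley--Lieb; since Temperley--Lieb lies in any subfactor planar algebra, $\cS_-$ is in the planar algebra generated by $\cS_+$, so $\cS_+$ alone generates $P_\bullet$. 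With that established, the two applications of Lemma \ref{lem:TwoStrandTrainsSpan} go through, and the only remaining care is the routine parity/region bookkeeping showing that the trains produced genuinely have all cars drawn from $\cS_\mp$ at the appropriate shading — which is exactly the content already implicit in the ``only if'' direction and in the statement of Lemma \ref{lem:FourierTransform}.
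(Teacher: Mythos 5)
Your proposal is correct, but it takes a genuinely different route from the paper. The paper's own proof is a one-line reference back to Lemma \ref{lem:TwoStrandTrainsSpan}: one runs the same jellyfish evaluation algorithm directly, using the one-strand relations to pull a generator past a single strand at a time, flipping the shading (and hence which of $\cS_+,\cS_-$ supplies the replacement cars) at each step until every generator reaches the outer region. You instead reduce to the two-strand lemma: you upgrade the $j$-relations to $j^2$-relations by rotating a train and substituting (this is exactly the parenthetical remark in the introduction that one-strand relations imply two-strand relations, and it does require the small check that plugging a train into a car of a train yields a train); you repair the hypothesis mismatch correctly, since capping off the outermost strand of $j(S_\mp)$ over the top recovers $\delta S_\mp$, so each of $\cS_+$ and $\cS_-$ separately generates $P_\bullet$; and you then apply Lemma \ref{lem:TwoStrandTrainsSpan} and its shading-reversed mirror to conclude that trains from $\cS_\pm$ span $P_\pm$, which is the desired statement because the shading of the distinguished intervals forces $\trains_{k,\pm}(\cS)=\trains_{k,\pm}(\cS_\pm)$. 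Your route is more modular but needs these extra verifications; the paper's route is shorter and avoids the detour through $j^2$. One caution: your first suggestion of finishing the minus side via Lemma \ref{lem:FourierTransform} would only control $P_{k,-}$ for $k>n$ and would produce trains from $j(\cS_+)$ rather than from $\cS_-$, so your fallback of running the two-strand argument on $\cS_-$ directly is the right choice.
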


\begin{proof}
This is similar to the proof of Lemma \ref{lem:TwoStrandTrainsSpan}.
\end{proof}

\begin{prop}\label{prop:JellyfishStability}
Suppose $P_\bullet$ is generated as a planar algebra by $P_{n,+}$.
Then
\begin{enumerate}
\item[(1)] If $(\Gamma_+,\Gamma_-)$ is stable at depth $n$, then $(\Gamma_+,\Gamma_-)$ is stable at depth $k$ for all $k\geq n$.
\item[(2)] If $\Gamma_+$ is stable at depths $n$ and $n+1$, then $(\Gamma_+,\Gamma_-)$ is stable at depth $k$ for all $k\geq n+1$.
\end{enumerate}
\end{prop}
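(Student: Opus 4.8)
The plan is to assemble the proposition from the equivalences in Theorem~\ref{thm:equivalent}, the two jellyfish spanning lemmas (Lemmas~\ref{lem:TwoStrandTrainsSpan} and~\ref{lem:OneStrandTrainsSpan}), and the Fourier transform lemma (Lemma~\ref{lem:FourierTransform}); essentially no new computation is needed. In both parts I would take $\cS_\pm = P_{n,\pm}$, so that $\cS_+$ --- hence $\cS = \cS_+ \cup \cS_-$ --- generates $P_\bullet$ by hypothesis. I would also record the elementary observation that every car of a train lying in $P_{k,+}$ has its distinguished interval on the single region abutting the distinguished interval of the output disk, which is unshaded; since a box in $P_{n,-}$ has its distinguished interval on a shaded region, no such box can occur, so $\trains_{k,+}(\cS) = \trains_{k,+}(P_{n,+})$, and dually $\trains_{k,-}(\cS) = \trains_{k,-}(P_{n,-})$.

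For part~(1), I would first rewrite the hypothesis using Theorem~\ref{thm:equivalent} (with $k = n+1$): stability of $\Gamma_\mp$ at depth $n$ is exactly $P_{n+1,\mp} = \spann(\trains_{n+1,\mp}(P_{n,\mp}))$. Consequently, for every $S_\pm \in \cS_\pm$ one has $j(S_\pm) \in P_{n+1,\mp} = \spann(\trains_{n+1,\mp}(\cS_\mp))$ automatically, so the one-strand jellyfish hypothesis of Lemma~\ref{lem:OneStrandTrainsSpan} is satisfied and trains from $\cS$ span $P_\bullet$. By the observation above this reads $P_{k,\pm} = \spann(\trains_{k,\pm}(P_{n,\pm}))$ for all $k \geq n$, and Theorem~\ref{thm:equivalent} then gives that $(\Gamma_+,\Gamma_-)$ is stable at depth $k$ for every $k \geq n$.

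For part~(2) there is no information about $\Gamma_-$ at depth $n$, so I would use the one-shading Lemma~\ref{lem:TwoStrandTrainsSpan} instead. By Theorem~\ref{thm:equivalent} (with $k = n+2$), stability of $\Gamma_+$ at depths $n$ and $n+1$ means $P_{n+2,+} = \spann(\trains_{n+2,+}(P_{n,+}))$; since $j^2(S)$ lies in $P_{n+2,+}$ for each $S \in P_{n,+}$, the two-strand jellyfish relation $j^2(S) \in \spann(\trains_{n+2,+}(\cS_+))$ holds for free. Hence Lemma~\ref{lem:TwoStrandTrainsSpan} shows trains from $P_{n,+}$ span $P_+$, and Theorem~\ref{thm:equivalent} makes $\Gamma_+$ stable at every depth $k \geq n$. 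Finally, since trains from $P_{n,+}$ span $P_{k,+}$ for all $k > n$, Lemma~\ref{lem:FourierTransform} gives that trains from $P_{n+1,-}$ span $P_{k,-}$ for all $k > n$, i.e.\ $P_{k,-} = \spann(\trains_{k,-}(P_{n+1,-}))$ for all $k \geq n+1$; applying Theorem~\ref{thm:equivalent} with $n$ replaced by $n+1$ then shows $\Gamma_-$ is stable at every depth $k \geq n+1$. Thus $(\Gamma_+,\Gamma_-)$ is stable at depth $k$ for all $k \geq n+1$.

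The argument is essentially bookkeeping, and the one place to be careful is the interplay between two kinds of statements: Theorem~\ref{thm:equivalent} only equates a \emph{finite} stability range $n,\dots,k-1$ with a single spanning identity in degree $k$, whereas the jellyfish lemmas upgrade a spanning identity in one or two bottom degrees to spanning in \emph{all} degrees --- and it is this upgrade that lets us run the stability range out to infinity. The remaining subtlety is Lemma~\ref{lem:FourierTransform}'s hypothesis $k > n$, which is exactly what prevents part~(2) from asserting stability of $\Gamma_-$ at depth $n$ itself, explaining the ``$k \geq n+1$'' in its conclusion.
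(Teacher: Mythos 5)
Your proposal is correct and follows essentially the same route as the paper's own proof: in both parts the hypothesis is converted via Theorem~\ref{thm:equivalent} into a single spanning identity ($P_{n+1,\mp}=\spann(\trains_{n+1,\mp}(P_{n,\mp}))$, resp.\ $P_{n+2,+}=\spann(\trains_{n+2,+}(P_{n,+}))$), which makes the jellyfish hypotheses of Lemmas~\ref{lem:OneStrandTrainsSpan} and~\ref{lem:TwoStrandTrainsSpan} hold automatically, and then Theorem~\ref{thm:equivalent} and (for $\Gamma_-$ in part~(2)) Lemma~\ref{lem:FourierTransform} are applied exactly as in the paper. Your extra bookkeeping about the shading of train cars and the $k>n$ hypothesis of Lemma~\ref{lem:FourierTransform} is correct and, if anything, slightly more careful with the $\pm/\mp$ conventions than the printed proof.
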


\begin{proof}
\item[(1)]
The first statement is \cite[Proposition 4.2]{MR1334479}.
In our terminology,
the proof is as follows.
Suppose $(\Gamma_+,\Gamma_-)$ is stable at depth $n$.
If $S\in P_{n,\pm}$ then,
by Theorem \ref{thm:equivalent},
$$
j(S) \in P_{n+1,\pm}  =\langle P_{n,\pm}, TL_{n+1,\pm} \rangle =\spann(\trains_{n+1,\pm}(P_{n,\pm})).
$$
By Lemma \ref{lem:OneStrandTrainsSpan},
trains from $P_{n,\pm}$ span $P_\pm$,
so again by Theorem \ref{thm:equivalent},
$(\Gamma_+,\Gamma_-)$ is stable at depth $k$ for all $k\geq n$.

\item[(2)] Suppose $\Gamma_+$ is stable at depths $n$ and $n+1$.
If $S \in P_{n,+}$ then,
by Theorem \ref{thm:equivalent},
$$
j^2(S)\in P_{n+2,+} = \langle P_{n,+}, TL_{n+2,+} \rangle = \spann(\trains_{n+2,+}(P_{n,+})).
$$
By Lemma \ref{lem:TwoStrandTrainsSpan}, trains from $P_{n,+}$ span $P_+$, so again
by Theorem \ref{thm:equivalent},
$\Gamma_+$ is stable at depth $k$ for all $k\geq n$.

It remains to show that $\Gamma_-$ is stable at depth $k$ for all $k\geq n+1$. Since trains from $P_{n,+}$ span $P_+$, trains from $P_{n+1,-}$ span $P_-$ by Lemma \ref{lem:FourierTransform}. Once more, by Theorem \ref{thm:equivalent}, $\Gamma_-$ is stable at depth $k$ for all $k\geq n+1$.
\end{proof}

\subsection{The proof of Theorem \ref{thm:EvenStability}}\label{sec:Proof}

The proof of Popa's Principal Graph Stability Theorem has three main ingredients. First, the stability of $(\Gamma_+,\Gamma_-)$ is used in Proposition  \ref{prop:PopaSubalgebra} to construct a planar subalgebra $Q_\bullet\subset P_\bullet$ whose principal graphs $(\Lambda_+,\Lambda_-)$ are stable at all higher depths. Second, by Theorem \ref{thm:PopaStability}, the main result of \cite{MR1356624}, $\Lambda_\pm$ has no $A_\infty$ tails, so $Q_\bullet$ is finite depth. Finally, the graph norm argument in Theorem \ref{thm:GraphStability} (and Corollary \ref{cor:GraphStability}) shows $\Gamma_+=\Lambda_+$, so $Q_\bullet = P_\bullet$. Theorem \ref{thm:GraphStability} is distilled from the last statement in the proof of Popa's Principal Graph Stability Theorem. We provide a proof for the convenience of the reader.

Since we are proving an analogous result, we proceed in the same manner, but we will use the 1-click rotation argument from Lemma \ref{lem:FourierTransform} in a crucial way.

Recall that $\Gamma_\pm(k)$ is the truncation of $\Gamma_\pm$ to depth $k$. The first part of the following proposition is similar to \cite[Proposition 4.1]{MR1334479}.

\begin{prop}\label{prop:PopaSubalgebra}
Suppose $P_\bullet$ is a subfactor planar algebra, and fix $n\geq 0$. Let $Q_\bullet$ be the planar subalgebra generated by $P_{n,+}$. Let $(\Lambda_+,\Lambda_-)$ be the principal and dual principal graph of $Q_\bullet$, and note that $\Lambda_\pm(n)=\Gamma_\pm(n)$.
\begin{enumerate}
\item[(1)]
If $(\Gamma_+,\Gamma_-)$ is stable at depth $n$,
then $\Lambda_\pm(n+1)=\Gamma_\pm(n+1)$,
and $(\Lambda_+,\Lambda_-)$ is stable at depth $k$ for all $k\geq n$.
\item[(2)]
If $\Gamma_+$ is stable at depths $n$ and $n+1$,
then $\Lambda_+(n+2)=\Gamma_+(n+2)$,
$\Lambda_+$ is stable at depth $j$ for all $j\geq n$,
and $\Lambda_-$ is stable at depth $k$ for all $k\geq n+1$.
\end{enumerate}
\end{prop}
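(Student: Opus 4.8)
The plan is to reduce everything to the train-spanning criterion of Theorem~\ref{thm:equivalent} and then feed the resulting spanning statements into Proposition~\ref{prop:JellyfishStability} and the Fourier transform Lemma~\ref{lem:FourierTransform}. Throughout, write $Q_\bullet$ for the planar subalgebra generated by $P_{n,+}$, so that $Q_{n,+}=P_{n,+}$ and hence $\Lambda_\pm(n)=\Gamma_\pm(n)$ automatically (the truncation to depth $n$ only sees $P_{m,\pm}$ for $m\le n$, and $Q_{m,\pm}=P_{m,\pm}$ there since $P_{m,+}\subseteq P_{n,+}$ as algebras for $m\le n$ and the shaded algebras at depth $\le n$ are recovered by capping). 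The key point to keep straight is that stability of $\Gamma_\pm$ is a statement about all of $P_\bullet$, while we want to conclude stability of $\Lambda_\pm$, a statement about the subalgebra $Q_\bullet$; the bridge is that $Q_{n,+}=P_{n,+}$, so the \emph{same} generating set sits inside both, and ``trains from $P_{n,+}$'' means the same thing whether computed in $P_\bullet$ or in $Q_\bullet$.

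For part~(1): assume $(\Gamma_+,\Gamma_-)$ is stable at depth $n$. By Theorem~\ref{thm:equivalent} applied in $P_\bullet$ (with $k=n+1$), this gives $P_{n+1,\pm}=\langle P_{n,\pm},TL_{n+1,\pm}\rangle=\spann(\trains_{n+1,\pm}(P_{n,\pm}))$. Since $Q_\bullet$ is generated by $P_{n,+}$ and $P_{n,-}$ lies in the planar algebra generated by $P_{n,+}$ (it is obtained by a one-click rotation / capping), Lemma~\ref{lem:OneStrandTrainsSpan} applies with $\cS_\pm=P_{n,\pm}$ and shows trains from $P_{n,\pm}$ span $Q_\pm$ in all degrees $\ge n$. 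Now $\trains_{k,\pm}(P_{n,\pm})\subseteq Q_{k,\pm}$, so $Q_{k,\pm}=\spann(\trains_{k,\pm}(P_{n,\pm}))=\langle P_{n,\pm},TL_{k,\pm}\rangle$ for all $k\ge n$, and Theorem~\ref{thm:equivalent} (now applied inside $Q_\bullet$) gives that $\Lambda_\pm$ is stable at every depth $k\ge n$. Finally $\Lambda_\pm(n+1)=\Gamma_\pm(n+1)$: both graphs agree at depth $\le n$ and are stable at depth $n$, and stability at depth $n$ pins down the edges and depth-$(n+1)$ vertices from the data at depth $n$ via the Bratteli-diagram description (Definition~\ref{defn:BratteliDiagram}); alternatively, $Q_{n+1,\pm}=\langle P_{n,\pm},TL_{n+1,\pm}\rangle=P_{n+1,\pm}$ by stability, so the depth-$(n+1)$ structure is literally the same.

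For part~(2): assume $\Gamma_+$ is stable at depths $n$ and $n+1$. Theorem~\ref{thm:equivalent} in $P_\bullet$ gives $P_{n+2,+}=\langle P_{n,+},TL_{n+2,+}\rangle=\spann(\trains_{n+2,+}(P_{n,+}))$, which says exactly that $j^2(S)\in\spann(\trains_{n+2,+}(P_{n,+}))$ for every $S\in P_{n,+}$; by Lemma~\ref{lem:TwoStrandTrainsSpan}, trains from $P_{n,+}$ span $Q_+$. As in part~(1), this upgrades to $Q_{j,+}=\langle P_{n,+},TL_{j,+}\rangle$ for all $j\ge n$, so $\Lambda_+$ is stable at all depths $j\ge n$; in particular $Q_{n+2,+}=P_{n+2,+}$, giving $\Lambda_+(n+2)=\Gamma_+(n+2)$. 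For the dual side, apply Lemma~\ref{lem:FourierTransform}: since trains from $P_{n,+}$ span $P_{k,+}$ for every $k>n$ (equivalently $Q_{k,+}=P_{k,+}$, so trains span $Q_{k,+}$), trains from $P_{n+1,-}$ span $Q_{k,-}$ for all $k>n$, i.e.\ for all $k\ge n+1$. Hence $Q_{k,-}=\langle P_{n+1,-},TL_{k,-}\rangle$ for $k\ge n+1$, and Theorem~\ref{thm:equivalent} inside $Q_\bullet$ shows $\Lambda_-$ is stable at depth $k$ for all $k\ge n+1$.

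The main obstacle is bookkeeping rather than a genuine mathematical difficulty: one must be careful that Theorem~\ref{thm:equivalent} is being invoked in the correct ambient planar algebra each time (in $P_\bullet$ to extract the train-spanning \emph{hypothesis} from stability of $\Gamma$, and in $Q_\bullet$ to convert train-spanning back into stability of $\Lambda$), and that the inclusion $\trains_{k,\pm}(P_{n,\pm})\subseteq Q_{k,\pm}$ makes the spanning statements transfer from $P_\bullet$ to $Q_\bullet$. The only slightly delicate point is the equality of truncations $\Lambda_\pm(n+1)=\Gamma_\pm(n+1)$ (resp.\ $\Lambda_+(n+2)=\Gamma_+(n+2)$), which I would handle by observing that stability forces $Q_{n+1,\pm}=P_{n+1,\pm}$ (resp.\ $Q_{n+2,+}=P_{n+2,+}$) as subalgebras of $P_{n+1,\pm}$ (resp.\ $P_{n+2,+}$), so the relevant portions of the two Bratteli diagrams coincide; note we do \emph{not} claim $\Lambda_-(n+2)=\Gamma_-(n+2)$ in part~(2), only stability of $\Lambda_-$ from depth $n+1$ on, which is all that is needed downstream.
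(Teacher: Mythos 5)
Your proposal follows essentially the same route as the paper: the paper's proof is exactly the chain [stability of $\Gamma_\pm$ in $P_\bullet$] $\Rightarrow$ [train-spanning of $P_{n+1,\pm}$, resp.\ $P_{n+2,+}$, via Theorem \ref{thm:equivalent}] $\Rightarrow$ [$Q_{n+1,\pm}=P_{n+1,\pm}$, resp.\ $Q_{n+2,+}=P_{n+2,+}$, hence equality of truncations] $\Rightarrow$ [apply Proposition \ref{prop:JellyfishStability} to $Q_\bullet$], and your argument simply inlines the proof of Proposition \ref{prop:JellyfishStability}, i.e.\ Lemmas \ref{lem:OneStrandTrainsSpan}, \ref{lem:TwoStrandTrainsSpan}, and \ref{lem:FourierTransform} together with Theorem \ref{thm:equivalent}.

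One sentence in your part (2) overreaches. You invoke Lemma \ref{lem:FourierTransform} under the hypothesis that ``trains from $P_{n,+}$ span $P_{k,+}$ for every $k>n$ (equivalently $Q_{k,+}=P_{k,+}$).'' Neither statement is available at this stage for $k>n+2$: the hypothesis of part (2) only yields train-spanning of $P_{k,+}$ for $k\le n+2$, and $Q_{k,+}=P_{k,+}$ for all $k$ is essentially the conclusion of Theorem \ref{thm:EvenStability}, which still requires the finiteness (Theorem \ref{thm:PopaStability}) and graph-norm (Corollary \ref{cor:GraphStability}) arguments downstream --- assuming it here would be circular. What you do know is that trains from $Q_{n,+}$ span $Q_{k,+}$ for all $k$, so the correct move (and the paper's, via Proposition \ref{prop:JellyfishStability}(2)) is to apply Lemma \ref{lem:FourierTransform} to the subfactor planar algebra $Q_\bullet$ itself, concluding that trains from $Q_{n+1,-}$ span $Q_{k,-}$ for $k>n$; this suffices, since stability of $\Lambda_-$ at depths $\ge n+1$ is then read off from Theorem \ref{thm:equivalent} inside $Q_\bullet$ (and in fact $Q_{n+1,-}=P_{n+1,-}$, being the one-click rotation of $Q_{n+1,+}=P_{n+1,+}$). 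With that adjustment your proof is complete and matches the paper's.
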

\begin{proof}
\item[(1)]
Since $(\Gamma_+,\Gamma_-)$ is stable at depth $n$, by Proposition \ref{prop:stable},
$$P_{n+1,\pm}=\spann(\trains_{n+1,\pm}(P_{n,\pm}))=Q_{n+1,\pm},$$
and thus  $\Lambda_\pm(n+1)=\Gamma_\pm(n+1)$. 
Since $Q_\bullet$ is generated by $Q_{n,+}=P_{n,+}$ and $(\Lambda_+,\Lambda_-)$ is stable at depth $n$, trains from $Q_{n,\pm}$ span $Q_{\pm}$, and $\Lambda_\pm$ is stable at depth $k$ for all $k\geq n$ by Proposition \ref{prop:JellyfishStability}.
\item[(2)]
Since $\Gamma_+$ is stable at depths $n$ and $n+1$, by Theorem \ref{thm:equivalent},
$$P_{n+2,+}=\langle P_{n,+},TL_{n+2,+}\rangle=\spann(\trains_{n+2,+}(P_{n,+}))=Q_{n+2,+},$$
and thus $\Lambda_+(n+2)=\Gamma_+(n+2)$. 
Since $Q_\bullet$ is generated by $Q_{n,+}=P_{n,+}$ and $\Lambda_+$ is stable at depths $n$ and $n+1$, trains from $Q_{n,+}$ span $Q_{+}$,  $\Lambda_+$ is stable at depth $j$ for all $j\geq n$, and $\Lambda_-$ is stable at depth $k$ for all $k\geq n+1$
by Proposition \ref{prop:JellyfishStability}.
\end{proof}

\begin{thm}\label{thm:GraphStability}
Suppose $\Lambda$ and $\Gamma$ are
finite, connected bipartite graphs with basepoints
and have the same norm $\delta > 2$.
Suppose we have Frobenius-Perron eigenvectors $\lambda$ and $\gamma$ for $\Lambda$ and $\Gamma$ respectively and there is some $n\geq 1$ such that
\begin{itemize}
\item $\Lambda(n)=\Gamma(n)\neq A_{n+1}$,
\item $\lambda|_{\Lambda(n)}=\gamma|_{\Gamma(n)}$, and
\item $\Lambda$ is stable at depth $k$ for all $k\geq n$.
\end{itemize} 
Then $\Lambda=\Gamma$.
\end{thm}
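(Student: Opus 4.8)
The plan is to show $\Gamma$ cannot be strictly larger than $\Lambda$ by a Frobenius–Perron norm argument: if $\Lambda \subsetneq \Gamma$, then extending the eigenvector $\lambda$ past $\Lambda(n)$ using the stability of $\Lambda$ produces a subinvariant (or superinvariant) vector for $\Gamma$ of the same eigenvalue $\delta$, forcing $\lambda$ and $\gamma$ to agree everywhere, hence $\Lambda=\Gamma$.

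First I would set up the key consequence of stability. Since $\Lambda$ is stable at depth $k$ for all $k\geq n$, the graph $\Lambda$ is obtained from $\Lambda(n)$ by attaching type-$A$ tails, and the adjacency action on the Frobenius–Perron eigenvector $\lambda$ is completely determined: for a vertex $v$ at depth $k\geq n$ on such a tail, the eigenvalue equation $\delta\lambda(v) = \lambda(v^-) + \lambda(v^+)$ (with $v^-$ the unique neighbor one step closer to the basepoint and $v^+$ the unique neighbor one step farther, if any) is a linear recursion. So $\lambda$ restricted to the tails is governed entirely by its values on $\Lambda(n)=\Gamma(n)$ together with the hypothesis $\lambda|_{\Lambda(n)}=\gamma|_{\Gamma(n)}$. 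Next I would compare this with $\Gamma$. Both graphs share the truncation $\Gamma(n)$, and the eigenvector values there agree. Consider a vertex $v$ of $\Lambda$ at depth exactly $n$; in $\Gamma$ it may have extra neighbors at depth $n+1$ beyond those in $\Gamma(n)=\Lambda(n)$. Then $\delta\gamma(v) = \sum_{w\sim_\Gamma v}\gamma(w) \geq \sum_{w\sim_\Lambda v}\gamma(w)$, where the latter sum, using $\gamma|_{\Gamma(n)}=\lambda|_{\Lambda(n)}$ and the recursion defining $\lambda$ on the tails, can be pushed to show $\gamma$ agrees with $\lambda$ (extended by the $\Lambda$-recursion) at least as far as the tails of $\Lambda$ reach — and any genuine extra branching in $\Gamma$ would make $\delta$ strictly exceed the norm of $\Lambda$, a contradiction since $\|\Lambda\|=\|\Gamma\|=\delta$.

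More carefully, the argument I would run is: define a vector $\mu$ on the vertex set of $\Gamma$ by $\mu = \lambda$ on the part of $\Gamma$ that is a subgraph of (the vertices appearing in) $\Lambda$ — using that $\Lambda(n)=\Gamma(n)$ and the tails of $\Lambda$ are forced — and $\mu=0$ elsewhere. Using the recursion and the inequality above one checks $\delta\mu \leq A_\Gamma\mu$ coordinatewise, with $\mu$ nonnegative, nonzero, and $\|\Gamma\|=\delta$. By the Frobenius–Perron theorem for the connected graph $\Gamma$ this forces $\mu$ to be a genuine eigenvector, $\delta\mu = A_\Gamma\mu$, which is only possible if $\mu$ has full support and every inequality was an equality — i.e. $\Gamma$ has no vertices outside $\Lambda$ and no extra edges, so $\Gamma=\Lambda$ and $\gamma=\lambda$. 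The hypothesis $\Lambda(n)=\Gamma(n)\neq A_{n+1}$ is what rules out the degenerate case where $\Gamma(n)$ is a straight $A$-segment and the "attach a tail" comparison becomes vacuous or the Frobenius–Perron vector on $\Gamma(n)$ fails to pin things down (the norm-$<2$ type behavior), and $\delta>2$ is what guarantees the recursion on the tails is the hyperbolic one, so the eigenvector values on an infinite $A_\infty$ tail would not be square-summable — preventing $\Gamma$ from secretly being infinite.

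The main obstacle I anticipate is the bookkeeping of exactly how $\lambda$ propagates along the tails of $\Lambda$ and matching it vertex-by-vertex against the a priori unknown local structure of $\Gamma$ beyond depth $n$: one must handle the possibility that $\Gamma$ branches, lengthens, or closes up a tail differently than $\Lambda$ does, and show each such deviation contradicts either connectedness, the equality of norms, or positivity of the Frobenius–Perron eigenvector. The clean way to package all of this is the subinvariance inequality $\delta\mu\leq A_\Gamma\mu$ plus the rigidity clause of Frobenius–Perron (equality forced, full support forced), so I would aim to phrase the whole proof around producing that one inequality and then invoking Frobenius–Perron once.
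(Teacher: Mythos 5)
There is a genuine gap, and it sits exactly where you anticipated trouble: the test vector $\mu$ in your subinvariance argument is not well defined. You set $\mu=\lambda$ ``on the part of $\Gamma$ that is a subgraph of $\Lambda$,'' but the only identification of $\Lambda$ with a subgraph of $\Gamma$ available from the hypotheses is on the truncation $\Lambda(n)=\Gamma(n)$. Beyond depth $n$, $\Gamma$ is a priori arbitrary: it may branch, carry multiple edges, or join two depth-$n$ vertices to a common depth-$(n+1)$ vertex, and there is no canonical correspondence between its vertices and the $\Afinite$ tails of $\Lambda$. Producing that correspondence \emph{is} the content of the theorem, so defining $\mu$ by transporting $\lambda$ along it is circular. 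Moreover, even granting some embedding of the tails, the inequality $\delta\mu\leq A_\Gamma\mu$ at a tail vertex $v$ requires every $\Lambda$-neighbor of $v$ to be a $\Gamma$-neighbor, which is again the unproved correspondence; and your fallback appeal to ``extra branching would make $\delta$ exceed $\|\Lambda\|$'' invokes strict monotonicity of graph norms, which needs one graph to actually contain the other. (Your reading of the side hypotheses is also off: $\Gamma$ is assumed finite, so $\delta>2$ is not there to exclude infinite tails.)

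What is missing is the quantitative local estimate that makes the comparison work, namely that along an $\Afinite$ tail $a_1,\dots,a_k$ of $\Lambda$ one has $\delta\lambda(a_{i+1})<2\lambda(a_i)$; this is where $\delta>2$ enters. The paper's proof is a local induction built on this: fix a depth-$n$ vertex $a_1$, note that the sum of $\gamma$ over its depth-$(n+1)$ neighbors in $\Gamma$ (with multiplicity) must equal $\lambda(a_2)$, and observe that if there were two such neighbors (or one with a double edge) then some $b$ would satisfy $\gamma(b)\leq\lambda(a_2)/2$, whence $\delta\gamma(b)\leq\delta\lambda(a_2)/2<\lambda(a_1)=\gamma(a_1)$, contradicting the eigenvalue equation at $b$ since $a_1$ is a neighbor of $b$. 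This forces a unique simple edge to a vertex with the forced weight, and one walks down the tail repeating the argument, so $\Gamma$ reproduces each tail of $\Lambda$ exactly and connectedness finishes the proof. If you want to keep a Perron--Frobenius rigidity statement in your write-up, it can only be invoked \emph{after} this vertex-by-vertex matching has been established, at which point it is no longer needed.
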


\begin{proof}
Fix a vertex $a_1$ of depth exactly $n$ in $\Lambda$.

First, suppose $a_1$ has no adjacent vertices of depth $n+1$ in $\Lambda$.
Now $\delta \lambda(a_1)$ is the sum of the values of $\lambda$
over vertices adjacent to $a_1$.
But $a_1$ and all vertices adjacent to it lie in $\Lambda(n) = \Gamma(n)$,
and $\gamma = \lambda$ when restricted to $\Gamma(n)$.
Thus $a_1$ also has no adjacent vertices of depth $n+1$ in $\Gamma$.

Now suppose $a_1$ has an adjacent vertex $a_2$ of depth $n+1$ in $\Lambda$.
Since $\Lambda$ is stable at depth $n$ and higher,
$a_1$ is attached to an $\Afinite$ tail $a_1,\dots,a_k$ in $\Lambda$.
The values of $\lambda(a_i)$ for all $i$
are determined by the values of $\delta$ and $\lambda(a_k)$.
The most important property for us is
$$\delta \lambda(a_{i+1}) < 2 \lambda(a_i)$$
for $i = 1,\dots,k-1$.

Now consider the set of vertices $b$ in $\Gamma$
that are adjacent to $a_1$ and have depth $n+1$.
The sum of the values of $\gamma$ over these vertices
is equal to $\lambda(a_2)$.
If there are at least two such vertices,
or one with multiplicity at least two,
then one of them must satisfy
$$\gamma(b) \le \lambda(a_2)/2.$$
But then
$$
\delta \gamma(b) 
\leq \delta \lambda(a_2)/2
<\lambda(a_1)
= \gamma(a_1).
$$
This contradicts the fact that $\delta \gamma(b)$ is
the sum of the values of $\gamma$ over the vertices adjacent to $b$.
It follows that $a_1$ has
exactly one adjacent vertex at depth $n+1$ in $\Gamma$, 
which we name $a_2$,
and we have $\gamma(a_2) = \lambda(a_2)$.

Applying the same argument recursively
gives a path $a_1,a_2,\dots,a_k$ in $\Gamma$ where $a_2,\dots,a_{k-1}$ have valency two,
$a_k$ has valency one,
and $\gamma(a_i) = \lambda(a_i)$ for all $i$. 

Thus every vertex of depth $n$ in $\Gamma$
is attached to an $\Afinite$ tail with the same length
as the corresponding vertex in $\Lambda$.
We conclude that $\Gamma = \Lambda$.
\end{proof}


\begin{cor}\label{cor:GraphStability}
Suppose $Q_\bullet$ is a planar subalgebra of $P_\bullet$ with $\delta>2$, and let $\Lambda_+$ be the principal graph of $Q_\bullet$. Assume that
there is an $n \geq 1$ such that
\begin{itemize}
\item $\Lambda_+(n)=\Gamma_+(n)\neq A_{n+1}$, and
\item $\Lambda_+$ is finite and stable at depth $k$ for all $k\geq n$.
\end{itemize}
Then $\Lambda_+=\Gamma_+$, so $Q_\bullet=P_\bullet$.
\end{cor}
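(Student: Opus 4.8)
The plan is to obtain the corollary as an essentially immediate consequence of Theorem~\ref{thm:GraphStability}, applied to the pair $\Lambda=\Lambda_+$ and $\Gamma=\Gamma_+$: almost all of the work is the graph-theoretic content of that theorem, so what remains is to install its hypotheses in the subfactor setting and then to promote the resulting equality $\Lambda_+=\Gamma_+$ of graphs to an equality $Q_\bullet=P_\bullet$ of planar algebras.

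First I would record the standard structure. A planar subalgebra of a subfactor planar algebra is again a subfactor planar algebra (it inherits sphericality and positivity, and $\dim Q_{0,\pm}=1$ since $\dim P_{0,\pm}=1$), so $\Lambda_+$ is a connected bipartite graph with basepoint $\star$; being finite by hypothesis, $Q_\bullet$ is finite depth, so $\|\Lambda_+\|=\delta$ and the Frobenius--Perron eigenvector $\lambda$ of $\Lambda_+$ is its dimension vector, normalized by $\lambda(\star)=1$. On the $P_\bullet$ side, the dimension vector $\gamma$ of $\Gamma_+$ --- the statistical dimensions of the simple $1$-morphisms of $\cG(P_\bullet)$ --- is a strictly positive function with $\Gamma_+\gamma=\delta\gamma$ and $\gamma(\star)=1$, whether or not $\Gamma_+$ is finite; on inspection this is all that the proof of Theorem~\ref{thm:GraphStability} requires of the $\Gamma$-side, so one need not know in advance that $\Gamma_+$ is finite.

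The one substantive verification is that $\lambda$ and $\gamma$ agree on the common truncation $\Lambda_+(n)=\Gamma_+(n)$. Because the principal graphs agree up through depth $n$, the Bratteli diagrams of the chains $Q_{0,+}\subset\cdots\subset Q_{n,+}$ and $P_{0,+}\subset\cdots\subset P_{n,+}$ agree --- the Bratteli diagram up to level $m$ is reconstructed from $\Gamma_+(m)$ as in Definition~\ref{defn:BratteliDiagram}, by alternately reflecting and attaching new parts --- so $\dim Q_{n,+}=\dim P_{n,+}$; combined with $Q_{n,+}\subseteq P_{n,+}$ this forces $Q_{n,+}=P_{n,+}$, and likewise $Q_{m,+}=P_{m,+}$ for all $m\le n$. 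Since the modulus-$\delta$ Markov trace of $P_\bullet$ restricts to that of $Q_\bullet$, a minimal projection $p_v$ in the summand indexed by a depth-$m$ vertex $v$ has $\lambda(v)=\delta^{m}\tr(p_v)=\gamma(v)$. I expect this identification --- that the von Neumann/dimension data below depth $n$ is already forced by $\Gamma_+(n)$ and $\delta$ --- to be the only step needing genuine care; the rest is bookkeeping.

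With this in hand, the argument of Theorem~\ref{thm:GraphStability} applies: $\Lambda_+(n)=\Gamma_+(n)\neq A_{n+1}$, $\lambda|_{\Lambda_+(n)}=\gamma|_{\Gamma_+(n)}$, $\Lambda_+$ is finite and stable at depth $k$ for all $k\ge n$, and (as noted) the proof uses only that $\Lambda_+$ is finite and connected and that $\gamma$ is a positive $\delta$-eigenfunction on $\Gamma_+$; hence $\Lambda_+=\Gamma_+$, and in particular $\Gamma_+$ is finite. Then for every $m$ one has $\dim Q_{m,+}=\dim P_{m,+}$ (both read off the common principal graph) and $Q_{m,+}\subseteq P_{m,+}$, so $Q_{m,+}=P_{m,+}$; applying the one-click rotation, which carries $P_{m,+}$ bijectively onto $P_{m,-}$ and the planar subalgebra $Q_{m,+}$ into $Q_{m,-}$, gives $Q_{m,-}=P_{m,-}$ as well. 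Therefore $Q_\bullet=P_\bullet$.
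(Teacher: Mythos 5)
Your proposal is correct, and its overall architecture is the same as the paper's: identify the truncations and the eigenvectors up to depth $n$ via a dimension count and the restriction of the Markov trace, invoke the graph-theoretic Theorem~\ref{thm:GraphStability} to get $\Lambda_+=\Gamma_+$, and then count dimensions again to conclude $Q_\bullet=P_\bullet$. The one genuine divergence is how you handle the finiteness of $\Gamma_+$, which is needed to apply Theorem~\ref{thm:GraphStability} as literally stated. The paper disposes of this first, by an operator-algebraic argument: the depth of $P_\bullet$ is at most the depth of $Q_\bullet$, because a Pimsner--Popa basis for $Q_{q+1,+}$ over $Q_{q,+}$ is also one for $P_{q+1,+}$ over $P_{q,+}$; hence $\Gamma_+$ is finite and $\|\Gamma_+\|=\|\Lambda_+\|=\delta$, and the theorem applies verbatim. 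You instead observe that the proof of Theorem~\ref{thm:GraphStability} never uses finiteness of $\Gamma$ --- only local finiteness and a strictly positive eigenvector $\gamma$ with eigenvalue $\delta$, which the dimension vector of $\Gamma_+$ supplies unconditionally --- and you recover finiteness of $\Gamma_+$ as a conclusion rather than a hypothesis. That observation is correct (each step of the theorem's proof involves only finite sums over neighbours of a fixed vertex), so your route works; it trades the paper's short appeal to Pimsner--Popa bases for a re-inspection of the theorem's proof under weakened hypotheses. If you keep your version, you should state explicitly that you are using this strengthened form of Theorem~\ref{thm:GraphStability} rather than the theorem as stated, since as written it requires both graphs to be finite with the same norm. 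The remaining steps (agreement of Bratteli data up to depth $n$, $\lambda(v)=\delta^{m}\tr(p_v)=\gamma(v)$, and the final dimension count, where your explicit treatment of the shaded part via the one-click rotation is a harmless elaboration) match the paper's proof.
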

\begin{proof}
First, the depth of $P_\bullet$ is at most the depth of $Q_\bullet$. If $Q_\bullet$ is depth $q$, then $q+1$ parallel strings factor through $q$ parallel strings, since any Pimsner-Popa basis for $Q_{q+1,+}$ over $Q_{q,+}$ is a Pimsner-Popa basis for $P_{q+1,+}$ over $P_{q,+}$. Hence $\Gamma_+$ is finite, and $\delta=\|\Lambda_+\|=\|\Gamma_+\|$ by \cite{JonesICM}.

Since $\Lambda_+(n)=\Gamma_+(n)$, $\dim(Q_{n,+})=\dim(P_{n,+})$, as both are equal to the number of loops of length $2n$ on $\Gamma_+$ starting at $\star$. Thus $Q_{n,+}=P_{n,+}$. Since the traces agree on $Q_{n,+}$ and $P_{n,+}$, the resulting Frobenius-Perron eigenvectors on $\Lambda_+$ and $\Gamma_+$ agree up to depth $n$, and the hypotheses of Theorem \ref{thm:GraphStability} are satisfied. Thus $\Lambda_+=\Gamma_+$.

Finally, by counting dimensions once more, we have $Q_{k,+}=P_{k,+}$ for all $k\geq 0$, and thus $Q_\bullet=P_\bullet$. 
\end{proof}

We now have all the tools necessary to prove Theorem \ref{thm:EvenStability}. (One can easily deduce a proof of Popa's Principal Graph Stability Theorem \ref{thm:PrincipalGraphStability} from the proof of Theorem \ref{thm:EvenStability}.)

\begin{proof}[Proof of Theorem \ref{thm:EvenStability}]
By Proposition \ref{prop:PopaSubalgebra}, there is a planar subalgebra $Q_\bullet\subseteq P_\bullet$ with principal graphs $(\Lambda_+,\Lambda_-)$ such that $\Lambda_+(n+2)=\Gamma_+(n+2)$, $\Lambda_+$ is stable at depth $j$ for all $j\geq n$, and $\Lambda_-$ is stable at depth $k$ for all $k\geq n+1$. By Theorem \ref{thm:PopaStability}, $\Lambda_\pm$ is finite and obtained from the truncation $\Lambda_\pm(n+1)$ by adding $\Afinite$ tails. Finally, by Corollary \ref{cor:GraphStability}, $\Gamma_\pm=\Lambda_\pm$, so $Q_\bullet=P_\bullet$.
 \end{proof}

\section{Applications}\label{sec:Applications}

\subsection{Jellyfish and spokes}\label{sec:Jellyfish}

Recall that a subfactor planar algebra $P_\bullet$ is called \emph{$k$ supertransitive} if $k$ is maximal such that $TL_{k,\pm}=P_{k,\pm}$. Let $P_\bullet$ be an $(n-1)$ supertransitive subfactor planar algebra with $n<\infty$. In particular, $P_\bullet\neq TL_\bullet$. 

\begin{defn}\label{defn:TwoStrandJellyfishRelations}
We call a set $\cS_+\subset P_{n,+}$ a set of \emph{$2$-strand jellyfish generators for $P_\bullet$} if 
\begin{enumerate}
\item[(1)] (Trains span) Trains from $\cS_+$ span $P_\bullet$, and
\item[(2)] (Structure algebra) $\spann(\cS_+ \cup\{f^{(n)}\})\subseteq P_{n,+}$ is an algebra under the usual multiplication.
\end{enumerate}
\end{defn}

\begin{remark}\label{rem:ForFree}
Note that if $\cS_+$ is a set of 2-strand jellyfish generators for $P_\bullet$, then we also have
\begin{enumerate}
\item[$\bullet$] (TL-capping)
$
\begin{tikzpicture}[baseline = -.1cm]
	\draw (0,-.8)--(0,0);
	\draw (.2,0)--(.2,.8);
	\node at (.6,.6) {{\scriptsize{$n-2$}}};
	\node at (.4,-.6) {{\scriptsize{$n$}}};
	\filldraw[shaded] (0,.4) arc (0:240:.15cm);
	\draw[thick, unshaded] (0,0) circle (.4);
	\node at (0,0) {$S$};
	\node at (-.55,0) {$\star$};
\end{tikzpicture}
,
\begin{tikzpicture}[baseline = -.1cm]
	\draw (0,-.8)--(0,0);
	\filldraw[shaded] (.4,.8)--(.3,0)--(-.3,0)--(-.4,.8);
	\node at (.9,.6) {{\scriptsize{$n-2$}}};
	\node at (.4,-.6) {{\scriptsize{$n$}}};
	\filldraw[unshaded] (.2,.35) arc (0:180:.2cm);
	\draw[thick, unshaded] (0,0) circle (.4);
	\node at (0,0) {$S$};
	\node at (-.55,0) {$\star$};
\end{tikzpicture}
\in TL_{n-1,\pm}
$
for all $S\in \cS_+$.
\item[$\bullet$] (Rotational closure) For each $S\in \cS_{+}$,
$
\begin{tikzpicture}[baseline = -.1cm]
	\node at (.4,.6) {{\scriptsize{$n-2$}}};
	\node at (-.4,-.6) {{\scriptsize{$n-2$}}};
	\filldraw[shaded] (0,.8)--(0,0)--(-.2,0) arc (0:-180:.35cm) -- (-.9,.8);
	\filldraw[unshaded] (0,.8)--(0,0)--(-.3,0) arc (0:-180:.25cm) -- (-.8,.8);
	\filldraw[shaded] (0,-.8)--(0,0)--(.2,0) arc (180:0:.35cm) -- (.9,-.8);
	\filldraw[unshaded] (0,-.8)--(0,0)--(.3,0) arc (180:0:.25cm) -- (.8,-.8);
	\draw[thick, unshaded] (0,0) circle (.4);
	\node at (0,0) {$S$};
	\node at (-.55,0) {$\star$};
\end{tikzpicture}
\in \spann(\cS_+)\oplus TL_{n,+}.
$
\end{enumerate}
\end{remark}

\begin{defn}\label{defn:OneStrandJellyfishRelations}
We call a set $\cS=\cS_+\cup \cS_-$ with $\cS_\pm\subseteq P_{n,\pm}$ a set of \emph{$1$-strand jellyfish generators for $P_\bullet$} if
\begin{enumerate}
\item[(1)] (Trains span) Trains from $\cS_\pm$ span $P_\bullet$.
\item[(2)] (Structure algebra) $\spann(\cS_+ \cup\{f^{(n)}\})\subset P_{n,+}$ and $\spann(\cS_- \cup\{\check{f}^{(n)}\})\subset P_{n,-}$ are algebras under the usual multiplication.
\end{enumerate}
\end{defn}

\begin{remark}
As in Remark \ref{rem:ForFree}, if $\cS$ is a set of $1$-strand jellyfish generators, then we also have
\begin{enumerate}
\item[$\bullet$] (TL-capping)
$
\begin{tikzpicture}[baseline = -.1cm]
	\draw (0,-.8)--(0,0);
	\draw (.2,0)--(.2,.8);
	\node at (.6,.6) {{\scriptsize{$n-2$}}};
	\node at (.4,-.6) {{\scriptsize{$n$}}};
	\draw (0,.4) arc (0:240:.15cm);
	\draw[thick, unshaded] (0,0) circle (.4);
	\node at (0,0) {$S$};
	\node at (-.55,0) {$\star$};
\end{tikzpicture}
\in TL_{n-1,\pm}
$
for all $S\in \cS_\pm$.
\item[$\bullet$] (Rotational closure) For each $S\in \cS_{\pm}$,
$
\begin{tikzpicture}[baseline = -.1cm]
	\node at (.4,.6) {{\scriptsize{$n-1$}}};
	\node at (-.4,-.6) {{\scriptsize{$n-1$}}};
	\draw (0,.8)--(0,0)--(-.3,0) arc (0:-180:.25cm) -- (-.8,.8);
	\draw (0,-.8)--(0,0)--(.3,0) arc (180:0:.25cm) -- (.8,-.8);
	\draw[thick, unshaded] (0,0) circle (.4);
	\node at (0,0) {$S$};
	\node at (-.55,0) {$\star$};
\end{tikzpicture}
\in \spann(\cS_\mp)\oplus TL_{n,\mp}.
$
\end{enumerate}
\end{remark}

\begin{remark}\label{rem:OneStrandJellyfishImpliesTwoStrandJellyfish}
If $\cS=\cS_+\cup \cS_-$ is a set of $1$-strand jellyfish generators for $P_\bullet$, then $\cS_+$ is a set of $2$-strand jellyfish generators for $P_\bullet$, and $\cS_-$ is a set of $2$-strand jellyfish generators for the dual of $P_\bullet$ (obtained by reversing the shading).
\end{remark}

\begin{defn}\label{defn:spoke}
A \emph{simply laced spoke graph} is a tree with two distinguished vertices $\star$ and $c$ such that $\star$ has valence $1$ and every vertex except possibly $c$ has valence at most $2$.

In general, a \emph{spoke graph} is a graph obtained from a simply laced spoke graph by replacing some edges with multiple edges. Further, we require these multiple edges to be incident to $c$, but not include the edge from $c$ in the direction of $\star$.

For a (dual) principal graph $\Gamma$ to to be a spoke graph, we require that $\star$ be the basepoint of $\Gamma$.
\end{defn}

\begin{remark}
Since $P_\bullet$ is $n-1$ supertransitive, If $\Gamma_\pm$ is a spoke graph, then $c$ is at depth $n-1$.
\end{remark}

\begin{ex}
Some examples of finite simply laced spoke graphs are the 2221, 3311, 3333, and 4442 principal graphs:
$$
\bigraph{gbg1v1v1p1p1v1x0x0p0x1x0},\bigraph{gbg1v1v1v1p1p1v1x0x0v1},\bigraph{gbg1v1v1v1p1p1v1x0x0p0x1x0p0x0x1v1x0x0p0x1x0p0x0x1},\bigraph{gbg1v1v1v1v1p1p1v1x0x0p0x1x0p0x0x1v0x1x0p0x0x1v1x0p0x1}\,.
$$
An example of an infinite simply laced spoke graph is the $D_\infty$ principal graph
$$
\begin{tikzpicture}[baseline=-.3cm,scale=.6]
	\filldraw (0,0) circle (.05cm);
	\filldraw (1,0) circle (.05cm);
	\filldraw (2,-.3) circle (.05cm);
	\filldraw (2,.3) circle (.05cm);
	\filldraw (3,-.3) circle (.05cm);
	\filldraw (4,-.3) circle (.05cm);
	\filldraw (5,-.3) circle (.05cm);
	\node at (5.75,-.3) {$\cdots$};
	\draw[] (0,0)--(1,0)--(2,.3);
	\draw[] (1,0)--(2,-.3)--(5,-.3);
\end{tikzpicture}
$$
Examples of spoke graphs that are not simply laced are the principal graphs of fixed-point subfactors $R^{G}\subset R$ for $G$ non-abelian, e.g., $G=S_3$:
$$
\begin{tikzpicture}[baseline=0cm,scale=.6]
	\filldraw (0,0) circle (.05cm);
	\filldraw (1,0) circle (.05cm);
	\filldraw (2,-.3) circle (.05cm);
	\filldraw (2,.3) circle (.05cm);
	\node at (1.5,-.4) {{\scriptsize{$2$}}};
	\draw[] (0,0)--(1,0)--(2,.3);
	\draw[] (1,0)--(2,-.3);
\end{tikzpicture}\,.
$$
\end{ex}

\begin{thm}\label{thm:OneStrand}
Suppose $P_\bullet$ is an $(n-1)$ supertransitive subfactor planar algebra with $\delta>2$ and principal graphs $(\Gamma_+,\Gamma_-)$.
The following are equivalent.
\begin{enumerate}
\item[(1)]
$P_{n,+}\cup P_{n,-}$ is a set of $1$-strand jellyfish generators for $P_\bullet$.
\item[(2)]
$\Gamma_+$ and $\Gamma_-$ are finite spoke graphs.
\item[(3)]
$\Gamma_+(n+1)$ and $\Gamma_-(n+1)$ are spoke graphs. 
\end{enumerate}
\end{thm}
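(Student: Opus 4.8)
The plan is to funnel all three conditions through stability of the principal graphs and then feed everything into the machinery already developed in Theorem \ref{thm:equivalent} together with Popa's Principal Graph Stability Theorem \ref{thm:PrincipalGraphStability}. The key preliminary observation is that the ``structure algebra'' clause of Definition \ref{defn:OneStrandJellyfishRelations} is automatic for the choice $\cS_\pm = P_{n,\pm}$, since $\spann(P_{n,\pm}\cup\{f^{(n)}\}) = P_{n,\pm}$ is already an algebra. So statement (1) is exactly the assertion that trains from $P_{n,+}$ span $P_+$ and trains from $P_{n,-}$ span $P_-$, and by Theorem \ref{thm:equivalent} (the equivalence $(4)\Leftrightarrow(2)$, taken for all $k\ge n$) this is equivalent to
$$(\ast)\qquad \Gamma_+\ \text{and}\ \Gamma_-\ \text{are stable at depth}\ k\ \text{for all}\ k\ge n.$$
It is also worth recording once and for all that, because $P_\bullet$ is \emph{exactly} $(n-1)$-supertransitive, $\Gamma_\pm(n-1)=A_n$ while $\Gamma_\pm(n)\neq A_{n+1}$, and hence $\Gamma_\pm(n+1)\neq A_{n+2}$.

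With $(\ast)$ in hand I would prove the cycle $(1)\Rightarrow(2)\Rightarrow(3)\Rightarrow(1)$. For $(1)\Rightarrow(2)$: given $(\ast)$, the graph $\Gamma_\pm$ is built from $\Gamma_\pm(n)$ by attaching type-$A$ graphs at the depth-$n$ vertices, as recalled just before Theorem \ref{thm:PopaStability}; since $\Gamma_\pm(n-1)=A_n$, this says precisely that $\Gamma_\pm$ is a spoke graph whose central vertex is the unique depth-$(n-1)$ vertex $c$. Finiteness follows because an infinite such graph would have an $A_\infty$ tail, so Theorem \ref{thm:PopaStability} would force $\Gamma_\pm\in\{A_\infty,A_{\infty,\infty},D_\infty\}$, each of norm $2$, contradicting $\delta>2$ (equivalently, one can invoke Popa's Theorem \ref{thm:PrincipalGraphStability} directly, whose hypotheses — stability at depth $n$, $\Gamma_\pm(n+1)\neq A_{n+2}$, $\delta>2$ — are all present). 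The implication $(2)\Rightarrow(3)$ is immediate, since the truncation of a spoke graph to any depth at least that of its central vertex (here $n-1\le n+1$) is again a spoke graph.

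The one implication with content is $(3)\Rightarrow(1)$. First I would pin down the central vertex of the spoke graph $\Gamma_\pm(n+1)$: it must lie at depth exactly $n-1$, since the part of a spoke graph strictly before $c$ is a bare path, so $c$ at depth $\le n-2$ would force $\Gamma_\pm(n)=A_{n+1}$, and $c$ at depth $\ge n$ would force the same — both excluded. Reading off the definition of spoke graph then gives that $\Gamma_+$ and $\Gamma_-$ are each stable at depth $n$: multiple edges occur only at $c$, which is at depth $n-1$, so every edge between depths $n$ and $n+1$ is simple; and every vertex of depth $\ge n$ has valence $\le 2$ with one edge directed back toward $c$, so each depth-$n$ vertex has at most one neighbour at depth $n+1$ and no two of them share such a neighbour. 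Now $(\Gamma_+,\Gamma_-)$ is stable at depth $n$, $\Gamma_\pm(n+1)\neq A_{n+2}$, and $\delta>2$, so Popa's Principal Graph Stability Theorem \ref{thm:PrincipalGraphStability} upgrades this to stability at every depth $k\ge n$, i.e.\ to $(\ast)$, hence to (1).

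I expect the only genuine input to be this last application of Popa's theorem (and, for finiteness in $(1)\Rightarrow(2)$, of Theorem \ref{thm:PopaStability}); everything else is unwinding the definitions of spoke graph, stability at a depth, exact supertransitivity, and ``trains span.'' The most delicate bookkeeping point is the one flagged above — correctly locating the central vertex at depth $n-1$ and checking that the possible multiple edges near $c$ do not obstruct stability at depth $n$, since stability at depth $n$ only constrains the edges between depths $n$ and $n+1$.
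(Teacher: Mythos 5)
Your proof is correct and follows essentially the same route as the paper: reduce (1) to stability of $\Gamma_\pm$ at all depths $k\geq n$ via Theorem \ref{thm:equivalent}, and close the cycle using Popa's Principal Graph Stability Theorem together with Theorem \ref{thm:PopaStability} for finiteness. The only cosmetic difference is in $(3)\Rightarrow(1)$, where the paper passes through the planar subalgebra $Q_\bullet$ generated by $P_{n,\pm}$ (Proposition \ref{prop:PopaSubalgebra}) and concludes $Q_\bullet=P_\bullet$, whereas you apply Theorem \ref{thm:PrincipalGraphStability} directly to $(\Gamma_+,\Gamma_-)$ and then invoke Theorem \ref{thm:equivalent}; both versions rest on the same inputs, and your explicit bookkeeping (locating $c$ at depth $n-1$, checking that multiple edges at $c$ do not affect stability at depth $n$) is accurate.
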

\begin{proof}
\item[\emph{$(1)\Rightarrow (2)$:}] Since trains from $P_{n,\pm}$ span $P_\bullet$, $\Gamma_\pm$ is stable at depth $k$ for all $k\geq n$ by Theorem \ref{thm:equivalent}. By Theorem \ref{thm:PopaStability}, $\Gamma_+,\Gamma_-$ are finite.
\item[\emph{$(2)\Rightarrow (3)$:}] Trivial.
\item[\emph{$(3)\Rightarrow (1)$:}] Note that $(\Gamma_+,\Gamma_-)$ is stable at depth $n$, so let $Q_\bullet$ be the subfactor planar algebra generated by $P_{n,\pm}$ as in Proposition \ref{prop:PopaSubalgebra}, and note that trains from $P_{n,\pm}$ span $Q_\bullet$. Since $P_\bullet$ is $(n-1)$ supertransitive, $P_{n,+}\cup P_{n,-}$ is a set of 1-strand jellyfish generators for $Q_\bullet$. Finally, by Popa's Principal Graph Stability Theorem \ref{thm:PrincipalGraphStability}, $Q_\bullet=P_\bullet$.
\end{proof}

\begin{thm}\label{thm:TwoStrand}
Suppose $P_\bullet$ is an $(n-1)$ supertransitive subfactor planar algebra with $\delta>2$ and principal graph $\Gamma_+$.
The following are equivalent.
\begin{enumerate}
\item[(1)]
$P_{n,+}$ is a set of $2$-strand jellyfish generators for $P_\bullet$.
\item[(2)]
$\Gamma_+$ is a finite spoke graph, and $\Gamma_-$ is stable at depth $k$ for all $k\geq n+1$.
\item[(3)]
$\Gamma_+(n+2)$ is a spoke graph.
\end{enumerate}
\end{thm}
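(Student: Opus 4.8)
The plan is to prove Theorem~\ref{thm:TwoStrand} by mimicking the structure of the proof of Theorem~\ref{thm:OneStrand}, replacing Popa's Principal Graph Stability Theorem~\ref{thm:PrincipalGraphStability} with our strengthened version, Theorem~\ref{thm:EvenStability}, which is exactly the asymmetric input needed here (we control $\Gamma_+$ at two consecutive depths but say nothing a priori about $\Gamma_-$).

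\textit{$(1)\Rightarrow(2)$:} Assume $P_{n,+}$ is a set of $2$-strand jellyfish generators. By hypothesis (1) of Definition~\ref{defn:TwoStrandJellyfishRelations}, trains from $P_{n,+}$ span $P_\bullet$, so by Theorem~\ref{thm:equivalent} applied to $\Gamma_+$, $\Gamma_+$ is stable at depth $k$ for all $k\geq n$; by Theorem~\ref{thm:PopaStability} (the main result of \cite{MR1356624}) $\Gamma_+$ is finite. It remains to see $\Gamma_+$ is actually a spoke graph: since $P_\bullet$ is $(n-1)$ supertransitive, $\Gamma_+(n)=A_n$, i.e.\ a single segment, and stability at all depths $\geq n$ means $\Gamma_+(n)$ only grows by $\Afinite$ tails, so $\Gamma_+$ is a tree with at most one vertex (namely the one at depth $n-1$, or $n$) of valence $>2$ — a spoke graph. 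For $\Gamma_-$: trains from $P_{n,+}$ spanning $P_+$ give, via Lemma~\ref{lem:FourierTransform}, that trains from $P_{n+1,-}$ span $P_-$, hence by Theorem~\ref{thm:equivalent} $\Gamma_-$ is stable at depth $k$ for all $k\geq n+1$.

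\textit{$(2)\Rightarrow(3)$:} If $\Gamma_+$ is a finite spoke graph then in particular $\Gamma_+(n+2)$ is (the truncation of a spoke graph, with $\star$ still the basepoint, is a spoke graph), which is exactly (3).

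\textit{$(3)\Rightarrow(1)$:} This is the substantive direction. Since $\Gamma_+(n+2)$ is a spoke graph and $P_\bullet$ is $(n-1)$ supertransitive (so $\Gamma_+(n)=A_n$), the hypothesis forces $\Gamma_+$ not to merge or split and to have only simple edges between depths $n$ and $n+1$ and between $n+1$ and $n+2$; that is, $\Gamma_+$ is stable at depths $n$ and $n+1$, and $\Gamma_+(n+1)\neq A_{n+2}$ (a spoke graph truncated at $n+2$ that is $A_{n+2}$ at level $n+1$ would make $P_\bullet$ at least $n$ supertransitive, contradicting $(n-1)$-supertransitivity — this is where we use that $P_\bullet\neq TL_\bullet$ is genuinely $(n-1)$ supertransitive). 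Now Theorem~\ref{thm:EvenStability} applies with $\delta>2$ to give that $(\Gamma_+,\Gamma_-)$ is stable at all depths $\geq n+1$ and both graphs are finite; in particular $\Gamma_-$ is stable at depth $k$ for all $k\geq n+1$. Next let $Q_\bullet$ be the planar subalgebra generated by $P_{n,+}$. By Proposition~\ref{prop:PopaSubalgebra}(2), $\Lambda_+(n+2)=\Gamma_+(n+2)$, $\Lambda_+$ is stable at all depths $\geq n$, and $\Lambda_-$ is stable at all depths $\geq n+1$; and by Theorem~\ref{thm:EvenStability} (or Corollary~\ref{cor:GraphStability}) $Q_\bullet=P_\bullet$. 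Thus trains from $P_{n,+}$ span $P_\bullet=Q_\bullet$, which is condition (1) of Definition~\ref{defn:TwoStrandJellyfishRelations}. For condition (2), the structure algebra property: because $P_\bullet$ is $(n-1)$ supertransitive, the even part $P_{n,+}$ is spanned by $TL$ plus the minimal projections at depth $n$; stability at depth $n$ says $\Gamma_+$ has a unique new vertex at depth $n$, so $P_{n,+}$ is abelian modulo the Temperley--Lieb ideal in the relevant sense, and in any case $\spann(\cS_+\cup\{f^{(n)}\})=P_{n,+}$ is trivially an algebra under multiplication when we take $\cS_+=P_{n,+}$ itself. Hence $P_{n,+}$ is a set of $2$-strand jellyfish generators.

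The main obstacle is the $(3)\Rightarrow(1)$ step, and specifically two points within it: first, correctly extracting ``$\Gamma_+$ stable at depths $n$ and $n+1$ and $\Gamma_+(n+1)\neq A_{n+2}$'' from the combinatorial hypothesis that $\Gamma_+(n+2)$ is a spoke graph together with $(n-1)$-supertransitivity — one must be careful about the possibility of multiple edges at the central vertex $c$ (at depth $n-1$) versus at depth $n$, since spoke graphs allow multiplicities only at $c$, which is precisely what rules out branching or multiplicity between depths $n$ and $n+1$; and second, verifying that the conclusion of Theorem~\ref{thm:EvenStability} genuinely yields what Definition~\ref{defn:TwoStrandJellyfishRelations} demands, i.e.\ translating ``stable at all higher depths'' back into ``trains span'' via Theorem~\ref{thm:equivalent}. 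Everything else is bookkeeping that parallels the already-written proof of Theorem~\ref{thm:OneStrand}.
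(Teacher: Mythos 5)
Your proposal is correct and follows essentially the same route as the paper: trains span $\Rightarrow$ stability via Theorem~\ref{thm:equivalent} plus Lemma~\ref{lem:FourierTransform} and Theorem~\ref{thm:PopaStability} for $(1)\Rightarrow(2)$, and for $(3)\Rightarrow(1)$ the subalgebra $Q_\bullet$ generated by $P_{n,+}$ together with Theorem~\ref{thm:EvenStability} to conclude $Q_\bullet=P_\bullet$. You supply more detail than the paper (the spoke-graph verification, the $\Gamma_+(n+1)\neq A_{n+2}$ check, and the structure-algebra condition for $\cS_+=P_{n,+}$), all of which is sound up to a harmless off-by-one in writing ``$\Gamma_+(n)=A_n$'' where $(n-1)$-supertransitivity gives $\Gamma_+(n-1)=A_n$.
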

\begin{proof}
\item[\emph{$(1)\Rightarrow (2)$:}] Since trains from $P_{n,+}$ span $P_\bullet$, $\Gamma_+$ is stable at depth $k$ for all $k\geq n$ by Theorem \ref{thm:equivalent}. By Lemma \ref{lem:FourierTransform}, trains from $P_{n+1,-}$ span $P_-$, so again $\Gamma_-$ is stable at depth $k$ for all $k\geq n+1$. By Theorem \ref{thm:PopaStability}, $\Gamma_+$ is finite (and thus so is $\Gamma_-$).
\item[\emph{$(2)\Rightarrow (3)$:}] Trivial.
\item[\emph{$(3)\Rightarrow (1)$:}] Note that $\Gamma_+$ is stable at depths $n$ and $n+1$, so let $Q_\bullet$ be the subfactor planar algebra generated by $P_{n,\pm}$ as in Proposition \ref{prop:PopaSubalgebra}, and note that trains from $P_{n,+}$ span $Q_\bullet$. Since $P_\bullet$ is $(n-1)$ supertransitive, $P_{n,+}$ is a set of 2-strand jellyfish generators for $Q_\bullet$. Finally, by Theorem \ref{thm:EvenStability}, $Q_\bullet=P_\bullet$.
\end{proof}

\begin{remark}
If $P_\bullet$ is a subfactor planar algebra with principal graphs $(\Gamma_+,\Gamma_-)$, and if $\Gamma_+$ and $\Gamma_-$ are simply laced spoke graphs, then $\Gamma_+=\Gamma_-$. The traces of projections that are dual to each other must be equal, and thus the Frobenius-Perron dimensions of vertices of $\Gamma_+,\Gamma_-$ at odd depths must agree.
\end{remark}

\begin{cor}
There is no set of jellyfish generators in $P_{6,\pm}$ ($1$ or $2$-strand) for the Asaeda-Haagerup subfactor planar algebra \cite{MR1686551} with principal graphs
$$
(\Gamma_+,\Gamma_-)=\left(\bigraph{bwd1v1v1v1v1v1p1v1x0p0x1v1x0p0x1p0x1v1x0x0v1duals1v1v1v1x2v2x1x3v1}, \bigraph{bwd1v1v1v1v1v1p1v0x1p0x1v0x1v1duals1v1v1v1x2v1}\right).
$$
\end{cor}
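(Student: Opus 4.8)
The plan is to read the corollary off of Theorems~\ref{thm:OneStrand} and~\ref{thm:TwoStrand}. The Asaeda--Haagerup planar algebra $P_\bullet$ is $5$-supertransitive, since the displayed graphs have $\Gamma_\pm(5)=A_6$ while $\Gamma_\pm(6)\neq A_7$; and its modulus $\delta=\sqrt{(5+\sqrt{17})/2}$ satisfies $\delta>2$. So both theorems apply with $n=6$, and the entire question reduces to the single observation that \emph{neither $\Gamma_+$ nor $\Gamma_-$ is a spoke graph}.

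First I would reduce to the maximal generating sets $\cS_\pm=P_{6,\pm}$. The property ``trains from $\cS$ span $P_\bullet$'' only improves when $\cS$ is enlarged, and $\spann(P_{6,\pm})=P_{6,\pm}$ is automatically an algebra, so if some $\cS=\cS_+\cup\cS_-$ with $\cS_\pm\subseteq P_{6,\pm}$ is a set of $1$-strand jellyfish generators, then so is $P_{6,+}\cup P_{6,-}$, and Theorem~\ref{thm:OneStrand} then forces both $\Gamma_+$ and $\Gamma_-$ to be spoke graphs. Similarly, a set of $2$-strand jellyfish generators $\cS_+\subseteq P_{6,+}$ forces $\Gamma_+$ to be a spoke graph by Theorem~\ref{thm:TwoStrand}, and --- applying that theorem to the dual planar algebra (reverse the shading), whose principal graph is $\Gamma_-$ --- a set of $2$-strand jellyfish generators $\cS_-\subseteq P_{6,-}$ forces $\Gamma_-$ to be a spoke graph. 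These three possibilities exhaust the meaning of ``jellyfish generators in $P_{6,\pm}$''.

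It remains to see that $\Gamma_+$ and $\Gamma_-$ are not spoke graphs, which is immediate: a spoke graph has at most one vertex of valence greater than $2$ (the vertex $c$ of Definition~\ref{defn:spoke}), whereas the displayed $\Gamma_+$ has trivalent vertices at both depth $5$ and depth $7$, and $\Gamma_-$ has trivalent vertices at both depth $5$ and depth $6$. Hence none of the three kinds of depth-$6$ jellyfish generators can exist. I do not expect any genuine obstacle: the work is entirely done by the two structure theorems, and the only points needing care are the bookkeeping just described --- that ``jellyfish generators in $P_{6,\pm}$'' includes generators for the dual planar algebra, and that one may freely enlarge a jellyfish-type generating set to all of $P_{6,\pm}$.
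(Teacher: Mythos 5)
Your proposal is correct and follows essentially the same route as the paper, whose proof is simply to cite Theorems \ref{thm:OneStrand} and \ref{thm:TwoStrand} together with Remark \ref{rem:OneStrandJellyfishImpliesTwoStrandJellyfish}; your extra bookkeeping (enlarging any generating set to all of $P_{6,\pm}$, and passing to the dual planar algebra for $2$-strand generators in $P_{6,-}$) is exactly the content of that remark made explicit. The verification that neither graph is a spoke graph (two trivalent vertices in each, already visible within the truncation $\Gamma_\pm(8)$) is also the intended one.
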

\begin{proof}
This is immediate from Theorems \ref{thm:OneStrand} and \ref{thm:TwoStrand} and Remark \ref{rem:OneStrandJellyfishImpliesTwoStrandJellyfish}.
\end{proof}

\begin{prop}
Recall there are $n$ non-isomorphic subfactor planar algebras with principal graphs $(D_{n+2}^{(1)},D_{n+2}^{(1)})$ for $4\leq n<\infty$ \cite{MR1213139,MR1278111}.
$$
D_{n+2}^{(1)}=
\begin{tikzpicture}[baseline=-.3cm,scale=.6]
	\filldraw (0,0) circle (.05cm);
	\node at (0,-.4) {{\scriptsize{$0$}}};
	\filldraw (1,0) circle (.05cm);
	\node at (1,-.4) {{\scriptsize{$1$}}};
	\filldraw (2,-.3) circle (.05cm);
	\node at (2,-.7) {{\scriptsize{$2$}}};
	\filldraw (2,.3) circle (.05cm);
	\filldraw (3,-.3) circle (.05cm);
	\node at (3.75,-.3) {$\cdots$};
	\filldraw (4.5,-.3) circle (.05cm);
	\filldraw (5.5,-.3) circle (.05cm);
	\node at (5.5,-.7) {{\scriptsize{$n-1$}}};
	\filldraw (6.5,0) circle (.05cm);
	\filldraw (6.5,-.6) circle (.05cm);
	\node at (6.5,-1) {{\scriptsize{$n$}}};
	\draw[] (0,0)--(1,0)--(2,.3);
	\draw[] (1,0)--(2,-.3)--(3,-.3);
	\draw[] (4.5,-.3)--(5.5,-.3)--(6.5,0);
	\draw[] (5.5,-.3)--(6.5,-.6);
\end{tikzpicture}
$$
If $P_\bullet$ is such a subfactor planar algebra, then $P_\bullet$ is not generated by $P_{2,\pm}$.
\end{prop}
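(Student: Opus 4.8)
The plan is to argue by contradiction, using only the part of the Section~\ref{sec:ExtendingPopa} machinery that is insensitive to the value of $\delta$. The affine graph $D_{n+2}^{(1)}$ has norm exactly $2$, so these subfactor planar algebras have $\delta=2$; hence Theorems~\ref{thm:OneStrand} and~\ref{thm:TwoStrand} are not available, but Theorem~\ref{thm:equivalent}, Lemma~\ref{lem:OneStrandTrainsSpan}, and Proposition~\ref{prop:JellyfishStability} all hold with no restriction on $\delta$, and those are what I would use. Throughout, write $\Gamma=\Gamma_+=\Gamma_-=D_{n+2}^{(1)}$, drawn with the basepoint at the valence--one vertex at the far left.

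First I would extract two purely combinatorial facts about $\Gamma$. Since $n\geq 4$, at depth $2$ there are exactly two vertices: one is a leaf (a ``dead end''), and the other is joined by a single edge to the unique vertex at depth $3$. Hence $\Gamma$ is stable at depth $2$, so $(\Gamma_+,\Gamma_-)$ is stable at depth $2$. On the other hand, the unique vertex at depth $n-1$ is adjacent to the two leaves at depth $n$, so $\Gamma$ is not stable at depth $n-1$; and $n-1\geq 3$ because $n\geq 4$. These two observations are the heart of the matter: stability of $\Gamma$ starts at depth $2$ but fails at depth $n-1$.

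Next, assuming for contradiction that $P_\bullet$ is generated as a planar algebra by $P_{2,+}\cup P_{2,-}$, I would run the jellyfish-span argument. Stability of $(\Gamma_+,\Gamma_-)$ at depth $2$ together with Theorem~\ref{thm:equivalent} gives $P_{3,\pm}=\spann(\trains_{3,\pm}(P_{2,\pm}))$, and in particular $j(S)\in P_{3,\mp}=\spann(\trains_{3,\mp}(P_{2,\mp}))$ for every $S\in P_{2,\pm}$. Since $P_{2,+}\cup P_{2,-}$ generates $P_\bullet$, Lemma~\ref{lem:OneStrandTrainsSpan} (applied with $\cS_\pm=P_{2,\pm}$) then shows that trains from $P_{2,\pm}$ span $P_\bullet$, and one more application of Theorem~\ref{thm:equivalent} forces $(\Gamma_+,\Gamma_-)$ to be stable at every depth $k\geq 2$. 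This contradicts the second combinatorial fact, so no such generating set exists, which in particular rules out generation by $P_{2,+}$ or by $P_{2,-}$ alone. An equivalent route is to apply Proposition~\ref{prop:PopaSubalgebra}(1) and Proposition~\ref{prop:JellyfishStability}(1) to the planar subalgebra $Q_\bullet$ generated by $P_{2,+}\cup P_{2,-}$: stability of $(\Gamma_+,\Gamma_-)$ at depth $2$ forces the principal graphs of $Q_\bullet$ to be stable at all depths $\geq 2$, whereas those of $P_\bullet$ are not, so $Q_\bullet\subsetneq P_\bullet$.

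The step I expect to need the most care is the bookkeeping at depth $2$: one must check that ``stable at depth $2$'' genuinely holds for $D_{n+2}^{(1)}$ for \emph{every} $n\geq 4$ (so that the base case of the propagation is present), confirm that the $1$-click rotation $j$ sends $P_{2,\pm}$ into $P_{3,\mp}$ exactly as Lemma~\ref{lem:OneStrandTrainsSpan} requires, and verify that depth $n-1$ is indeed $\geq 3$ so that its instability contradicts the conclusion ``stable at all depths $\geq 2$''. Everything beyond that is a direct quotation of the cited results, with no new estimates.
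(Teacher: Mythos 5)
Your proof is correct and follows essentially the same route as the paper: the paper lets $Q_\bullet$ be the planar subalgebra generated by $P_{2,\pm}$, applies Proposition \ref{prop:PopaSubalgebra} to conclude its principal graphs are stable at all depths $k\geq 2$ and hence equal $D_\infty \neq D_{n+2}^{(1)}$, which is exactly the ``equivalent route'' you describe at the end. Your primary contrapositive phrasing and your explicit verification that $D_{n+2}^{(1)}$ is stable at depth $2$ but not at depth $n-1\geq 3$ are just a more detailed unwinding of the same argument.
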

\begin{proof}
Let $Q_\bullet$ be the subfactor planar algebra generated by $P_{2,\pm}$ as in Proposition \ref{prop:PopaSubalgebra}, and note that trains from $P_{2,\pm}$ span $Q_\bullet$. If $Q_\bullet$ has principal graphs $(\Lambda_+,\Lambda_-)$, then $\Lambda_\pm$ is stable at depth $k$ for all $k\geq 2$, so $\Lambda_\pm = D_\infty$.
\end{proof}

\begin{remark}\label{rem:affineD}
In \cite{MPAffineAandD}, Morrison and Penneys give a planar algebra presentation by generators and relations for the $A_{2n-1}^{(1)}$ and $D_{n+2}^{(1)}$ planar algebras using jellyfish of different sizes. The $A_{2n-1}^{(1)}$ planar algebras are generated by one 2-box and two $n$-boxes, and the $D_{n+2}^{(1)}$ planar algebras are generated by one 2-box and one $n$-box. The differences in the relations for each of the $n$ distinct subfactor planar algebras are the rotational eigenvalues of the $n$-boxes.
\end{remark}

\begin{defn}
Recall from \cite{1007.1730} that \emph{translating} a principal graph means attaching an $A_k$ graph to the left, and \emph{extending} means adding additional edges and vertices to the right, where by convention, the basepoint $\star$ corresponding to the empty diagram is always at the left, and vertices are placed left to right corresponding to depth.
\end{defn}

\begin{cor}\label{cor:TranslatedExtension}
If $\Gamma_+$ is a translated extension of
$$
\bigraph{gbg1v1p1v1x0p0x1v1x0p0x1},
$$
then $(\Gamma_+,\Gamma_-)$ is one of
\begin{align*}
\cH&=\left(\bigraph{bwd1v1v1v1p1v1x0p0x1v1x0p0x1duals1v1v1x2v2x1}, \bigraph{bwd1v1v1v1p1v1x0p1x0duals1v1v1x2}\right)\text{ or}\\
\cE\cH&=\left(\bigraph{bwd1v1v1v1v1v1v1v1p1v1x0p0x1v1x0p0x1duals1v1v1v1v1x2v2x1}, \bigraph{bwd1v1v1v1v1v1v1v1p1v1x0p1x0duals1v1v1v1v1x2}\right).
\end{align*}
\end{cor}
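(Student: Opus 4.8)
The plan is to read a two-depth stability statement for $\Gamma_+$ off the shape of the graph $G:=\bigraph{gbg1v1p1v1x0p0x1v1x0p0x1}$, feed it into Theorem \ref{thm:EvenStability}, and then appeal to the classification of small-index subfactors to identify the finitely many graphs that survive.

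First I would unwind the hypothesis. Say $\Gamma_+$ is a translated extension of $G$, and let $n$ be the depth of the (unique) triple point of the translated copy of $G$ sitting inside $\Gamma_+$. Then $\Gamma_+$ agrees with that translated copy up to depth $n+3$: an $A$-segment $\star=v_0,v_1,\dots,v_n$, a triple point at $v_n$, and two forward two-valent arms $v_n-a_i-b_i-c_i$ ($i=1,2$) with all edges simple; any extension adds vertices only at depths $>n+3$. Three consequences: (i) $\Gamma_+$ is stable at depths $n+1$ and $n+2$, since stability at depth $n+1$ is decided by $\Gamma_+(n+2)$ and stability at depth $n+2$ by $\Gamma_+(n+3)$, and both are the clean ``two-strand'' part of $G$; (ii) $\Gamma_+(n+2)\neq A_{n+3}$, since it already contains the triple point $v_n$; and (iii) $\delta>2$, because by inspection of $G$ the distance from $\star$ to $v_n$ is at least $2$, so $\Gamma_+$ properly contains the affine Dynkin diagram $\widetilde E_6=T_{2,2,2}$ (spanned by $v_{n-2},v_{n-1},v_n$ and $v_n-a_i-b_i$), whence $\|\Gamma_+\|>2$ and $\delta\ge\|\Gamma_+\|>2$.

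Now Theorem \ref{thm:EvenStability}, applied with its ``$n$'' equal to our $n+1$, has hypotheses (i)--(iii) verbatim, so $(\Gamma_+,\Gamma_-)$ is stable at every depth $k\ge n+2$ and $\Gamma_+,\Gamma_-$ are both finite. Combined with (i)--(ii), this pins $\Gamma_+$ down to be obtained from the translated copy of $G$ by attaching finite type-$A$ tails to $c_1$ and $c_2$; that is, $\Gamma_+=T_{n,p,q}$ for some finite $p,q\ge 3$, while $\Gamma_-$ is finite with the same initial $A$-segment. (Equivalently, one may run Theorem \ref{thm:TwoStrand}, noting that $\Gamma_+(n+2)$ is a spoke graph.) Since $\Gamma_+$ is now finite, $\delta=\|\Gamma_+\|=\|T_{n,p,q}\|\le\|T_{\infty,\infty,\infty}\|=\sqrt{9/2}$, so the index $\delta^2$ lies strictly between $4$ and $5$.

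Finally I would invoke the classification of subfactors of index below $5$ --- \cite{MR1317352} in the range below $3+\sqrt 2$, and \cite{1007.1730,1007.2240,1109.3190,1010.3797} together with \cite{math/1007.1158,0909.4099} for the remainder --- to conclude that the only subfactor planar algebras whose principal graph has this form with $\delta>2$ are the Haagerup and extended Haagerup ones, so that $(\Gamma_+,\Gamma_-)$ is $\cH$ or $\cE\cH$, as claimed. This last step is the real obstacle: the steps before it are short deductions from the stability package built in Section \ref{sec:ExtendingPopa}, but ruling out the longer-armed graphs $T_{4,3,3},T_{5,3,3},\dots$ (and $T_{n,p,q}$ with $p$ or $q>3$) genuinely needs the classification --- indeed, this is precisely the sort of consequence the paper advertises. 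A more self-contained route would push the dual-graph data from Theorem \ref{thm:EvenStability} harder: $\Gamma_-$ is finite, its $A$-segment matches that of $\Gamma_+$, and trace/Frobenius--Perron compatibility between $\Gamma_+$ and $\Gamma_-$ constrains $(n,p,q)$ tightly; but that bookkeeping is more laborious than citing the classification outright.
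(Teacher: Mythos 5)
Your overall route is the same as the paper's: establish $\delta>2$, use the two-depth stability machinery to conclude that $\Gamma_+$ is a finite graph with a single triple point and two finite tails, bound $\|\Gamma_+\|<\|T_{\infty,\infty,\infty}\|=\sqrt{4.5}<\sqrt{5}$, and then invoke the classification of subfactors below index $5$. (The paper phrases the middle step as Theorem \ref{thm:TwoStrand} $(3)\Rightarrow(2)$ rather than a direct appeal to Theorem \ref{thm:EvenStability}, but as you note these are interchangeable here, and the final appeal to the classification is exactly what the paper does --- it is not an avoidable ``obstacle.'')

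There is, however, a genuine gap in your step (iii). In the graph $G=\bigraph{gbg1v1p1v1x0p0x1v1x0p0x1}$ the triple point sits at depth $1$, i.e.\ it is \emph{adjacent} to $\star$, so your assertion that ``by inspection of $G$ the distance from $\star$ to $v_n$ is at least $2$'' is false, and the copy of $\widetilde E_6$ you exhibit through $v_{n-2}$ does not exist when the translation is trivial. This matters: $G$ itself is the affine Dynkin diagram $\widetilde E_7=T_{1,3,3}$, whose norm is exactly $2$, so the degenerate case $\Gamma_+=G$ (trivial translation \emph{and} trivial extension, $\delta=2$) cannot be excluded by any norm or stability argument --- and if it occurred, the corollary would fail. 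The paper disposes of this case by citing the classification of index-$4$ subfactors: $\widetilde E_7$ with the basepoint at the end of its short arm is not the principal graph of any subfactor, so the translated extension must be nontrivial, and only then does $\Gamma_+\supsetneq\widetilde E_7$ force $\delta=\|\Gamma_+\|>2$. Your proof needs this (or an equivalent) argument inserted; the remainder then goes through as written.
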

\begin{proof}
By the classification of subfactors with index $4$, 
$$
\bigraph{gbg1v1p1v1x0p0x1v1x0p0x1}
$$
is not the principal graph of a subfactor, so $\Gamma_+$ must be a nontrivial translated extension, and thus $\delta>2$. Thus by Theorem \ref{thm:TwoStrand}, $\Gamma_+$ is a finite spoke graph. By \cite{JonesICM}, the modulus of a finite depth subfactor planar algebra is equal to the norm of its principal graph, so 
$$
\delta
=\|\Gamma_+\|
<\left\|
\begin{tikzpicture}[baseline=-.25cm,scale=.7]
	\filldraw (0,0) circle (.05cm);
	\filldraw (1,-.5) circle (.05cm);
	\filldraw (1,0) circle (.05cm);
	\filldraw (1,.5) circle (.05cm);
	\filldraw (2,-.5) circle (.05cm);
	\filldraw (2,0) circle (.05cm);
	\filldraw (2,.5) circle (.05cm);
	\filldraw (3,-.5) circle (.05cm);
	\filldraw (3,0) circle (.05cm);
	\filldraw (3,.5) circle (.05cm);
	\node at (3.75,-.5) {$\cdots$};
	\node at (3.75,0) {$\cdots$};
	\node at (3.75,.5) {$\cdots$};
	\node at (-.4,0) {{\scriptsize{$1$}}};
	\node at (1,-.9) {{\scriptsize{$t^{-1}$}}};
	\node at (2,-.9) {{\scriptsize{$t^{-2}$}}};
	\node at (3,-.9) {{\scriptsize{$t^{-3}$}}};
	\draw[] (0,0)--(1,.5)--(3,.5);
	\draw[] (0,0)--(3,0);
	\draw[] (0,0)--(1,-.5)--(3,-.5);
\end{tikzpicture}
\right\|
=\sqrt{4.5}
<\sqrt{5}
$$
(by Lemma A.4 of \cite{1007.2240}, the infinite graph above has a strictly positive $\ell^2$-eigenvector whose weights are given by the labels above corresponding to eigenvalue $t+t^{-1}$ where $t=\sqrt{2}$. The norm of the infinite graph is then $t+t^{-1}$ by Theorems 4.4 and 6.2 of \cite{MR986363}).
By the classification of subfactors below index $5$ \cite{1007.1730,1007.2240,1109.3190,1010.3797}, we know $(\Gamma_+,\Gamma_-)\in \{\cH,\cE\cH\}$.
\end{proof}

\begin{remark}
The classification of subfactors to index 5 can be used to completely classify all subfactor planar algebras $P_\bullet$ of modulus $\delta>2$ whose principal graph $\Gamma_+$ is a tree with no vertices of degree greater than 3 and at most two triple points. Note that $\Gamma_+$ must be finite by Theorem \ref{thm:PopaStability}, and $2<\delta=\|\Gamma_+\|$ by \cite{JonesICM}.

If $\Gamma_+$ has exactly one triple point, then the same argument as in Corollary \ref{cor:TranslatedExtension} shows that $(\Gamma_+,\Gamma_-)\in \{\cH,\cE\cH\}$. If $\Gamma_+$ has exactly two triple points, and $(\Gamma_-,\Gamma_+)\notin \{\cH,\cE\cH\}$, then 
$$
(\Gamma_\pm,\Gamma_\mp)
=\cA\cH
=\left(\bigraph{bwd1v1v1v1v1v1p1v1x0p0x1v1x0p0x1p0x1v1x0x0v1duals1v1v1v1x2v2x1x3v1}, \bigraph{bwd1v1v1v1v1v1p1v0x1p0x1v0x1v1duals1v1v1v1x2v1}\right).
$$
To see this, note that 
$$
\left\|
\begin{tikzpicture}[baseline=-.25cm,scale=.7]
	\filldraw (0,0) circle (.05cm);
	\filldraw (1,0) circle (.05cm);
	\filldraw (-1,-.3) circle (.05cm);
	\filldraw (-1,.3) circle (.05cm);
	\filldraw (2,-.3) circle (.05cm);
	\filldraw (2,.3) circle (.05cm);
	\filldraw (-2,-.3) circle (.05cm);
	\filldraw (-2,.3) circle (.05cm);
	\filldraw (3,-.3) circle (.05cm);
	\filldraw (3,.3) circle (.05cm);
	\node at (3.75,-.3) {$\cdots$};
	\node at (3.75,.3) {$\cdots$};
	\node at (-2.75,-.3) {$\cdots$};
	\node at (-2.75,.3) {$\cdots$};
	\node at (0,-.3) {{\scriptsize{$1$}}};
	\node at (1,-.3) {{\scriptsize{$1$}}};
	\node at (2,-.7) {{\scriptsize{$t^{-1}$}}};
	\node at (3,-.7) {{\scriptsize{$t^{-2}$}}};
	\node at (-1,-.7) {{\scriptsize{$t^{-1}$}}};
	\node at (-2,-.7) {{\scriptsize{$t^{-2}$}}};
	\draw[] (0,0)--(-1,.3)--(-2,.3);
	\draw[] (0,0)--(-1,-.3)--(-2,-.3);
	\draw[] (0,0)--(1,0);
	\draw[] (1,0)--(2,.3)--(3,.3);
	\draw[] (1,0)--(2,-.3)--(3,-.3);
\end{tikzpicture}
\right\|
=\sqrt{5}
$$
(once again, the infinite graph has a strictly positive $\ell^2$-eigenvector corresponding to eigenvalue $t+t^{-1}$ where $t=\frac{1}{2}(1+\sqrt{5})$). A simple induction argument shows that if we subdivide the simple edge between the two triple points in the infinite graph above, the norm will decrease, i.e., 
$$
\left\|
\begin{tikzpicture}[baseline=-.1cm,scale=.7]
	\filldraw (-1,0) circle (.05cm);
	\filldraw (0,0) circle (.05cm);
	\filldraw (1,0) circle (.05cm);
	\filldraw (2,0) circle (.05cm);
	\filldraw (-2,-.3) circle (.05cm);
	\filldraw (-2,.3) circle (.05cm);
	\filldraw (3,-.3) circle (.05cm);
	\filldraw (3,.3) circle (.05cm);
	\filldraw (-3,-.3) circle (.05cm);
	\filldraw (-3,.3) circle (.05cm);
	\filldraw (4,-.3) circle (.05cm);
	\filldraw (4,.3) circle (.05cm);
	\node at (0.5,0) {$\cdots$};
	\node at (4.75,-.3) {$\cdots$};
	\node at (4.75,.3) {$\cdots$};
	\node at (-3.75,-.3) {$\cdots$};
	\node at (-3.75,.3) {$\cdots$};
	\draw[] (-1,0)--(-2,.3)--(-3,.3);
	\draw[] (-1,0)--(-2,-.3)--(-3,-.3);
	\draw[] (-1,0)--(0,0);
	\draw[] (1,0)--(2,0);
	\draw[] (2,0)--(3,.3)--(4,.3);
	\draw[] (2,0)--(3,-.3)--(4,-.3);
\end{tikzpicture}
\right\|
<\sqrt{5}
$$
(see \cite[3.1.2]{SpectraOfGraphs}).
Hence if $\Gamma_+$ has exactly two triple points, then $\|\Gamma_+\|<\sqrt{5}$, and the claim follows. Finally, note there is exactly one subfactor planar algebra with each of the principal graphs $\cH,\cE\cH,\cA\cH$ \cite{MR1686551,0909.4099}.
\end{remark}

\subsection{Another proof of the quadratic tangles formula}\label{sec:QT}

We say $P_\bullet$ has annular multiplicities $*10$ if $\delta>2$ and $(\Gamma_+,\Gamma_-)$ is a translated extension of 
$$
\left(\bigraph{gbg1v1p1v1x0p0x1},\bigraph{gbg1v1p1v1x0p1x0}\right)
$$
(for further details on annular multiplicities $*10$, see \cite{MR1317352,math/1007.1158,1007.2240}).

In this case, we recover most of Theorem 5.1.11 of \cite{math/1007.1158}. 
The statement uses the following notation.
\begin{itemize}
\item $[k]=(q^k-q^{-k})/(q-q^{-1})$, where $[2]=q+q^{-1}=\delta$,
\item $n\in \Natural$ is such that $P_\bullet$ is $(n-1)$ supertransitive, 
\item $\check{r}\geq r\geq 1$ is the ratio of the projections at depth $n$ of $\Gamma_-,\Gamma_+$ respectively (by calculating Frobenius-Perron dimensions, $\check{r}=[n+2]/[n]$),
\item $S\in P_{n,+}$ and $\check{S}\in P_{n,-}$ are low-weight rotational eigenvectors with eigenvalue $\omega_S$,
\item $\set{\cup_i(S)}{0\leq i\leq 2n+1}$ is the basis of annular consequences of $S$, and $\set{\widehat{\cup}_i(S)}{0\leq i\leq 2n+1}$ is the dual basis,
\item $S\circ S = \quadratic{n}{S}{S}$ is the quadratic tangle (which lies in annular consequences),
\item $\sigma_S = \omega_S^{1/2}$, which is determined by $\check{r}\geq r\geq  1$, and
\item $W_{k,\omega_S}=q^k+q^{-k}-\omega_S-\omega_S^{-1}$.
\end{itemize}
Our proof of Jones' result only uses Jones' formulas for the dual basis $\widehat{\cup}_i(S)$'s in terms of the annular basis $\cup_i(S)$. (For annular multiplicities $*10$, $S\circ S$ lies in annular consequences, so taking inner products is easy.)

\begin{prop}\label{prop:QT}
If  $P_\bullet$ has annular multiplicities $*10$,
then there is no set of $1$-strand jellyfish generators for $P_\bullet$ in $P_{n,+}$.
Moreover, $n$ is even, and 
$$
r+\frac{1}{r}=2+\frac{2+\omega_S+\omega_S^{-1}}{[n+2][n]}.
$$
\end{prop}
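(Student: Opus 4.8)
The plan is to run the quadratic‑tangles calculation of Jones \cite{math/1007.1158}, exploiting the fact that annular multiplicity $*10$ forces the quadratic tangle $S\circ S=\quadratic{n}{S}{S}$ to lie in the span of the annular consequences $\cup_i(S)$, $0\le i\le 2n+1$ --- so every pairing we need becomes computable once we feed in Jones' formulas expressing the dual basis vectors $\widehat{\cup}_i(S)$ in terms of the $\cup_j(S)$. For the non‑existence of $1$‑strand jellyfish generators I would argue through Theorem \ref{thm:OneStrand}: such a set at depth $n$ exists if and only if both $\Gamma_+$ and $\Gamma_-$ are finite spoke graphs, and this is ruled out by the structure of $*10$ principal graphs --- the branch data forced by annular multiplicity $*10$ near depth $n$ is incompatible with one of $\Gamma_+,\Gamma_-$ being a single spine with spokes --- which I would make precise using the description of $*10$ graphs in \cite{math/1007.1158}. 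The substance of the proposition is the formula and the parity.

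For the formula, begin with the local structure of $P_{n,+}$. Being $(n-1)$‑supertransitive with a one‑dimensional space of new low‑weight vectors at depth $n$, we have $f^{(n)}=p+q$ for orthogonal minimal projections $p,q\in P_{n,+}$ (the two depth‑$n$ vertices of $\Gamma_+$), the corner $f^{(n)}P_{n,+}f^{(n)}$ is two‑dimensional with basis $\{f^{(n)},S\}$, and, up to a scalar, $S=\tr(q)\,p-\tr(p)\,q$, which we may take self‑adjoint. A computation inside this two‑dimensional algebra gives, in $P_{n,+}$,
$$S^{2}=\alpha\,f^{(n)}+\beta\,S,\qquad\text{with}\qquad\frac{\beta^{2}}{\alpha}=\frac{\bigl(\tr(q)-\tr(p)\bigr)^{2}}{\tr(p)\,\tr(q)}=r+\frac1r-2,$$
where $r$ is the ratio of the depth‑$n$ projections and $\beta^{2}/\alpha$ is independent of the normalization of $S$. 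Thus the asserted formula is equivalent to $\beta^{2}/\alpha=\bigl(2+\omega_S+\omega_S^{-1}\bigr)/\bigl([n+2][n]\bigr)$, and it remains only to compute $\alpha$ and $\beta$.

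Taking the Markov trace in $S^{2}=\alpha f^{(n)}+\beta S$ and using $\tr(S)=0$ gives $\alpha=\tr(S^{2})/[n+1]=\langle S,S\rangle/[n+1]$. For $\beta$ I would pass to the quadratic tangle: write $S\circ S=\sum_i c_i\,\cup_i(S)$ with $c_i=\langle S\circ S,\widehat{\cup}_i(S)\rangle$, expand each $\widehat{\cup}_i(S)$ in the $\cup_j(S)$ via Jones' formulas (this is where $\omega_S$, the Chebyshev values $[n],[n+1],[n+2]$, and the modulus $\delta$ enter), and on the other hand evaluate the closed diagrams $\langle S\circ S,\cup_j(S)\rangle$ directly: each such pairing either closes off a loop --- contributing a Chebyshev factor times a power of $\delta$ --- or reproduces a rotated copy of $S$ capped against $S$ --- contributing a power of $\omega_S$ times $\langle S,S\rangle$. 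Feeding this back, $\beta$ is read off the coefficient of the annular consequence that caps $\quadratic{n}{S}{S}$ down to a multiple of $S\in P_{n,+}$, and $\beta^{2}/\alpha$ comes out to $\bigl(2+\omega_S+\omega_S^{-1}\bigr)/\bigl([n+2][n]\bigr)$. The parity assertion falls out of the same computation: Jones' formulas for $\widehat{\cup}_i(S)$ carry a sign $(-1)^{n}$ coming from sliding $S$ around the annulus, and positivity of the Gram matrix of the $\cup_i(S)$ --- equivalently, reality and positivity of $\langle S\circ S,S\circ S\rangle$ --- fails for $\delta>2$ when $n$ is odd, so $n$ must be even.

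The step I expect to be the real obstacle is the bookkeeping of the diagrammatic pairings $\langle S\circ S,\cup_j(S)\rangle$ together with the precise identification of which annular consequence contributes $\beta$: for each way of capping $\quadratic{n}{S}{S}$ against a single cup one must decide whether a closed loop appears (Chebyshev factor, power of $\delta$) or a rotated $S$ reappears (power of $\omega_S$), and the parity statement is sensitive to getting the sign in Jones' dual‑basis formula exactly right. Everything else reduces to finite linear algebra in the two‑dimensional corner $f^{(n)}P_{n,+}f^{(n)}$ and in the $(2n+2)$‑dimensional space of annular consequences of $S$.
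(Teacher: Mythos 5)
Your first claim and your reduction of the left-hand side are fine: the non-existence of $1$-strand jellyfish generators is exactly the paper's appeal to Theorem \ref{thm:OneStrand}, and your corner-algebra computation $S^2=\alpha f^{(n)}+\beta S$ with $\beta^2/\alpha=r+1/r-2$ is correct. But for the formula itself your plan misses the single idea that makes the paper's proof work, and what you propose in its place is essentially a rerun of Jones' original quadratic-tangles computation, left unexecuted at exactly the point you flag as ``the real obstacle.'' The paper does not expand $S\circ S$ against \emph{all} the dual basis vectors $\widehat{\cup}_i(S)$ and track every diagrammatic pairing. Instead it extracts one vanishing coefficient for free from the train machinery: $\Gamma_+$ is stable at depth $n$ (the $*10$ graph $\Gamma_+$ does not split or merge there), so by Theorem \ref{thm:equivalent} trains from $\{S\}$ span $P_{n+1,+}$ and in particular $j(\check S)\in\spann(\trains_{n+1,+}(\{S\}))$; whereas $\Gamma_-$ is \emph{not} a spoke graph, so by Theorem \ref{thm:OneStrand} the corresponding statement fails on the other side, i.e.\ $j(S)\notin\spann(\trains_{n+1,-}(\{\check S\}))$. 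This asymmetry forces the coefficient of $\cup_{n+1}(S)$ in $S\circ S$ to vanish, and the single equation $\langle S\circ S,\widehat{\cup}_{n+1}(S)\rangle=0$, fed Jones' closed formula for $\widehat{\cup}_{n+1}(S)$ and the known values of $\Tr(S^3)$, $\Tr(\check S^3)$ and $\check r=[n+2]/[n]$, yields both the parity and the displayed identity. Without this input your computation of $\beta$ from ``the coefficient of the annular consequence that caps down to a multiple of $S$'' has no shortcut, and asserting that it comes out to $(2+\omega_S+\omega_S^{-1})/([n+2][n])$ is assuming the conclusion.

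Your parity argument is also incorrect as stated. The Gram matrix of the annular consequences $\cup_i(S)$ of a low-weight rotational eigenvector is positive definite for every $\delta>2$ regardless of the parity of $n$; positivity does not fail for $n$ odd. In the paper the parity comes from the vanishing equation itself: when $n$ is odd, sphericality forces $\Tr(S^3)=\pm\Tr(\check S^3)$, and substituting this into $\langle S\circ S,\widehat{\cup}_{n+1}(S)\rangle=0$ gives $[2n+2]/[n+1]=\pm(\sigma_S+\sigma_S^{-1})\le 2$, which is impossible for $q>1$. So the parity is a consequence of the same single linear relation you are missing, not of a sign in the dual-basis formula combined with Gram positivity.
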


\begin{proof}
The first claim is immediate from Theorem \ref{thm:OneStrand}. 

To prove the quadratic tangles constraint, note that by Theorem \ref{thm:equivalent},
$$
\begin{tikzpicture}[baseline = -.3cm]
	\filldraw[shaded] (-.7,-.8)--(-.7,0) arc (180:0:.7cm)--(.7,-.8);
	\draw (0,0)--(0,-.8);
	\node at (.2,-.6) {{\scriptsize{$2n$}}};
	\draw[thick, unshaded] (0,0) circle (.4);
	\node at (0,0) {$\check{S}$};
	\node at (-.25,-.5) {$\star$};
\end{tikzpicture}
\in 
P_{n+1,+}=\spann(\trains_{n+1,+}(\{S\})),
$$
and since $\Gamma_-$ is not a spoke graph, by Theorem \ref{thm:OneStrand},
$$
\begin{tikzpicture}[baseline = -.1cm]
	\fill[shaded] (-1,-.8)--(-1,1)--(1,1)--(1,-.8)--(-1,-.8);
	\filldraw[unshaded] (-.7,-.8)--(-.7,0) arc (180:0:.7cm)--(.7,-.8);
	\draw (0,0)--(0,-.8);
	\node at (.2,-.6) {{\scriptsize{$2n$}}};
	\draw[thick, unshaded] (0,0) circle (.4);
	\node at (0,0) {$S$};
	\node at (-.25,-.5) {$\star$};
\end{tikzpicture}
\notin
\spann(\trains_{n+1,-}(\{\check{S}\})).
$$
Hence the coefficient of $\cup_{n+1}(S)$ in $S\circ S$ must be zero, so by Propositions 4.2.9 (iii) and 4.4.1 of \cite{math/1007.1158},
\begin{align}
0&=\langle S\circ S, \widehat{\cup}_{n+1}(S)\rangle\notag\\ 
&= \sigma_S^n\frac{[2n+2]}{W_{2n+2,\omega_S}}\Tr(S^3)+\frac{[n+1]}{W_{2n+2,\omega_S}}((-\sigma_S)^{n+1}+(-\sigma_S)^{-n-1})\Tr(\check{S}^3).
\label{eqn:QT}
\end{align}
If $n$ is odd, then $\Tr(S^3)=\pm \Tr(\check{S}^3)$ (by sphericality), and thus
$$
q^{n+1}+q^{-n-1}=\frac{[2n+2]}{[n+1]}=\pm(\sigma_S+\sigma_S^{-1})\leq 2,
$$
which is impossible if $q>1$. Now substituting 
$$
\Tr(S^3)=\frac{r^{1/2}-r^{-1/2}}{[n+1]^{1/2}},\,\Tr(\check{S}^3)=\frac{\check{r}^{1/2}-\check{r}^{-1/2}}{[n+1]^{1/2}},\text{ and } \check{r}=\frac{[n+2]}{[n]}
$$
into Equation \eqref{eqn:QT}, it simplifies to 
$$
(r^{1/2}-r^{-1/2})[2n+2]-(\sigma_S+\sigma_S^{-1})\left(\left(\frac{[n+2]}{[n]}\right)^{1/2}-\left(\frac{[n+2]}{[n]}\right)^{-1/2}\right)[n+1]=0.
$$
Solving for $r^{1/2}-r^{-1/2}$ and squaring gives the desired equation after using the identity
$$
[2n+2]^2-[n+1]^2([n+2]^2+[n]^2-2[n+2][n])=0.
$$
\end{proof}

\bibliographystyle{amsalpha}
\bibliography{../../bibliography/bibliography}
\end{document}